\newtheorem{theorem}{Theorem}[section]
\newtheorem{corollary}[theorem]{Corollary}
\newtheorem{lemma}[theorem]{Lemma}
\newtheorem*{conjecture*}{Conjecture}
\newtheorem*{theorem*}{Theorem}
\theoremstyle{definition}
\newtheorem*{definition}{Definition}
\newtheorem*{example}{Example}
\numberwithin{equation}{section}
\newcommand{\Aut}{\operatorname*{Aut}}
\newcommand{\Inn}{\operatorname*{Inn}}
\newcommand{\gray}[1]{\textcolor{Gray}{#1}}
\begin{document}

\title{Classification of automorphic conjugacy classes in the free group on two generators}

\author{Bobbe Cooper}
\address{
	School of Mathematics \\
	University of Minnesota \\
	Minneapolis, MN 55455, USA
}

\author{Eric Rowland}
\address{
	Laboratoire de combinatoire et d'informatique math\'ematique \\
	Universit\'e du Qu\'ebec \`a Montr\'eal \\
	Montr\'eal, QC H2X 3Y7, Canada
}
\curraddr{
	Universit\'e de Li\`ege \\
	D\'epartement de Math\'ematiques \\
	Grande Traverse 12 (B37) \\
	4000 Li\`ege, Belgique
}

\begin{abstract}
We associate a finite directed graph with each equivalence class of words in $F_2$ under $\Aut F_2$, and we completely classify these graphs, giving a structural classification of the automorphic conjugacy classes of $F_2$.
This classification refines work of Khan and proves a conjecture of Myasnikov and Shpilrain on the number of minimal words in an automorphic conjugacy class whose minimal words have length $n$, which in turn implies a sharp upper bound on the running time of Whitehead's algorithm for determining whether two words in $F_2$ are automorphic conjugates.
\end{abstract}

\maketitle
\markboth{BOBBE COOPER AND ERIC ROWLAND}{CLASSIFICATION OF AUTOMORPHIC CONJUGACY CLASSES IN $F_2$}

\section{Introduction}\label{Introduction}

We begin with a few standard definitions.
Let $F_2 = \langle a, b \rangle$ be the free group on two generators $a$ and $b$.
The length of $w \in F_2$ is denoted by $|w|$.
A word $w \in F_2$ is \emph{minimal} if $|\phi(w)| \geq |w|$ for all $\phi \in \Aut F_2$.

Two elements $w$ and $v$ in $F_2$ are \emph{automorphic conjugates} if there is an automorphism $\phi \in \Aut F_2$ such that $\phi(w) = v$.
We write $w \sim v$ if $w$ and $v$ are automorphic conjugates.
Equivalence classes under $\sim$, which we refer to as \emph{automorphic conjugacy classes}, are the main object of study in this paper.

An automorphic conjugacy class $W$ supports a natural graph structure in which the vertices are the words in $W$ and a directed edge is drawn from $w$ to $v$ for each automorphism $\phi$ such that $\phi(w) = v$.
Here we will be interested in the subgraph consisting of minimal words, say of length $n$, and in particular we will define (in Section~\ref{The graph}) a quotient $\Gamma(W)$ of this subgraph obtained by dividing by $n$ inner automorphisms and $8$ permutations.

The size of $\Gamma(W)$ has implications for the running time of a standard algorithm for determining whether two words in $F_2$ are automorphic conjugates.
To bound the time complexity of this algorithm, Myasnikov and Shpilrain~\cite{Myasnikov--Shpilrain} studied the number of minimal words in an automorphic conjugacy class $W$.
They showed that if $w \in F_2$ is a minimal word of length $n$, then the number of minimal words in its automorphic conjugacy class is bounded above by a polynomial in $n$.
Further, they conjectured that $8n^2 - 40n$ gives a sharp bound for $n \geq 9$.
In terms of $\Gamma(W)$, where we have divided by $8 n$ automorphisms, this is equivalent to the statement that $|V(\Gamma(W))| \leq n - 5$ for $n \geq 9$.
Khan~\cite{Khan} showed that this conjectured bound holds for sufficiently large classes.
His approach was to identify a number of subgraphs that $\Gamma(W)$ avoids and use these subgraphs to bound the number of vertices.

\begin{theorem*}[Khan]
If $W$ is an automorphic conjugacy class of size $|V(\Gamma(W))| \geq 4373$ whose minimal words have length $n \geq 10$, then $|V(\Gamma(W))| \leq n - 5$.
\end{theorem*}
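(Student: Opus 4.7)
The overall plan is to exploit Whitehead's theorem---which asserts that two minimal words of the same length are automorphic conjugates if and only if they are connected by a sequence of length-preserving Whitehead automorphisms---to study $\Gamma(W)$ combinatorially. After quotienting by the $8n$ inner automorphisms and permutations, the edges in $\Gamma(W)$ come from a finite, enumerable list of ``types'' of length-preserving Whitehead moves on $F_2$. The first step I would take is to catalog this list explicitly, recording for each edge type the local combinatorial effect on a cyclically reduced minimal word.

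Next, I would analyze which configurations of edges around a single vertex are compatible. If two distinct Whitehead moves both act on a minimal $w$ without increasing its length, the cyclic word $w$ must simultaneously accommodate both moves, which forces strong restrictions on the multiset of two-letter subwords of $w$. By working through pairs, triples, and short paths of move types, I would produce a list of small forbidden subgraphs in $\Gamma(W)$. The key consequence should be that the absence of these subgraphs forces bounded vertex degree, and more importantly forces any sufficiently large $\Gamma(W)$ to have a very restricted global shape (essentially path-like or close to a tree rooted at some canonical vertex).

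With the forbidden-subgraph analysis in hand, the final ingredient is a counting argument in which each Whitehead move spends some portion of a combinatorial budget of size roughly $n$ (the number of letter-occurrences in a minimal $w$). Combining the budget constraint with the sparsity imposed by the forbidden subgraphs should bound $|V(\Gamma(W))|$ by $n-5$, provided the class is large enough that the asymptotic structure dominates. The threshold $4373$ would emerge as the explicit size beyond which the forbidden-subgraph conclusions become strong enough to close the estimate, rather than from any single clever inequality.

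The main obstacle, I anticipate, is the second step: enumerating the forbidden subgraphs and verifying the structural consequence. This is not one insight but a careful case analysis that demands bookkeeping of how Whitehead moves compose and how minimality fails or persists under these compositions. Handling \emph{generic} vertices is routine, but the exceptional vertices (those fixed by some permutation symmetry, or those whose $w$ has an unusually structured letter frequency profile) introduce corner cases that must be ruled out one by one, and it is almost certainly these corner cases that prevent the argument from extending to classes below size $4373$ without separate treatment.
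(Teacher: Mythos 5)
This statement is Khan's theorem, which the paper cites from \cite{Khan} but does not prove; the paper only summarizes Khan's method in one sentence (``identify a number of subgraphs that $\Gamma(W)$ avoids and use these subgraphs to bound the number of vertices'') and then establishes a strictly stronger result, Theorem~\ref{automorphic conjugacy class size}, by a different route. Your proposal is essentially a restatement of Khan's strategy at the level of a research plan rather than a proof: every substantive step is deferred (``I would catalog,'' ``should force,'' ``would emerge''). Concretely, you never exhibit a single forbidden subgraph, never show why their absence forces a path-like or tree-like global shape, never specify the ``combinatorial budget'' inequality, and offer no derivation of the threshold $4373$ --- which in Khan's work is the output of an extensive explicit case analysis, not something that ``emerges'' from an asymptotic argument. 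As written, the proposal contains no verifiable mathematical content beyond the (correct) observation that level Whitehead moves impose constraints on two-letter subword counts, which is Lemma~\ref{level automorphism} territory. So there is a genuine gap: the entire middle of the argument is missing.

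It is worth contrasting this with what the paper actually does, since the comparison is instructive. Rather than excluding subgraphs, the paper classifies $\Gamma(W)$ completely: Lemma~\ref{level images} pins down exactly which principal automorphisms can be simultaneously level on a word (in terms of whether $w$ is a root word or alternating), Corollary~\ref{outdegrees} then bounds outdegrees, and the quantities $m_x(w)$, $m_{\overline{x}}(w)$ convert the non-root case into an explicit path whose length is controlled by the longest power $x^i$ occurring in a minimal word. Root classes are handled separately and shown to have at most $5$ vertices. This yields $|V(\Gamma(W))| \leq n-5$ for \emph{all} classes with $n \geq 9$, with no size threshold, and the only case analysis required is the short finite check of subwords $u$ with $|u| \leq 3$ in the proof of Theorem~\ref{automorphic conjugacy class size}. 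If you want to pursue your outline, you would in effect be reconstructing Khan's 80-page argument; the paper's machinery gets the stronger conclusion far more cheaply.
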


In this paper we take a direct approach to analyzing the structure of $\Gamma(W)$.
We are able to recast Khan's results with shorter proofs and additional information sufficient to prove the conjecture of Myasnikov and Shpilrain.

\begin{theorem}\label{automorphic conjugacy class size}
If $W$ is an automorphic conjugacy class whose minimal words have length $n \geq 9$, then $|V(\Gamma(W))| \leq n - 5$.
\end{theorem}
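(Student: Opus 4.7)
The plan is to obtain Theorem~\ref{automorphic conjugacy class size} by a direct structural classification of $\Gamma(W)$, rather than by enumerating forbidden subgraphs as in Khan's approach. Since vertices of $\Gamma(W)$ are minimal words of length $n$ modulo the $8$-element group of signed permutations of $\{a^{\pm 1},b^{\pm 1}\}$ together with cyclic conjugation, and since Whitehead's theorem implies that all edges come from length-preserving Whitehead automorphisms acting on minimal words, the first step is to tabulate the short list of length-preserving Whitehead automorphisms up to the $8$ permutations. This should reduce the edge types in $\Gamma(W)$ to a handful of explicit local rewriting rules on the two-letter patterns $a^{\pm 1}b^{\pm 1}$ appearing in the cyclic word.

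Next I would attach to each minimal word $w$ a low-dimensional combinatorial invariant, namely the vector of multiplicities of each of the eight cyclic two-letter patterns $a^{\varepsilon}b^{\delta}$ and $b^{\delta}a^{\varepsilon}$ ($\varepsilon,\delta\in\{\pm 1\}$). The total of these counts equals $n$, and being minimal forces several further linear relations (bounding how unbalanced the counts can be). Hence the possible invariant vectors lie in a small-dimensional affine lattice slice whose size is linear in $n$. Each Whitehead move changes the invariant in a prescribed way, so $\Gamma(W)$ projects into a graph on this lattice slice; a vertex of $\Gamma(W)$ is essentially determined by its invariant (after quotienting by the $8$ permutations).

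The heart of the argument, and the main obstacle, will be showing that the fibers of this projection are small and that the image is close to a path: i.e., that two minimal words with the same invariant are in fact equivalent modulo the $8n$ symmetries, and that from a given vertex, the outgoing Whitehead moves either coincide (up to permutation) or lie on a narrow ``trunk'' in the lattice. This demands a careful analysis of how a Whitehead automorphism can act nontrivially on two different positions of a cyclic word, and it is here that the hypothesis $n \geq 9$ is used: for small $n$ the lattice is too cramped for the argument to give room, while for $n\geq 9$ one gets enough positions to separate the cases cleanly. Handling a finite list of ``exceptional'' starting words by hand (for the small-$n$ boundary of this bound) completes the setup.

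Once the structural classification is in place, counting is routine: since the path/trunk has length linear in $n$ with explicit slope, and since the structural result rules out large branching, the number of equivalence classes of minimal words is bounded by a precise linear function of $n$. The inequality $|V(\Gamma(W))|\leq n-5$ then follows by verifying that the constant in that linear bound matches, sharpened against the extremal families of words that realize equality (these are the families Myasnikov and Shpilrain identified when formulating their conjecture, and they serve as a sanity check that the bound cannot be improved).
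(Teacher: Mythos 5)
Your proposal is an outline rather than a proof: the steps you flag as ``the heart of the argument'' are exactly the ones you do not carry out, and as stated some of them are not true. The most serious problem is the claim that a vertex of $\Gamma(W)$ is ``essentially determined'' by its vector of two-letter subword counts, i.e.\ that two minimal words with the same invariant are equivalent modulo the $8n$ symmetries. Over all minimal words of length $n$ this is false for counting reasons (there are exponentially many minimal words but only polynomially many count vectors), and even restricted to a single class you give no argument; nothing in your setup rules out large fibers, so no bound on $|V(\Gamma(W))|$ follows. Likewise, the assertion that the outgoing Whitehead moves ``either coincide or lie on a narrow trunk'' is precisely the structural classification that has to be proved, and your explanation of where $n\geq 9$ enters (``the lattice is too cramped for small $n$'') does not correspond to any actual step.

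What is missing, concretely, is the case analysis that actually produces the constant. First, classes containing a root word (those with $|(ab)_w-(a\overline{b})_w|=(aa)_w=(bb)_w$) must be treated separately: they have $|V(\Gamma(W))|\leq 5$, but their minimal length is divisible by $4$, so $n\geq 12$ and the bound is automatic; your invariant-lattice picture does not isolate these classes. Second, for non-root classes one needs a quantitative handle on path length: the paper introduces $m_x(w)=\min\{i\geq 0:(yx^iy)_w\geq 1\}$ and shows that $\Gamma(W)$ is a path with at most $m_x(w)+1+m_{\overline{x}}(w)\leq 1+\max\{i: x^i \text{ occurs in a minimal word of } W\}$ vertices. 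The constant $-5$ then comes from an explicit finite check: if $x^i$ occurs with $i>n-6$, the word has the form $a^{n-k}bub$ with $|u|\leq 3$, and each such $u$ is examined by hand. Your plan has no mechanism for pinning down this constant; ``sharpening against the extremal families'' presupposes that you already know which classes are extremal, which is the content of the theorem.
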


Myasnikov and Shpilrain~\cite{Myasnikov--Shpilrain} perceived the possibility of a sharp polynomial bound as quite surprising.
We show in this paper that the structure of automorphic conjugacy classes is quite restricted, perhaps much more so than previously suspected, which accounts for a simple bound.

Our work builds on that of a previous paper~\cite{Cooper--Rowland} in which we identified certain words in $F_2$ as \emph{root words}.
We define these words below, following Theorem~\ref{minimality}.
The property of being a root word is respected by automorphic conjugacy (Theorem~\ref{root classes} below), so each automorphic conjugacy class $W$ can be said to either be a \emph{root class} or a \emph{non-root class}.
For graphs of sufficiently large automorphic conjugacy classes, Khan~\cite{Khan} also identified a dichotomy --- either the number of vertices is bounded by some absolute constant or the graph has at most $n-5$ vertices and simple edge structure.
We show in this paper that the former correspond to root classes and the latter to non-root classes.

Both Khan's approach and ours are founded on a theorem of Whitehead~\cite{Whitehead 1936a, Whitehead 1936b} which provides a finite set of generators for $\Aut F_2$.
Before recalling this theorem we introduce a bit of notation.
Let $L_2 = \{a, b, a^{-1}, b^{-1}\}$.
For $x \in L_2$, denote $\overline{x} = x^{-1}$.
We identify each element $w \in F_2$ with its word on the alphabet $L_2$ in which no pair of adjacent letters are inverses of each other.

A \emph{Type~I automorphism} or a \emph{permutation} is an automorphism which permutes $L_2$.
There are $8$ permutations.

Type~II automorphisms are defined as follows.
Let $x \in L_2$ and $A \subset L_2 \setminus \{x, \overline{x}\}$.
Define a map $\phi : L_2 \to F_2$ by
\[
	\phi(y) = \overline{x}^{\beta(\overline{y} \in A)} \, y \, x^{\beta(y \in A)},
\]
where $\beta(\textsf{true}) = 1$ and $\beta(\textsf{false}) = 0$.
Since $\phi(y)^{-1} = \phi(\overline{y})$ for all $y \in L_2$, this map extends to an automorphism.
We write $\phi = (A,x)$ and call $\phi$ a \emph{Type~II automorphism}.
For example, the automorphism $\phi = (\{a\},b)$ maps $a \mapsto ab$ and $\overline{a} \mapsto \overline{b}\overline{a}$ and leaves $b,\overline{b}$ fixed.
This notation for Type~II automorphisms was introduced by Higgins and Lyndon~\cite{Higgins--Lyndon}; see also the standard book of Lyndon and Schupp~\cite[page~31]{Lyndon--Schupp}.

\begin{theorem*}[Whitehead]
If $w, v \in F_2$ such that $w \sim v$ and $v$ is minimal, then there exists a sequence $\phi_1, \phi_2, \dots, \phi_m$ of Type~I and Type~II automorphisms such that
\begin{itemize}

\item
$\phi_m \cdots \phi_2 \phi_1(w) = v$ and

\item
for $0 \leq k \leq m - 1$, $|\phi_{k+1} \phi_k \cdots \phi_2 \phi_1(w)| \leq |\phi_k \cdots \phi_2 \phi_1(w)|$, with strict inequality unless $\phi_k \cdots \phi_2 \phi_1(w)$ is minimal.

\end{itemize}
\end{theorem*}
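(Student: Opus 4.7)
My plan is to follow Whitehead's original argument, which rests on three ingredients: a generation lemma, a descending lemma, and a peak reduction lemma. I would prove the theorem by strong induction on $|w|$.

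First, the generation lemma: Type~I and Type~II automorphisms together generate $\Aut F_2$. This can be shown by expressing each Nielsen generator of $\Aut F_2$ --- the swap $a \leftrightarrow b$, the inversion $a \mapsto a^{-1}$, and the transvection $a \mapsto ab$ --- as an explicit composition of permutations and Type~II automorphisms. Second, the descending lemma: if $w$ is not minimal then some single Whitehead automorphism strictly decreases $|w|$. This is proved by considering the Whitehead graph of $w$ on $L_2$ (vertices $L_2$, one edge for each adjacency in the cyclic word) and reading off a length-reducing automorphism $(A, x)$ from a suitable cut of that graph; the combinatorics is explicit in $F_2$ and not lengthy.

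Third, the inductive step. If $w$ is not minimal, choose $\phi_1$ from the descending lemma so that $|\phi_1(w)| < |w|$; by strong induction on $|w|$ there is a sequence of Whitehead automorphisms satisfying both bullets from $\phi_1(w)$ to $v$, and prepending $\phi_1$ finishes the case (the first step is strict since $w$ is not minimal). If instead $w$ is minimal, then $|w| = |v|$, and I need a sequence connecting them through words of length exactly $|w|$. For this I apply the peak reduction lemma to any sequence of Whitehead automorphisms connecting $w$ and $v$ (which exists by the generation lemma): whenever a subsequence $u_{k-1} \to u_k \to u_{k+1}$ has $|u_k| > \max(|u_{k-1}|, |u_{k+1}|)$, rewrite the two steps as a longer sequence of Whitehead automorphisms from $u_{k-1}$ to $u_{k+1}$ whose intermediate lengths are all strictly less than $|u_k|$. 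The multiset of lengths decreases lexicographically under such a surgery, so iteration terminates with a sequence whose maximum length is attained at an endpoint; since both endpoints have length $|w|$, every intermediate word does too and is therefore minimal, giving the second bullet automatically in this case.

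The main obstacle is the peak reduction lemma. All the combinatorial content of Whitehead's theorem is concentrated there: one must analyze, letter by letter, how consecutive Whitehead automorphisms $\phi_k$ and $\phi_{k+1}$ produce and destroy cancellations in $\phi_{k+1}\phi_k(u_{k-1})$, and use that analysis to construct a replacement path through strictly shorter intermediate words. Restricting from $\Aut F_n$ to $\Aut F_2$ simplifies the bookkeeping on the small alphabet $L_2$, but the case analysis remains the longest step of the proof.
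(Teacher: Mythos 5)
The paper does not prove this statement: it is quoted as a classical theorem of Whitehead and supported only by citations to Whitehead's two 1936 papers (and implicitly to Higgins--Lyndon and Lyndon--Schupp), so there is no in-paper argument to compare yours against. Your outline is the standard one --- generation of $\Aut F_2$ by Type~I and Type~II automorphisms, the descending lemma for non-minimal words via the Whitehead graph, and peak reduction to handle the level case --- and the overall induction is assembled correctly. In particular your handling of the minimal case is right: after all peaks are removed from a path joining two minimal words of the same orbit, the length profile has no interior strict local maximum, and since every word in the orbit has length at least the common minimal length, all intermediate words must have exactly that length and hence be minimal, which yields the second bullet.

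The gap is that the proposal is a plan rather than a proof. You correctly identify that essentially all of the content of Whitehead's theorem is concentrated in the peak reduction lemma, but you do not prove it; you also only gesture at the descending lemma (``reading off a length-reducing automorphism from a suitable cut'' of the Whitehead graph) without carrying out the cut argument. Neither step is routine: peak reduction requires a genuine case analysis of how two consecutive Whitehead automorphisms $(A,x)$ and $(B,y)$ interact, organized by whether $\{x,\overline{x}\}$ meets $\{y,\overline{y}\}$ and by the four intersections $A \cap B$, $A \cap B^{c}$, etc., and the replacement paths must be exhibited explicitly in each case. Restricting to rank $2$ does shrink the case list, but until those cases are written out the theorem has not been proved. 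As a self-contained verification of the statement, the submission therefore falls short; as a roadmap it is accurate and consistent with the classical treatment the paper is citing.
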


To determine whether a word $w$ is minimal, by Whitehead's theorem it suffices to apply each Type~II automorphism to $w$.
Then $w$ is minimal if and only if $|\phi(w)| \geq |w|$ for each Type~II automorphism $\phi$.

In fact we do not need to check all Type~II automorphisms to determine minimality.
For example, $(\{\}, x)$ is the identity automorphism, so we may require that no automorphism $\phi_i$ in Whitehead's theorem is $(\{\}, x)$.

Additionally, notice that $(\{y,\overline{y}\},x)$ is an inner automorphism, since it conjugates $y$ by $x$ and also (trivially) conjugates $x$ by $x$.
We view inner automorphisms as ``cosmetic'' automorphisms, and we will usually dispense with them by dividing $\Aut F_2$ by its normal subgroup $\Inn F_2$.
For clarity, however, our notation will indicate when we have omitted an inner automorphism.
We write $w \equiv v$ if $\phi(w) = v$ for some inner automorphism $\phi$.
Equivalence classes under $\equiv$ are called \emph{cyclic words}.
Let $C_2$ be the set of words $w = x_1 \cdots x_n \in F_2$ such that $x_n \neq \overline{x_1}$.
Words in $C_2$ are representatives of cyclic words.
For the remainder of the paper, all words are elements of $C_2$.
Since $F_2 \setminus C_2$ consists entirely of words which are not minimal, we do not lose any structural information regarding minimal words by moving from $F_2$ to $C_2$.

Since an inner automorphism does not decrease the length of any word in $C_2$, by Whitehead's theorem we need not consider them when determining the minimality of a word in $C_2$.
Therefore the primary automorphisms of interest are automorphisms $\phi = (A, x)$ where $|A| = 1$.
We call such an automorphism a \emph{one-letter automorphism}.
For $y \notin \{x, \overline{x}\}$, the one-letter automorphism $(\{y\},x)$ maps $x \mapsto x$, $\overline{x} \mapsto \overline{x}$, $y \mapsto yx$, and $\overline{y} \mapsto \overline{x}\overline{y}$.
The inverse of $\phi = (\{y\}, x)$ is the one-letter automorphism $\phi^{-1} = (\{y\}, \overline{x})$.

One-letter automorphisms do not commute with permutations in general, but we have the following identity, which we will use a number of times.

\begin{lemma}\label{permutation}
Let $y \notin \{x, \overline{x}\}$, let $\phi = (\{y\}, x)$ be a one-letter automorphism, and let $\pi \in \Aut F_2$ be a permutation.
Then $\pi \phi = (\{\pi(y)\}, \pi(x)) \pi$.
\end{lemma}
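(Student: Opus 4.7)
The plan is straightforward: both sides of the claimed identity are automorphisms of $F_2$, so it suffices to check they agree on each element of the generating set $L_2$. First I would note that since $y \notin \{x, \overline{x}\}$ and $L_2$ has four elements, we have $L_2 = \{x, \overline{x}, y, \overline{y}\}$, giving exactly four cases to verify.

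For each case I would apply the defining formula for a one-letter automorphism, namely that $(\{z\}, w)$ fixes both $w$ and $\overline{w}$, sends $z \mapsto zw$, and sends $\overline{z} \mapsto \overline{w}\,\overline{z}$. For example, on $y$ the left side yields $\pi\phi(y) = \pi(yx) = \pi(y)\pi(x)$, while the right side yields $(\{\pi(y)\}, \pi(x))(\pi(y)) = \pi(y)\pi(x)$, so the two agree. The remaining three cases (on $\overline{y}$, $x$, and $\overline{x}$) are entirely analogous: on $\overline{y}$ both sides produce $\overline{\pi(x)}\,\overline{\pi(y)}$, and on $x$ and $\overline{x}$ both sides simply act as $\pi$.

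Before carrying out these checks, I would point out the small bookkeeping fact that $(\{\pi(y)\}, \pi(x))$ is a well-defined one-letter automorphism: since $\pi$ permutes $L_2$ and $y \notin \{x, \overline{x}\}$, we have $\pi(y) \notin \{\pi(x), \overline{\pi(x)}\}$, so the hypothesis needed for the construction $(A, x)$ with $A \subset L_2 \setminus \{x, \overline{x}\}$ is satisfied.

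There is no real obstacle here; the lemma is a naturality statement asserting that conjugating a one-letter automorphism by a permutation simply relabels the defining data $(\{y\}, x)$ in the obvious way. The main value of the statement is its later use, so the proof plan is essentially just the four-line verification sketched above.
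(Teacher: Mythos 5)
Your proposal is correct and matches the paper's own proof, which likewise verifies that both sides agree on generators (the paper checks $x \mapsto \pi(x)$ and $y \mapsto \pi(y)\pi(x)$, which suffices since $x$ and $y$ generate $F_2$). Your additional remark that $(\{\pi(y)\},\pi(x))$ is a well-defined one-letter automorphism is a harmless bit of extra care.
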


\begin{proof}
One checks that both sides map $x \mapsto \pi(x)$ and $y \mapsto \pi(y) \pi(x)$.
\end{proof}

We mention that a consequence of Lemma~\ref{permutation} is that one can pull any permutations in the product $\phi_m \cdots \phi_2 \phi_1$ to the left.
Therefore in Whitehead's theorem one may assume that $\phi_1, \phi_2, \dots, \phi_{m-1}$ are Type~II automorphisms and that $\phi_m$ is a permutation.

There are $8$ one-letter automorphisms; they are given by $(\{y\}, x)$ as $x$ and $y$ run over $L_2$ subject to $y \notin \{x, \overline{x}\}$.
Each one-letter automorphism $(\{y\}, x)$ can be written as the product
\begin{equation}\label{inner automorphism}
	(\{y\},x) = (\{y,\overline{y}\},x) (\{\overline{y}\},\overline{x})
\end{equation}
of an inner automorphism and another one-letter automorphism.
That is, we have $(\{y\},x)(w) \equiv (\{\overline{y}\},\overline{x})(w)$ for all $w \in C_2$.
Therefore, there are only four distinct one-letter automorphisms modulo $\Inn F_2$.
The four \emph{principal} automorphisms are $(\{a\},b)$, $(\{a\},\overline{b})$, $(\{b\},a)$, and $(\{b\},\overline{a})$; they are distinct modulo $\Inn F_2$.
We have shown the following corollary of Whitehead's theorem.

\begin{corollary}\label{decrease the length}
Let $w \in C_2$.
Then $w$ is minimal if and only if none of the principal automorphisms decrease the length of $w$.
\end{corollary}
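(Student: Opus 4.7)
The plan is to prove the forward direction directly from the definition of minimality and the converse by contraposition, using Whitehead's theorem together with the enumeration of Type~II automorphisms modulo $\Inn F_2$ already carried out in this section. If $w$ is minimal, then $|\phi(w)| \geq |w|$ for every $\phi \in \Aut F_2$; applying this to each of the four principal automorphisms gives the easy direction immediately.

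For the converse I would argue contrapositively. Suppose $w \in C_2$ is not minimal, and let $v \sim w$ be minimal, so that $|v| < |w|$. Whitehead's theorem then produces a sequence $\phi_1,\dots,\phi_m$ of Type~I and Type~II automorphisms carrying $w$ to $v$ along a non-increasing chain of lengths, with strict decrease whenever the current word is not minimal; in particular $|\phi_1(w)| < |w|$. Because permutations preserve length, $\phi_1$ must be Type~II, say $\phi_1 = (A,x)$ with $A \subseteq \{y,\overline{y}\}$, where $y$ is the unique element of $L_2 \setminus \{x,\overline{x}\}$ up to inversion.

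Next I would examine the four possible choices of $A$. The values $A = \emptyset$ (the identity) and $A = \{y,\overline{y}\}$ (conjugation by $x$, an inner automorphism) both preserve the length of any word in $C_2$, so they are inconsistent with $|\phi_1(w)| < |w|$. Hence $A = \{y\}$ or $A = \{\overline{y}\}$ and $\phi_1$ is a one-letter automorphism. The identity (\ref{inner automorphism}) then shows that $\phi_1$ agrees modulo $\Inn F_2$ with one of the four principal automorphisms, and because inner automorphisms preserve the length of a cyclic word, that principal automorphism also strictly shortens $w$. This contradicts the hypothesis and closes the argument.

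Since every ingredient has been set up explicitly in the preceding pages, I do not expect any real obstacle. The only step requiring attention is the case analysis on $A$, which is short but essential to ensure that every length-decreasing Type~II automorphism is accounted for by the four principal automorphisms up to $\Inn F_2$.
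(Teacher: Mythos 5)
Your argument is correct and follows essentially the same route as the paper, which derives this corollary from the surrounding discussion: Whitehead's theorem, the elimination of $(\{\},x)$ as the identity and $(\{y,\overline{y}\},x)$ as an inner automorphism (which cannot decrease the length of a word in $C_2$), and the reduction of the eight one-letter automorphisms to the four principal ones via Equation~\eqref{inner automorphism}. Your explicit case analysis on $A$ and your care in passing to cyclic length when replacing $(\{\overline{y}\},\overline{x})$ by its principal representative are exactly the steps the paper leaves implicit.
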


\begin{example}
Let $w = aa$.
Since the lengths of $(\{a\}, b)(w) = abab$, $(\{a\}, \overline{b})(w) = a\overline{b}a\overline{b}$, $(\{b\}, a)(w) = aa$, and $(\{b\}, \overline{a})(w) = aa$ are at least $2$, $w$ is minimal.
\end{example}

By counting two-letter subwords of $w$ we can determine whether the length of $(\{y\}, x)(w)$ is greater than, less than, or equal to $|w|$.
Hence the minimality of $w$ can be expressed in terms of these subword counts; this is the content of Theorem~\ref{minimality} below.
Our notation for counting subwords is as follows.
If $w = x_1 \cdots x_n$ and $u$ are nonempty words in $C_2$ such that $k = |u| \leq |w| = n$, let $(u)_w$ denote the total number of (possibly overlapping) occurrences of the (contiguous) subwords $u$ and $u^{-1}$ in $x_1 \cdots x_n x_1 \cdots x_{k-1}$.
If $|u| > |w|$, let $(u)_w = 0$.
Essentially we are considering $w$ to be a cyclic word; if $w \equiv w'$ then $(u)_w = (u)_{w'}$.

\begin{example}
Let $w = aa\overline{bb}\overline{a}ba\overline{b}a$; the length-$2$ subword counts are $(aa)_w = 2$, $(bb)_w = 1$, $(ab)_w = 1 = (ba)_w$, and $(a\overline{b})_w = 2 = (\overline{b}a)_w$.
\end{example}

One can show that, in general, $(xy)_w = (yx)_w$ for $w \in C_2$ and $x,y \in L_2$.

In the remainder of this section we give some facts from our previous paper~\cite{Cooper--Rowland} that we will use.
We include a proof of the first lemma to indicate the flavor of the proofs.

\begin{lemma}\label{cyclic word}
Let $w \in C_2$, and let $\phi = (\{y\},x)$ with $y \notin \{x, \overline{x}\}$.
Then
\begin{align*}
	(yy)_{\phi(w)} &= (y\overline{x}y)_w, \\
	(xx)_{\phi(w)} &= (yxy)_w + (yxx)_w + (xxy)_w + (xxx)_w.
\end{align*}
\end{lemma}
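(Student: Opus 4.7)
The plan is to track how each letter of $w$ contributes to the freely reduced form of $\phi(w)$. Concretely, $\phi(x) = x$, $\phi(\overline{x}) = \overline{x}$, $\phi(y) = yx$, and $\phi(\overline{y}) = \overline{x}\overline{y}$, so the last letter of every image $\phi(x_i)$ lies in $\{x, \overline{x}, \overline{y}\}$ and the first letter lies in $\{x, \overline{x}, y\}$. Combined with reducedness of $w$, this forces a boundary cancellation in $\phi(x_i)\phi(x_{i+1})$ exactly when $(x_i, x_{i+1}) \in \{(x, \overline{y}), (y, \overline{x})\}$; in each such case a single pair of letters vanishes, and no further reduction cascades, since no image ends in $y$ or starts in $\overline{y}$. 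Every letter of the freely reduced $\phi(w)$ therefore corresponds to a well-defined source letter of $w$.

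For the first identity, the only sources for $y$ and $\overline{y}$ in $\phi(w)$ are the $y$'s and $\overline{y}$'s of $w$, and these never cancel. A $yy$ in $\phi(w)$ thus arises from two $y$'s in $w$ at positions $i < j$ whose intervening segment $x \cdot \phi(x_{i+1}) \cdots \phi(x_{j-1})$ reduces to the identity in $F_2$. Applying $\phi^{-1} = (\{y\},\overline{x})$ turns this into $x_{i+1} \cdots x_{j-1} = \overline{x}$, so $j = i+2$ and $w$ contains $y\overline{x}y$; the symmetric argument links $\overline{y}\overline{y}$ in $\phi(w)$ with $\overline{y}x\overline{y}$ in $w$, giving $(yy)_{\phi(w)} = (y\overline{x}y)_w$.

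For the second identity, each surviving $x$ in $\phi(w)$ comes either from $\phi(x)$ at a position $i$ with $x_i = x$, or from the trailing $x$ of $\phi(y) = yx$ at a position with $x_i = y$; in both cases survival requires $x_{i+1} \in \{x, y\}$. The same $\phi^{-1}$-equivariance argument forces two adjacent $x$'s in $\phi(w)$ to come from consecutive positions $i, i+1$ of $w$; moreover the second $x$ must be $\phi(x_{i+1}) = x$, since otherwise $x_{i+1} = y$ and the two $x$'s would be separated by the initial $y$ of $\phi(y)$. The constraints $x_i \in \{x, y\}$, $x_{i+1} = x$, $x_{i+2} \in \{x, y\}$ then carve out exactly the patterns $xxx$, $xxy$, $yxx$, $yxy$ in $w$, and inverse-symmetry supplies the four inverse patterns contributing to $\overline{x}\overline{x}$, matching the claimed right-hand side. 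The main bookkeeping obstacle is checking that each of these patterns really produces an $xx$ undestroyed by its context and that no exotic adjacency can occur; both reduce to the ``single cancellation, no cascading'' observation from the first paragraph, which pins down a unique source in $w$ for every surviving letter of $\phi(w)$.
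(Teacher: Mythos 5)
Your proof is correct and takes essentially the same approach as the paper's: both track the letter images $x \mapsto x$, $y \mapsto yx$, $\overline{y} \mapsto \overline{x}\overline{y}$, observe that the only cancellations are single non-cascading boundary cancellations at $x\overline{y}$ and $y\overline{x}$, and read off which three-letter patterns of $w$ produce $yy$ or $xx$ in $\phi(w)$. The paper compresses this into two sentences; your version just makes the source-letter correspondence and the adjacency bookkeeping explicit.
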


\begin{proof}
The only way that $yy$ can occur in $\phi(w)$ is as the image of $y \overline{x} y$ in $w$.
Similarly, $\overline{y} \overline{y}$ occurs in $\phi(w)$ only where $\overline{y} x \overline{y}$ occurs in $w$; this yields the first equality.
The second equality follows from the observation that $xx$ is introduced in $\phi(w)$ where $yxy$ and $yxx$ occur in $w$, and $xx$ in $w$ is preserved under $\phi$ except when followed by $\overline{y}$; similarly for its inverse $\overline{x} \overline{x}$.
\end{proof}

An automorphism $\phi \in \Aut F_2$ is \emph{level} on $w \in C_2$ if $|w| = |v|$ for some $v \in C_2$ such that $v \equiv \phi(w)$.
In other words, $\phi$ is level on $w$ if the lengths of $w$ and $\phi(w)$ as cyclic words are equal.
For example, $(\{b\}, \overline{a})$ is level on $aba\overline{b}$ but is not level on $abab$.

The following lemma is a rephrasing of the statement that a one-letter automorphism is level on $w$ precisely when the number of (cyclic) letter cancellations it causes is equal to the number of additions.
(We must exclude words of length $1$; since cyclically consecutive $a$s in $w = a$ are not actually distinct, there is an addition under $(\{a\}, b)$ that is not captured by counting occurrences of $aa$.)

\begin{lemma}\label{level automorphism}
Let $w \in C_2$ such that $|w| \geq 2$, and let $y \notin \{x, \overline{x}\}$.
Then the automorphism $(\{y\},x)$ is level on $w$ if and only if $(y\overline{x})_w = (yx)_w + (yy)_w$.
\end{lemma}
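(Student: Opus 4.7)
The plan is to compute the cyclic length of $\phi(w)$ directly and then translate the resulting condition into the stated form.

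Under $\phi$, each occurrence of $y$ is replaced by $yx$ and each $\overline{y}$ by $\overline{x}\overline{y}$, while $x$ and $\overline{x}$ are fixed, so applying $\phi$ letter-by-letter to $w$ produces a word of length $|w| + (y)_w$ before free reduction. Any reductions must occur at junctions between the images of two cyclically adjacent letters of $w$. A short case analysis over the four possibilities for the earlier letter (using the reducedness of $w$ to exclude the pairs $y\overline{y}$, $\overline{y}y$, $x\overline{x}$, $\overline{x}x$ inside $w$) shows that cancellation happens precisely at the $2$-subwords $y\overline{x}$ and $x\overline{y}$ of $w$, each removing exactly one $x\overline{x}$ pair; moreover the surviving letter ($y$ or $\overline{y}$) of such a junction cannot cancel further, since its only possible inverse-neighbor would again demand a forbidden $2$-subword in $w$. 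Because $(y\overline{x})_w$ jointly counts $y\overline{x}$ together with its inverse $x\overline{y}$, the cyclic length of $\phi(w)$ is
\[
	|\phi(w)| = |w| + (y)_w - 2(y\overline{x})_w,
\]
so $\phi$ is level on $w$ if and only if $(y)_w = 2(y\overline{x})_w$.

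To convert this into the form stated in the lemma, I will use the identity
\[
	(y)_w = (yx)_w + (y\overline{x})_w + (yy)_w.
\]
Expanding each count in terms of raw $2$-subword tallies in $w$, the identity reduces to the balance $\#\overline{y}x + \#\overline{y}\overline{x} = \#x\overline{y} + \#\overline{x}\overline{y}$, which expresses $\text{out-deg}(\overline{y}) = \text{in-deg}(\overline{y})$ in the cyclic word $w$ after cancelling the common term $\#\overline{y}\overline{y}$ and using that $y\overline{y}$ and $\overline{y}y$ do not occur. (The hypothesis $|w| \geq 2$ just ensures that these $2$-subword counts are meaningful.) Substituting the identity into $(y)_w = 2(y\overline{x})_w$ yields $(y\overline{x})_w = (yx)_w + (yy)_w$, as required.

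The main technical point is the no-cascade claim underlying the length formula: after a pair cancels, the surviving letter must not cancel with its new neighbor. The case analysis sketched above delivers this, and once it is in hand the remainder is routine bookkeeping with the subword-count notation.
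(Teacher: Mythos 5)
Your proof is correct, and it is essentially the argument the paper has in mind: the paper does not prove this lemma in detail (it defers to the authors' earlier work and remarks only that levelness means the cyclic cancellations balance the additions), and your length formula $|\phi(w)| = |w| + (y)_w - 2(y\overline{x})_w$ together with the degree identity $(y)_w = (yx)_w + (y\overline{x})_w + (yy)_w$ is a careful realization of exactly that counting. The no-cascade analysis and the observation that $|w|\geq 2$ is needed precisely so that the two-letter subword counts capture $(y)_w$ are the right points to check, and you handle both.
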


The next theorem follows easily from Corollary~\ref{decrease the length} and Lemma~\ref{level automorphism}.

\begin{theorem}\label{minimality}
A word $w \in C_2$ is minimal if and only if
\[
	|(ab)_w - (a\overline{b})_w| \leq \min((aa)_w, (bb)_w).
\]
\end{theorem}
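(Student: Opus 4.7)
My plan is to reduce minimality to four inequalities---one per principal automorphism---and then check that they fuse into the claimed symmetric bound. By Corollary~\ref{decrease the length}, $w$ is minimal if and only if none of $(\{a\},b)$, $(\{a\},\overline{b})$, $(\{b\},a)$, $(\{b\},\overline{a})$ decreases $|w|$, so the task reduces to characterizing, for a generic one-letter automorphism $\phi=(\{y\},x)$, when $|\phi(w)|\geq|w|$.

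For $|w|\geq 2$ I would extract from the commentary preceding Lemma~\ref{level automorphism} an explicit length-change formula. Cancellations under $\phi$ occur only at the pairs $y\overline{x}$ and $x\overline{y}$, contributing $2\,(y\overline{x})_w$ cancelled letters, while each $y$- or $\overline{y}$-position contributes one added letter. A short count of those positions by cyclic successor---using that in any cyclic word the number of transitions into $\overline{y}$ equals the number out of $\overline{y}$, hence $(\overline{y}x)_{\text{raw}}+(\overline{y}\overline{x})_{\text{raw}} = (x\overline{y})_{\text{raw}}+(\overline{x}\overline{y})_{\text{raw}}$---yields total additions $(yx)_w+(yy)_w+(y\overline{x})_w$. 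Subtracting gives
\[|\phi(w)|-|w| = (yx)_w + (yy)_w - (y\overline{x})_w,\]
which recovers Lemma~\ref{level automorphism} as the zero case. Hence $\phi$ does not decrease $|w|$ precisely when $(y\overline{x})_w - (yx)_w \leq (yy)_w$.

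I would then substitute the four principal automorphisms. Taking $y=a$ with $x\in\{b,\overline{b}\}$ yields a pair of inequalities that fuse to $|(ab)_w-(a\overline{b})_w|\leq(aa)_w$; taking $y=b$ with $x\in\{a,\overline{a}\}$ yields $|(ba)_w-(b\overline{a})_w|\leq(bb)_w$, which simplifies to $|(ab)_w-(a\overline{b})_w|\leq(bb)_w$ via the identity $(xy)_w=(yx)_w$ from the excerpt together with the definitional symmetry $(u)_w=(u^{-1})_w$. Conjoining the two bounds gives the theorem. The case $|w|=1$ is excluded from Lemma~\ref{level automorphism} but is immediate: each $w\in L_2$ is minimal by inspection, and every length-$2$ subword count vanishes by the convention $(u)_w=0$ when $|u|>|w|$, so both sides of the equivalence hold trivially. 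I anticipate no real obstacle here; the only point of care is pinning down the sign of the length-change formula, which a single test case (e.g.\ $(\{a\},b)$ applied to $ab$, which strictly lengthens it) settles.
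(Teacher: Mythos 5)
Your proposal is correct and follows exactly the route the paper intends: it states that Theorem~\ref{minimality} ``follows easily from Corollary~\ref{decrease the length} and Lemma~\ref{level automorphism},'' and those are precisely the two ingredients you combine, with the signed length-change formula $|\phi(w)|-|w|=(yx)_w+(yy)_w-(y\overline{x})_w$ being the natural inequality-version of Lemma~\ref{level automorphism} needed to pass from ``level'' to ``does not decrease.'' The substitution of the four principal automorphisms and the reduction via $(ba)_w=(ab)_w$, $(b\overline{a})_w=(a\overline{b})_w$ is exactly as expected, and your handling of the short-word edge case is fine.
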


Root words are words satisfying the boundary case of this inequality.

\begin{definition}
A word $w \in C_2$ is a \emph{root word} if
\[
	|(ab)_w - (a\overline{b})_w| = (aa)_w = (bb)_w.
\]
\end{definition}

This definition is different than, but equivalent to, the definition used in our previous paper~\cite[Theorem~7]{Cooper--Rowland}.

Examples of root words include $ab\overline{a}\overline{b}$, $aabb$, and $aba\overline{b}$; these words belong to classes~4.2 and 4.3 in Appendix~\ref{Table of automorphic conjugacy classes}, which lists representatives of all classes containing a word of length $n \leq 9$.

\begin{theorem}\label{divisibility by four}
If $w$ is a root word, then $|w|$ is divisible by $4$.
\end{theorem}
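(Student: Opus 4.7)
The plan is to translate the root-word identity into an expression for $|w|$ by counting length-two cyclic subwords of $w$, and then to extract divisibility by $4$ from a short parity argument.

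First I would write $|w|$ as a sum of six length-two subword counts. Since $w \in C_2$ rules out adjacent inverse letters (cyclically too), each of the $|w|$ ordered pairs $x_i x_{i+1}$ of cyclically consecutive letters is one of the twelve ordered pairs $xy$ with $x,y \in L_2$ and $y \neq \overline{x}$. These twelve pairs group under inversion into six orbits $\{xy, \overline{y}\overline{x}\}$, counted respectively by $(aa)_w$, $(bb)_w$, $(ab)_w$, $(ba)_w$, $(a\overline{b})_w$, $(\overline{a}b)_w$, so
\[
|w| = (aa)_w + (bb)_w + (ab)_w + (ba)_w + (a\overline{b})_w + (\overline{a}b)_w.
\]
Invoking the cyclic identity $(xy)_w = (yx)_w$ (stated just before Lemma~\ref{cyclic word}) yields $(ab)_w = (ba)_w$ and $(a\overline{b})_w = (\overline{a}b)_w$, and the expression collapses to
\[
|w| = (aa)_w + (bb)_w + 2(ab)_w + 2(a\overline{b})_w.
\]

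Next I would set $k = (aa)_w = (bb)_w$ and use the root-word hypothesis $(ab)_w - (a\overline{b})_w = \pm k$, which gives $|w| = 2k + 2\bigl((ab)_w + (a\overline{b})_w\bigr)$ and already shows $|w|$ is even. A short parity argument promotes this to divisibility by $4$: the sum $(ab)_w + (a\overline{b})_w$ has the same parity as the difference $(ab)_w - (a\overline{b})_w = \pm k$, so $k + (ab)_w + (a\overline{b})_w$ is even, and hence $4 \mid |w|$.

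No serious obstacle is expected. The only care required is in the initial bookkeeping — verifying that the six inverse-orbits of length-two pairs exhaust all cyclic adjacencies of $w$ exactly once, and correctly recognizing $(\overline{a}b)_w$ as $(a\overline{b})_w$ via the $(xy)_w = (yx)_w$ identity — after which the parity step is automatic.
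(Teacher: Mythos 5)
Your argument is correct, and it is worth noting that the paper itself gives no proof of this theorem: it is one of the facts imported from the earlier paper of Cooper and Rowland, so there is no in-text proof to compare against. Your route is a clean, self-contained one. The decomposition of $|w|$ into the six inverse-orbits of cyclically adjacent ordered pairs is exactly right (for each of the $|w|$ cyclic positions, the pair $x_i x_{i+1}$ lands in precisely one of the orbits $\{aa,\overline{a}\overline{a}\}$, $\{bb,\overline{b}\overline{b}\}$, $\{ab,\overline{b}\overline{a}\}$, $\{ba,\overline{a}\overline{b}\}$, $\{a\overline{b},b\overline{a}\}$, $\{\overline{a}b,\overline{b}a\}$), the identification $(\overline{a}b)_w = (b\overline{a})_w = (a\overline{b})_w$ via $(xy)_w=(yx)_w$ and closure under inversion is valid, and the final parity step $k + (ab)_w + (a\overline{b})_w \equiv k + \bigl((ab)_w - (a\overline{b})_w\bigr) \equiv 2k \equiv 0 \pmod 2$ is airtight.

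One small caveat you should make explicit: the identity $|w| = (aa)_w + (bb)_w + 2(ab)_w + 2(a\overline{b})_w$ requires $|w| \geq 2$, since the paper's convention sets $(u)_w = 0$ whenever $|u| > |w|$. This is not a defect of your argument so much as a boundary issue in the statement itself: the word $w = a$ vacuously satisfies $|(ab)_w - (a\overline{b})_w| = (aa)_w = (bb)_w = 0$ under the stated definition yet has length $1$, so the theorem implicitly presumes $|w| \neq 1$ (the authors' original definition of root word in the cited paper is only claimed to be equivalent to this one, and the equivalence evidently breaks down for one-letter words). Adding the hypothesis $|w| \geq 2$, or simply remarking that the empty word is trivial and length-one words are excluded, closes the gap.
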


An automorphic conjugacy class $W$ is a \emph{root class} if it contains a root word and a \emph{non-root class} if it does not.
Theorem~\ref{root classes} states that all minimal words in a root class are root words.

\begin{theorem}\label{root classes}
If $w$ is a root word, $w \sim v$, and $|w| = |v|$, then $v$ is a root word.
\end{theorem}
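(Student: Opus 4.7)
The plan is to use Whitehead's theorem to reduce to a single length-preserving step, then to check that permutations and principal automorphisms which are level on a root word preserve the root word property. Since $w$ is a root word, Theorem~\ref{minimality} implies $w$ is minimal; because $|v|=|w|$, so is $v$. Whitehead's theorem then provides a length-nonincreasing sequence $\phi_1,\ldots,\phi_m$ of Type~I and Type~II automorphisms with $\phi_m\cdots\phi_1(w)=v$. The length is $n$ at both ends and is nonincreasing, so it is constant throughout, and each $\phi_i$ is level on the word it is applied to. By the decomposition~(\ref{inner automorphism}), every Type~II factor equals an inner automorphism times a principal automorphism, and inner automorphisms preserve every cyclic subword count and hence the root word identity. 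It therefore suffices to show: if $u\in C_2$ is a root word and $\phi$ is either a permutation or a principal automorphism that is level on $u$, then $\phi(u)$ is a root word.

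For a permutation $\pi$, the induced action on the $4$-tuple $((aa)_u,(bb)_u,(ab)_u,(a\overline{b})_u)$ is generated by two commuting involutions: exchanging the generators $a$ and $b$ swaps $(aa)_u$ with $(bb)_u$ and fixes $(ab)_u,(a\overline{b})_u$ (via $(xy)_u=(yx)_u$), while inverting a single generator fixes $(aa)_u,(bb)_u$ and swaps $(ab)_u$ with $(a\overline{b})_u$. Each involution visibly preserves the root word identity $|(ab)_u-(a\overline{b})_u|=(aa)_u=(bb)_u$.

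For a principal automorphism $\phi=(\{y\},x)$, Lemma~\ref{permutation} allows us to conjugate $\phi$ to $(\{a\},b)$ by a permutation, so combining with the permutation case we may assume $\phi=(\{a\},b)$. Lemma~\ref{cyclic word} expresses $(aa)_{\phi(u)}$ and $(bb)_{\phi(u)}$ in terms of length-$3$ subword counts in $u$ by tracking how each occurrence of $aa$ or $bb$ in $\phi(u)$ arises from $u$; the same kind of case analysis yields analogous formulas for $(ab)_{\phi(u)}$ and $(a\overline{b})_{\phi(u)}$. Combining these four formulas with the level hypothesis $(a\overline{b})_u=(ab)_u+(aa)_u$ and the root word identity for $u$, one verifies by direct calculation that $|(ab)_{\phi(u)}-(a\overline{b})_{\phi(u)}|=(aa)_{\phi(u)}=(bb)_{\phi(u)}$, so $\phi(u)$ is a root word.

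The main obstacle is this last calculation. The length-$3$ subword counts appearing in the formulas for the four length-$2$ counts of $\phi(u)$ are not individually determined by the length-$2$ counts of $u$; the root word identity transfers because the particular combinations of length-$3$ counts arising on the two sides of the identity collapse to equal expressions in length-$2$ counts of $u$ under the level hypothesis. Organizing this cancellation — or equivalently, dispatching the three sign cases $(ab)_u > (a\overline{b})_u$, $(ab)_u=(a\overline{b})_u$, and $(ab)_u<(a\overline{b})_u$ separately — is where the technical work lies.
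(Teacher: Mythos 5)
The paper itself does not prove Theorem~\ref{root classes}; it imports it from the earlier paper \cite{Cooper--Rowland}, so there is no in-text proof to compare against and your argument has to stand on its own. Your reduction framework is correct: $w$ is minimal by Theorem~\ref{minimality}, hence so is $v$; Whitehead's theorem then forces every intermediate word to have length $n$, so every factor is level; Type~II factors are, modulo inner automorphisms (which preserve all cyclic subword counts), either trivial or principal; permutations act on the $4$-tuple of counts by the two involutions you describe and visibly preserve the root identity; and Lemma~\ref{permutation} reduces the principal case to $\phi=(\{a\},b)$. The genuine gap is that the one step carrying all the content --- that $\phi(u)$ is a root word when $\phi=(\{a\},b)$ is level on the root word $u$ --- is asserted as ``one verifies by direct calculation'' and then, in your closing paragraph, explicitly identified as the unresolved obstacle. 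As written this is a plan for a proof, not a proof.

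The calculation does close, and more easily than your sketch suggests; in particular you do not need formulas for $(ab)_{\phi(u)}$ or $(a\overline{b})_{\phi(u)}$, nor a three-way sign split. First, levelness of $\phi$ on $u$ gives $(a\overline{b})_u-(ab)_u=(aa)_u\geq 0$, so the sign is determined and the root hypothesis on $u$ reduces to $(aa)_u=(bb)_u$. Second, $\phi^{-1}=(\{a\},\overline{b})$ is level on $v=\phi(u)$, so Lemma~\ref{level automorphism} gives $(ab)_v-(a\overline{b})_v=(aa)_v$ for free, and the root condition for $v$ reduces to the single identity $(aa)_v=(bb)_v$, i.e.\ by Lemma~\ref{cyclic word} to $(a\overline{b}a)_u=(aba)_u+(abb)_u+(bba)_u+(bbb)_u$. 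This follows from the extension identities $(a\overline{b})_u=(a\overline{b}a)_u+(a\overline{b}\overline{a})_u+(a\overline{b}\overline{b})_u$, $(ab)_u=(aba)_u+(ab\overline{a})_u+(abb)_u$, $(bb)_u=(bbb)_u+(bba)_u+(bb\overline{a})_u$, together with $(a\overline{b}\overline{a})_u=(ab\overline{a})_u$ and $(a\overline{b}\overline{b})_u=(bb\overline{a})_u$ (each pair consists of mutually inverse words): subtracting gives $(a\overline{b}a)_u-(aba)_u-(abb)_u-(bba)_u-(bbb)_u=(a\overline{b})_u-(ab)_u-(bb)_u=(aa)_u-(bb)_u=0$. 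Alternatively, one can avoid subword counts entirely: Lemma~\ref{level images}(ii) applied to $u$ shows $(\{x\},y)$ is level on $u$, Lemma~\ref{triangle} and Equation~\eqref{inner automorphism} then show $(\{\overline{x}\},y)$ is level on $\phi(u)$, and Lemma~\ref{level images}(ii) applied to $\phi(u)$ with its level automorphism $(\{y\},\overline{x})$ concludes that $\phi(u)$ is a root word.
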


A word $w \in C_2$ is \emph{alternating} if $(aa)_w = 0 = (bb)_w$.
For example, $aba\overline{b}$ and $abab$ are alternating.

\begin{theorem}\label{alternating minimal word}
Let $w \in C_2$.
The following are equivalent.
\begin{itemize}
\item
$w$ is an alternating minimal word.
\item
$w$ is an alternating root word.
\item
The four principal one-letter automorphisms are level on $w$.
\end{itemize}
\end{theorem}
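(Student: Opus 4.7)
The plan is to show that all three conditions coincide with the single arithmetic condition
\[
	(aa)_w = 0 = (bb)_w \quad\text{and}\quad (ab)_w = (a\overline{b})_w.
\]

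The equivalence of the first two bullets is essentially definitional. If $w$ is alternating, then $(aa)_w = (bb)_w = 0$, so the minimality inequality of Theorem~\ref{minimality} collapses to $|(ab)_w - (a\overline{b})_w| \leq 0$, while the defining equation of a root word collapses to $|(ab)_w - (a\overline{b})_w| = 0$; both reduce to $(ab)_w = (a\overline{b})_w$, so ``alternating minimal'' and ``alternating root'' are literally the same condition.

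For the equivalence with the third bullet, I would apply Lemma~\ref{level automorphism} to each of the four principal automorphisms, obtaining
\begin{align*}
(a\overline{b})_w &= (ab)_w + (aa)_w, &
(ab)_w &= (a\overline{b})_w + (aa)_w, \\
(b\overline{a})_w &= (ba)_w + (bb)_w, &
(ba)_w &= (b\overline{a})_w + (bb)_w.
\end{align*}
Using the identity $(xy)_w = (yx)_w$ noted after Lemma~\ref{cyclic word}, together with the fact that $(u)_w = (u^{-1})_w$ by construction, the bottom row becomes the top row with $(aa)_w$ replaced by $(bb)_w$. Adding the two equations in each row then yields $(aa)_w = 0$ and $(bb)_w = 0$ respectively, and with these in hand any single equation gives $(ab)_w = (a\overline{b})_w$. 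Conversely, the displayed arithmetic condition makes all four level relations hold trivially.

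No step presents real difficulty: once Lemma~\ref{level automorphism} is available, the argument is pure bookkeeping in subword counts. The only mild wrinkle is that Lemma~\ref{level automorphism} presumes $|w| \geq 2$, but the case $|w| = 1$ is a degenerate boundary (no principal automorphism is level on a single letter) that should be noted or excluded explicitly.
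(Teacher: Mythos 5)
This theorem is one of the facts the paper imports from the authors' earlier work \cite{Cooper--Rowland} and states without proof, so there is no in-paper argument to compare against; judged on its own, your derivation is correct and is exactly what the machinery restated in Section~\ref{Introduction} supports. Reducing all three bullets to the single condition $(aa)_w=(bb)_w=0$ and $(ab)_w=(a\overline{b})_w$ works: the first two bullets collapse to it immediately via Theorem~\ref{minimality} and the root-word definition, and your four instances of Lemma~\ref{level automorphism}, simplified using $(xy)_w=(yx)_w$ and $(u)_w=(u^{-1})_w$, pair off to give $(aa)_w=0$ and $(bb)_w=0$ and then $(ab)_w=(a\overline{b})_w$, with the converse being immediate. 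One small correction to your closing remark: it is not true that no principal automorphism is level on a single letter --- $(\{b\},a)$ and $(\{b\},\overline{a})$ fix $a$ and hence are level on it, while $(\{a\},b)$ and $(\{a\},\overline{b})$ are not. The upshot is actually sharper than you state: for $w=a$ the first two bullets hold (all the relevant subword counts vanish by the convention $(u)_w=0$ when $|u|>|w|$) but the third fails, so $|w|=1$ is a genuine counterexample to the literal statement and must be excluded outright, not merely flagged; this is consistent with the paper's own parenthetical caveat before Lemma~\ref{level automorphism} and with class~1.1 being listed as type (R1) rather than (R4).
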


The outline of the paper is as follows.
The following section contains the definition of the graph $\Gamma(W)$ and the main theorems of the paper.
These theorems are proved in Sections~\ref{Non-root classes} and \ref{Root classes}.
We conclude in Section~\ref{Enumeration} with conjectures on the number of automorphic conjugacy classes whose minimal words have length $n$.

\section{The graph $\Gamma(W)$}\label{The graph}

In this section we define $\Gamma(W)$, a directed graph associated with an automorphic conjugacy class $W$.
We then state Theorems~\ref{non-root class}--\ref{alternating root class}, which classify these graphs.

The basic idea is to consider a graph where the vertices are minimal words in $W$ and an edge from $w$ to $v$ represents a one-letter automorphism that maps $w$ to $v$.
Note that there are finitely many minimal words in $W$, since there are finitely many words of length $n$.
Therefore the vertex set is finite.
To reduce the number of vertices, we only select distinct minimal words up to ``cosmetic'' similarity.
Namely, if two minimal words are mapped to each other by an inner automorphism and a permutation, then we consider them to be representatives of the same vertex.

More formally, let $J$ be the subgroup of automorphisms of $F_2$ generated by inner automorphisms and permutations.
Write $w \sim_J v$ if $\phi(w) = v$ for some $\phi \in J$.
In particular, if $w \equiv v$ then $w \sim_J v$.
Define $[w]$ to be the equivalence class of $w$ under $\sim_J$, and let the vertices of $\Gamma(W)$ be the equivalence classes of minimal words in $W$ under $\sim_J$.
Note that the vertices in the graphs considered by Khan~\cite{Khan} are equivalence classes modulo inner automorphisms only; hence his graph for an automorphic conjugacy class $W$ has up to $8$ times as many vertices as $\Gamma(W)$ (fewer if there are symmetries in a word).

We now describe the edges of $\Gamma(W)$.
Since $J$ is not a normal subgroup of $\Aut F_2$, we cannot define $\phi([w])$ to be $[\phi(w)]$, because the map $u \mapsto [\phi(u)]$ is not invariant on the minimal words in $[w]$.

\begin{example}
Consider $w = aa$ and $v = bb \in [w]$.
Let $\phi = (\{b\}, a)$.
We have $\phi(w) = w = aa$ and $\phi(v) = baba$, and it is clear that $[aa] \neq [baba]$.
\end{example} 

Instead, if $\phi$ is a one-letter automorphism, let $[\phi]$ be the equivalence class of $\phi$ modulo $\Inn F_2$.
Let $w, v \in C_2$ be minimal words such that $w \sim v$.
We say that $[w]$ is \emph{connected} to $[v]$ by $[\phi]$ if $\phi(w) \in [v]$.
We draw one directed edge in $\Gamma(W)$ from $[w]$ to $[v]$ for each equivalence class $[\phi]$ of one-letter automorphisms such that $[w]$ is connected to $[v]$ by $[\phi]$.

To show that $\Gamma(W)$ is well-defined, we must show that the number of edges from $[w]$ to $[v]$ does not depend on the representatives.
First we show that the property of two vertices being connected does not depend on the 
representatives.
Indeed, suppose that $[w]$ is connected to $[v]$ by $[\phi]$, and let $w' \in [w]$ and $v' \in [v]$.
Then $w' \equiv \pi(w)$ for some permutation $\pi$; letting $\phi' = \pi \phi \pi^{-1}$ gives $\phi'(w') \equiv \pi \phi(w) \in [v] = [v']$.
By Lemma~\ref{permutation}, $\phi'$ is a one-letter automorphism, so $[w']$ is connected to $[v']$ by $[\phi']$.
Note that in general $[\phi'] \neq [\phi]$.
However, the map $\phi \mapsto \pi \phi \pi^{-1}$ is a bijection on the set of one-letter automorphisms.
Moreover, one-letter automorphisms which are equivalent modulo $\Inn F_2$ have images under this map that are equivalent modulo $\Inn F_2$; this can be seen from Lemma~\ref{permutation}.
Therefore the number of edges from $[w]$ to $[v]$ is independent of the representatives chosen.
Hence the graph $\Gamma(W)$ is well-defined.

By Whitehead's theorem, $\Gamma(W)$ is connected.
We see that, by definition, the outdegree of each vertex in $\Gamma(W)$ is at most $4$.
Note that $\Gamma(W)$ can have loops and multiple edges.

\begin{example}
Consider the automorphic conjugacy class $W$ containing the minimal word $aabb$.
This class is class~4.3 in Appendix~\ref{Table of automorphic conjugacy classes}.
The images of $aabb$ under the principal one-letter automorphisms are
\begin{align*}
	(\{a\}, b)(aabb) &= ababbb \\
	(\{a\}, \overline{b})(aabb) &= a\overline{b}ab \\
	(\{b\}, a)(aabb) &= aababa \\
	(\{b\}, \overline{a})(aabb) &\equiv ab\overline{a}b.
\end{align*}
The first and third images are not minimal, so they are not represented in $\Gamma(W)$.
The second and fourth images are elements of $[aba\overline{b}]$, which is distinct from the vertex $[aabb]$.
So let us compute the images of $aba\overline{b}$ under the principal automorphisms:
\begin{align*}
	(\{a\}, b)(aba\overline{b}) &= abba \\
	(\{a\}, \overline{b})(aba\overline{b}) &= aa\overline{bb} \\
	(\{b\}, a)(aba\overline{b}) &= aba\overline{b}			\qquad \text{(a loop)} \\
	(\{b\}, \overline{a})(aba\overline{b}) &= aba\overline{b}	\qquad \text{(a loop)}.
\end{align*}
The first two images are elements of $[aabb]$, so $|V(\Gamma(W))| = 2$ and $\Gamma(W)$ is
\medskip	
\[
	\xymatrix{
		[aabb] \ar@/^.6pc/[r] \ar@/^.2pc/[r] & \ar@/^.6pc/[l] \ar@/^.2pc/[l] [aba\overline{b}]\ar@(ul, ur)[] \ar@(dr, dl)[]
	}.
\bigskip	
\]
\end{example}

The words listed in Appendix~\ref{Table of automorphic conjugacy classes} for each automorphic conjugacy class are representatives of the vertices of $\Gamma(W)$.
They are the minimal words in $W$ that appear first lexicographically (with the order $a < b < \overline{a} < \overline{b}$ on $L_2$) among their images under inner automorphisms and permutations.
From the listed representatives, one can compute $\Gamma(W)$ by drawing an edge from $[w]$ to $[v]$ for each principal automorphism $\phi$ such that $\phi(w) \sim_J v$.

If there is an edge in $\Gamma(W)$ from $[w]$ to $[v]$ then there is an edge from $[v]$ to $[w]$, since if $\phi(w) = v$ then $\phi^{-1}(v) = w$.
Therefore we say that $[w]$ and $[v]$ are \emph{neighbors} if there is an edge from $[w]$ to $[v]$ (and from $[v]$ to $[w]$) without distinguishing ``out-neighbors'' from ``in-neighbors''.

Note, however, that the number of edges from $[w]$ to $[v]$ is not necessarily equal to the number of edges from $[v]$ to $[w]$, as the following example illustrates.

\begin{example}
Consider automorphic conjugacy class 6.10.
The minimal words $aaaabb$, $aaab\overline{a}b$, and $aab\overline{aa}b$ are vertex representatives for $\Gamma(W)$.
Neither the automorphism $(\{a\}, \overline{b})$ nor its inverse are level on any of these three words.
Let $\phi = (\{b\}, \overline{a})$.
We have $\phi(aaaabb) \equiv aaab\overline{a}b$ and $\phi(aaab\overline{a}b) \equiv aab\overline{aa}b$.
Note that $\phi^{-1}$ is not level on $aaaabb$, so $[aaaabb]$ has outdegree $1$.
On $aab\overline{aa}b$, $\phi$ has the effect of $\phi(aab\overline{aa}b) \equiv ab\overline{aaa}b \equiv \pi(aaab\overline{a}b)$, where $\pi$ is the permutation which maps $a \mapsto \overline{a}$ and $b \mapsto b$, so we have an edge $\pi \phi$ from $aab\overline{aa}b$ to $aaab\overline{a}b$.
Therefore, $\Gamma(W)$ with its vertices labeled is
\[
	\xymatrixcolsep{1cm}
	\xymatrix{
		aaaabb \ar@/^/[r]^\phi & \ar@/^/[l]^{\phi^{-1}} aaab\overline{a}b \ar@/^/[r]^\phi & \ar@/^/[l]^{\phi^{-1}} \ar[l]|{\pi \phi} aab\overline{aa}b
	}.
\]
We suppress brackets here to emphasize that we have fixed a representative of each vertex and that the edge labels are acting on these representatives; in other words, there are no hidden permutations.
As will emerge from the proof of Lemma~\ref{semi-alternating at least 1}, one can think of $\Gamma(W)$ as the path
\[
	\xymatrix{
		aaaabb \ar@/^/[r]^\phi & \ar@/^/[l]^{\phi^{-1}} aaab\overline{a}b \ar@/^/[r]^\phi & \ar@/^/[l]^{\phi^{-1}} aab\overline{aa}b \ar@/^/[r]^\phi & \ar@/^/[l]^{\phi^{-1}} ab\overline{aaa}b \ar@/^/[r]^\phi & \ar@/^/[l]^{\phi^{-1}} b\overline{aaaa}b
	}
\]
folded in half to account for $\pi(aaaabb) \equiv b\overline{aaaa}b$ and $\pi(aaab\overline{a}b) \equiv ab\overline{aaa}b$.
The symmetry in the center word $aab\overline{aa}b$ allows $\pi(aab\overline{aa}b) \equiv aab\overline{aa}b$.
Only three of the four edges between $aab\overline{a}\overline{a}b$ and its neighbors survive the folding, since $\pi$ is applied before $\phi^{-1}$ in $\phi^{-1} \pi(aaab\overline{a}b) \equiv aab\overline{aa}b$, so this automorphism does not contribute an edge to $\Gamma(W)$.
\end{example}

It is also possible for a vertex to have a single loop due to a symmetry in a word.

\begin{example}
If $w = aababaa\overline{b}\overline{b}$ then the automorphism $(\{b\}, \overline{a})$ maps $w$ to the word $(\{b\}, \overline{a})(w) = aabbaa\overline{b}a\overline{b}$.
Let $\pi$ map $a \mapsto a, b \mapsto \overline{b}$; since $\pi (\{b\}, \overline{a})(w) \equiv w$, the vertex $[w]$ has a loop.
However, there is only one loop on $[w]$, since the other three principal one-letter automorphisms are not level on $w$.
This is class~9.43.
\end{example}

The following are our main theorems.
Theorem~\ref{non-root class} is proved in Section~\ref{Non-root classes}, and Section~\ref{Root classes} contains the proofs of Theorems~\ref{non-alternating root class} and \ref{alternating root class}.

\begin{theorem}\label{non-root class}
Let $W$ be a non-root class.
Then $\Gamma(W)$ has one of the following forms.
\begin{itemize}
\item[(P1)]
a simple path
\[
	\xymatrix{
		\bullet \ar@/^/[r] & \ar@/^/[l] \bullet \ar@/^/[r] & \ar@/^/[l] \bullet
	}
	\cdots
	\xymatrix{
		\bullet \ar@/^/[r] & \ar@/^/[l] \bullet \ar@/^/[r] & \ar@/^/[l] \bullet
	}
\]
possibly in its degenerate form
\[
	\xymatrix{
		\bullet
	}
\]
\item[(P2)]
a looped path
\[
	\xymatrix{
		\bullet \ar@/^/[r] & \ar@/^/[l] \bullet \ar@/^/[r] & \ar@/^/[l] \bullet
	}
	\cdots
	\xymatrix{
		\bullet \ar@/^/[r] & \ar@/^/[l] \bullet \ar@/^/[r] & \ar@/^/[l] \bullet \ar@(ur, dr)[] 
	}
\]
possibly in its degenerate form
\[
	\xymatrix{
		\ar@(ur, dr)[] \bullet
	}
\]
\item[(P3)]
a double-edged path
\[
	\xymatrix{
		\bullet \ar@/^/[r] & \ar@/^/[l] \bullet \ar@/^/[r] & \ar@/^/[l] \bullet
	}
	\cdots
	\xymatrix{
		\bullet \ar@/^/[r] & \ar@/^/[l] \bullet \ar@/^/[r] & \ar@/^/[l] \ar[l] \bullet
	}
\]
possibly in its degenerate form
\[
	\xymatrix{
		\bullet \ar@(u, r)[] \ar@(r, d)[]
	}
\bigskip	
\]
\end{itemize}
\end{theorem}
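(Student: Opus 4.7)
The plan is to translate the structural claim into a vertex-by-vertex analysis of which principal one-letter automorphisms are level on a representative of each vertex, using Lemma~\ref{cyclic word}, Lemma~\ref{level automorphism}, and the minimality criterion of Theorem~\ref{minimality}. Set $d(w) = (ab)_w - (a\overline{b})_w$. Applying Lemma~\ref{level automorphism} together with the identity $(xy)_w = (yx)_w$, the four principal automorphisms $(\{a\},b)$, $(\{a\},\overline{b})$, $(\{b\},a)$, $(\{b\},\overline{a})$ are level on $w$ if and only if $d(w)$ equals $-(aa)_w$, $(aa)_w$, $-(bb)_w$, $(bb)_w$ respectively. Theorem~\ref{minimality} forces $|d(w)| \leq \min((aa)_w, (bb)_w)$ for any minimal $w$, and the non-root hypothesis excludes the simultaneous equality $|d(w)| = (aa)_w = (bb)_w$.

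An elementary case split on the sign of $d(w)$ and on which of $(aa)_w, (bb)_w$ is the minimum then shows that every vertex of $\Gamma(W)$ has outdegree at most $2$. More precisely, the outdegree is $0$ when $|d(w)| < \min((aa)_w, (bb)_w)$; it is exactly $1$, realized by a single principal automorphism, when $d(w) \neq 0$ and $|d(w)| = \min((aa)_w, (bb)_w) < \max((aa)_w, (bb)_w)$; and it is exactly $2$, realized by an inverse pair of principal automorphisms, when $d(w) = 0$ and exactly one of $(aa)_w, (bb)_w$ vanishes while the other is positive. The remaining alternating possibility $(aa)_w = (bb)_w = 0$ is excluded by Theorem~\ref{alternating minimal word}, since alternating minimal words are root words.

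Since each edge $[w] \to [v]$ is matched by a reverse edge $[v] \to [w]$, the outdegree bound together with connectivity of $\Gamma(W)$ forces the underlying shape to be either a path or a cycle. To separate these I would start at an outdegree-$2$ vertex, if any, and iterate the forward level automorphism $\phi$, tracking the evolution of $(aa)_w$ and $(bb)_w$ via Lemma~\ref{cyclic word}; because in the outdegree-$2$ regime exactly one of these counts vanishes, a single step of $\phi$ alters the multiset $\{(aa)_{\phi(w)}, (bb)_{\phi(w)}\}$ in a controlled way, and the resulting monotonicity of a suitable integer statistic forbids a closed orbit. For the endpoint behavior I would observe that the outdegree-$2$ inverse pair $\phi, \phi^{-1}$ either sends $w$ to two distinct vertices (an interior configuration), to a single vertex twice (producing the double edge of P3), or sends $w$ to itself up to $J$ (producing the loop of P2); the collapsed variants can only occur at an extremity, by the same monotonicity.

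The main obstacle is this monotonicity step: both ruling out a cyclic shape and forcing loops and double edges to sit at an extremity reduce to a careful accounting, via Lemma~\ref{cyclic word}, of how the forward level automorphism changes the subword statistics of $w$. Once this monotonicity is in place, matching the resulting path to one of P1, P2, or P3 is straightforward and depends only on the endpoint behavior of the forward automorphism.
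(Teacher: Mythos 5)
Your outdegree analysis is correct and is essentially equivalent to what the paper establishes via Lemma~\ref{level images}, Corollary~\ref{outdegrees}, and Lemma~\ref{semi-alternating outdegree}: writing the four levelness conditions as $d(w) = \pm(aa)_w, \pm(bb)_w$ and intersecting with minimality and the non-root hypothesis does yield outdegree $0$, $1$ (a single automorphism, when $d \neq 0$), or $2$ (an inverse pair $(\{y\},x), (\{y\},\overline{x})$, when $(yy)_w = 0$), and reversibility plus connectivity then reduces everything to the path-versus-cycle and endpoint questions. So the first half of your proposal is sound.

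The gap is the step you yourself flag as the main obstacle: the monotone integer statistic. The quantities you propose to track, $(aa)_w$ and $(bb)_w$, are not monotone along the orbit of the forward level automorphism, so they cannot rule out a closed orbit or locate the endpoints. Class~6.10 is already a counterexample: along the unfolded path $aaaabb,\ aaab\overline{a}b,\ aab\overline{aa}b,\ ab\overline{aaa}b,\ b\overline{aaaa}b$ under $\phi = (\{b\},\overline{a})$, the count $(aa)_w$ runs $3,2,2,2,3$. The statistic that actually works, and which the paper introduces for exactly this purpose, is $m_x(w) = \min\{i \geq 0 : (yx^iy)_w \geq 1\}$ (and its companion $m_{\overline{x}}$), i.e., the minimal cyclic separation between a letter $y^{\pm 1}$ and its next occurrence with the same sign. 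The identity $\phi(yx^iy) = yx^{i+1}yx$ (Equation~\eqref{minus 1} in the paper) gives the exact recursion $m_{\overline{x}}(\phi(w)) = m_{\overline{x}}(w) - 1$, which linearizes the orbit: after $m_{\overline{x}}(w)$ steps the word acquires a $yy$, ceases to be semi-alternating, drops to outdegree $1$, and the orbit stops. Without this (or an equivalent) statistic you also cannot carry out the remaining bookkeeping your sketch needs: (a) the case where $yx^iy$ never occurs for any $i$ ($m_x(w) = \infty$), in which $w$ is built from blocks $y^{\varepsilon}x^iy^{-\varepsilon}$ and is genuinely fixed by $\phi$ and $\phi^{-1}$, producing the degenerate (P3); and (b) the folding analysis, where you must know that the two endpoint words are the only non-semi-alternating ones (having $(yy)_u \geq 1$) so that the only identifications under $\sim_J$ that can occur are endpoint-with-endpoint or symmetric foldings about the middle, which is precisely what produces (P2) and (P3) rather than a branched or cyclic graph. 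In short: the skeleton of your argument matches the paper's, but the proof is incomplete until the separation statistic $m_x$ and its one-step decrease are supplied.
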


We have referred to the double-looped vertex as a degenerate double-edged path.
This is merely for purposes of convenience; it is not the case that the proof of Theorem~\ref{non-root class} will illustrate a sense in which they are related.
Alternatively, we could have given the double-looped vertex its own label and required that double-edged paths have at least two vertices.
However, then we would also have separated the unlooped vertex and the single-looped vertex from their families, since our proofs in Section~\ref{Non-root classes} treat them separately as well.

\begin{theorem}\label{non-alternating root class}
Let $W$ be a root class with no alternating minimal word.
Then $\Gamma(W)$ is one of the following graphs.
\begin{itemize}
\item[(R1)]
\[
	\xymatrix{
		\bullet \ar@(u, r)[] \ar@(r, d)[]
	}
\]
\item[(R2)]
\[
	\xymatrix{
		\bullet \ar[r] \ar@/^/[r] & \ar@/^/[l] \bullet \ar@(ur, dr)[]
	}
\]
\item[(R3)]
\[
	\xymatrix{
		& {\bullet} \ar@/^/[ld] \ar@/^/[dr] \\
		{\bullet} \ar@/^/[rr] \ar@/^/[ur] & & {\bullet} \ar@/^/[ll] \ar@/^/[ul]
	}
\]
\end{itemize}
\end{theorem}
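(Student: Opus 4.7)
My plan is first to determine the outdegree of every vertex of $\Gamma(W)$ using the root condition, then to bound $|V(\Gamma(W))|$ by $3$ via a small-orbit argument, and finally to enumerate the possibilities. Let $w$ be a minimal word in $W$; by Theorem~\ref{root classes} it is a root word, and the non-alternating hypothesis combined with Theorem~\ref{alternating minimal word} gives $c := (aa)_w = (bb)_w \geq 1$. Replacing $w$ by a suitable permutation in $[w]$, I may assume $p := (ab)_w = q + c$, where $q := (a\overline{b})_w$. With this profile, Lemma~\ref{level automorphism} shows that among the four principal automorphisms, exactly $\phi := (\{a\},\overline{b})$ and $\psi := (\{b\},\overline{a})$ are level on $w$, while the other two strictly lengthen $w$ by $2c \geq 2$ and hence contribute no edge to $\Gamma(W)$. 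Thus every vertex of $\Gamma(W)$ has outdegree exactly $2$.

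The main structural step is to show $|V(\Gamma(W))| \leq 3$. For this I would apply Lemma~\ref{cyclic word} (together with the analogous formulas for $(ab)_{\phi(w)}$ and $(a\overline{b})_{\phi(w)}$ obtained from length-$3$ subword counts of $w$) to track how the profile $(c,p,q)$ changes under $\phi$ and $\psi$. The identity $n = 2c + 2p + 2q$ together with the root condition forces $c + \min(p,q) = n/4$, and combined with the non-alternating hypothesis $c \geq 1$ this constrains the transformation tightly: any application of a level automorphism either flips the sign of $p - q$ while preserving $c$, or alters $c$ in a controlled way. Building on the structural characterization of root words from our previous paper~\cite{Cooper--Rowland}, one shows that the orbit of $[w]$ under $\phi$ and $\psi$ contains at most three $\sim_J$-classes; indeed, a longer orbit would contain a word with $c = 0$, contradicting the non-alternating hypothesis via Theorem~\ref{alternating minimal word}.

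With $|V(\Gamma(W))| \leq 3$, outdegree $2$ at every vertex, and strong connectedness (by Whitehead's theorem), the remaining enumeration is finite. On one vertex, the two out-edges must both be loops, giving (R1). On two vertices, the strongly connected outdegree-$2$ configurations are few, and the symmetric candidates (one loop on each vertex with a single edge in each direction; two edges in each direction) can be ruled out by exhibiting a composition of level automorphisms that would either fix $[w]$ with strictly smaller $c$ or swap $\phi$ and $\psi$ via a symmetry, eventually producing an alternating minimal word in $W$; the only survivor is (R2). The same symmetry argument pins down the three-vertex case to the doubled triangle (R3). The main obstacle is the bound $|V(\Gamma(W))| \leq 3$: this does not follow from general considerations but rests on combining Lemma~\ref{cyclic word} with the structural results on root words from~\cite{Cooper--Rowland}. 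Once this bound is established, the case analysis is essentially mechanical.
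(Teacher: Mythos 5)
The outdegree computation in your first paragraph is correct and matches the paper's (which gets it from Lemma~\ref{level images} and Corollary~\ref{outdegrees}). The problem is the central step, the bound $|V(\Gamma(W))| \leq 3$, which you yourself flag as the main obstacle and then do not actually establish. Your proposed mechanism --- tracking the profile $(c,p,q)$ under level automorphisms and arguing that ``a longer orbit would contain a word with $c=0$'' --- is not substantiated and is not believable as stated: the quantity $c = (aa)_w$ is not monotone along the orbit (in class~8.37, for instance, the three vertex representatives have $c=2$, $c=1$, $c=1$), and nothing in Lemma~\ref{cyclic word} or the cited structural results forces an orbit of length greater than three to pass through an alternating word. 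Appealing to ``the structural characterization of root words from~\cite{Cooper--Rowland}'' without saying which statement does the work leaves the key claim unproved.

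The idea you are missing is the paper's Lemma~\ref{triangle}, the composition identity $(\{\overline{x}\}, y)(\{y\},x) = \pi (\{x,\overline{x}\},y)(\{\overline{x}\},\overline{y})$. Since a non-alternating root word $v$ has exactly two level one-letter automorphisms, namely $\phi^{-1}$ and $(\{\overline{x}\},y)$ where $\phi = (\{y\},x)$ maps $w$ to $v$, any length-two path $w \to v \to u$ in $\Gamma(W)$ either backtracks (giving $[u]=[w]$) or, by the triangle identity, collapses to a single edge from $[w]$ to $[u]$. Hence any two distinct vertices are neighbors, and with outdegree $2$ everywhere this gives at most three vertices. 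The same identity is what pins down the edge multiplicities in the two-vertex case to produce (R2); your proposed symmetry argument for that case is likewise too vague to check. Without the triangle lemma (or an equivalent substitute) both the vertex bound and the two-vertex enumeration remain gaps.
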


\begin{theorem}\label{alternating root class}
Let $W$ be a root class containing an alternating minimal word.
Then there is exactly one distinct alternating minimal word modulo $J$ in $W$; denote this word by $w_0$.
Then $\Gamma(W)$ is one of the following graphs.
\begin{itemize}
\item[(R4)]
\[
	\xymatrix{
		\ar@(d, l)[] \ar@(l, u)[] \ar@(u, r)[] \ar@(r, d)[] [w_0]
	}
\]
\item[(R5)]
\[
	\xymatrix{
		\bullet \ar@/^.6pc/[r] \ar@/^.2pc/[r] & \ar@/^.6pc/[l] \ar@/^.2pc/[l] [w_0] \ar@(u, r)[] \ar@(r, d)[]
	}
\]
\item[(R6)]
\[
	\xymatrix{
		{\bullet} \ar@/^/[dd] \ar@/^/[dr] & & \\
		& [w_0] \ar@/^/[ul] \ar[ul] \ar@/^/[dl] \ar[dl] \\
		{\bullet} \ar@/^/[uu] \ar@/^/[ur]
	}
\]
\item[(R7)]
\[
	\xymatrix{
		{\bullet} \ar@/^/[dd] \ar@/^/[dr] & & {\bullet} \ar@/^/[dd] \ar@/^/[dl] \\
		& [w_0] \ar@/^/[ul] \ar@/^/[ur] \ar@/^/[dr] \ar@/^/[dl] \\
		{\bullet} \ar@/^/[uu] \ar@/^/[ur] & & {\bullet} \ar@/^/[uu] \ar@/^/[ul]
	}
\]

\end{itemize}
\end{theorem}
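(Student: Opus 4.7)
By Theorem~\ref{alternating minimal word}, all four principal one-letter automorphisms $\phi_1 = (\{a\},b)$, $\phi_2 = (\{a\},\overline{b})$, $\phi_3 = (\{b\},a)$, $\phi_4 = (\{b\},\overline{a})$ are level on $w_0$, so $[w_0]$ has outdegree exactly $4$ in $\Gamma(W)$. The first step is to apply Lemma~\ref{cyclic word} together with $(aa)_{w_0} = (bb)_{w_0} = 0$ to compute the subword counts of each image. For instance, $(aa)_{\phi_1(w_0)} = (a\overline{b}a)_{w_0}$ and $(bb)_{\phi_1(w_0)} = (aba)_{w_0}$, with analogous formulas for $\phi_2, \phi_3, \phi_4$. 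From these formulas, $\phi_1(w_0)$ and $\phi_2(w_0)$ are alternating if and only if $(aba)_{w_0} = (a\overline{b}a)_{w_0} = 0$, i.e., if and only if no two consecutive $a$-layer letters of $w_0$ agree. An analogous condition on the $b$-layer governs whether $\phi_3(w_0), \phi_4(w_0)$ are alternating. Hence the four images split into two naturally paired groups $\{\phi_1(w_0), \phi_2(w_0)\}$ and $\{\phi_3(w_0), \phi_4(w_0)\}$, with each group jointly alternating or jointly semi-alternating.

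The next step is to establish two key claims. \emph{First}, when $\phi_i(w_0)$ is alternating, then $\phi_i(w_0) \sim_J w_0$, so the corresponding edge is a loop at $[w_0]$. I would prove this by using the layer-sign condition that forces the image to be alternating to construct an explicit element of $J$ --- typically a sign-flip permutation composed with a cyclic rotation --- that carries $\phi_i(w_0)$ back to $w_0$. \emph{Second}, when both images in a paired group are semi-alternating, they represent the same vertex of $\Gamma(W)$. This is the crux of the argument. The relation $\phi_2 = \pi_b \phi_1 \pi_b^{-1}$ gives $\phi_2(w_0) \sim_J \phi_1(\pi_b(w_0))$, but going further requires comparing $\phi_1(\pi_b(w_0))$ with $\phi_1(w_0)$ and exploiting symmetries of $w_0$ (typically coming from the non-trivial stabilizer of $[w_0]$ in the permutation group, or from the swap-$a$-and-$b$ permutation together with the root-word condition $(ab)_{w_0} = (a\overline{b})_{w_0}$). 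In practice I would proceed by matching subword counts: $\sigma_{ab}(\phi_1(w_0))$ has the same $(aa), (bb), (ab), (a\overline{b})$ counts as $\phi_2(w_0)$, and I would verify that these counts together with the semi-alternating structure determine the word up to $J$.

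Given the two claims, the number of loops at $[w_0]$ is $0$, $2$, or $4$, and non-loop edges occur in double-edge pairs. I would close the graph by analyzing a semi-alternating neighbor $v = \phi_i(w_0)$: using the subword counts of $v$ and Lemma~\ref{level automorphism}, I would show that exactly two principal automorphisms are level on $v$. One of these is a reverse automorphism that returns to $[w_0]$; the other either produces a loop at $[v]$ (yielding R5) or an edge to another semi-alternating vertex $[v']$ that is paired with $[v]$ (yielding R6 if $[v']$ collapses onto another already-present vertex, or R7 otherwise). The four resulting configurations are precisely R4 (four loops at $[w_0]$), R5 (two loops and one neighbor with double edges in each direction), R6 (two neighbors, each connected to $[w_0]$ by a double edge and to each other by a single edge), and R7 (four distinct neighbors, connected pairwise by single edges).

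Uniqueness of $w_0$ modulo $J$ then follows by inspection of R4--R7: in each graph, the alternating vertex $[w_0]$ is the unique vertex of outdegree $4$, since every semi-alternating vertex has outdegree $2$. Any other alternating minimal word of $W$ must therefore coincide with $w_0$ in $\Gamma(W)$. I expect the hardest part to be the pairing claim for semi-alternating images: exhibiting the explicit $J$-element carrying $\phi_1(w_0)$ to $\phi_2(w_0)$ is not immediate from the identity $\phi_2 = \pi_b \phi_1 \pi_b^{-1}$ alone, because $J$ is not normal in $\Aut F_2$ and $\phi_1$ need not preserve $\sim_J$. The R6--R7 distinction further demands examining finer symmetries of $w_0$ to decide whether the two paired semi-alternating vertices (one from each of $\{\phi_1, \phi_2\}$ and $\{\phi_3, \phi_4\}$) coincide modulo $J$ or remain distinct.
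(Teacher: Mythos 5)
Your overall shape --- treat $[w_0]$ as an outdegree-$4$ hub, sort its four images into loops and non-loops, and case on the number of loops --- matches the paper, but one of your load-bearing claims is false and the two facts that actually drive the paper's proof are absent. Your ``second key claim,'' that whenever $\phi_1(w_0)$ and $\phi_2(w_0)$ are not alternating they represent the same vertex, does not hold: in a class of type (R7) (e.g.\ class~8.42) the four images $\phi_i(w_0)$ are pairwise inequivalent modulo $J$, and the claim also contradicts your own later inclusion of (R7) with four distinct neighbors. The correct statement, proved in the paper as Lemmas~\ref{s1 = s2}, \ref{s1 = s3}, and \ref{s1 = s4}, is \emph{conditional}: if one pair of images is identified then the complementary pair is identified in the corresponding way. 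Those conditional lemmas are exactly what rule out the intermediate configurations your case analysis never excludes (for instance two loops together with two \emph{distinct} non-loop neighbors, or exactly three distinct neighbors), and they are what separate (R6) from (R7). Nothing in your proposal replaces them.

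The second missing ingredient is the closure mechanism. You assert that the second level automorphism on a non-alternating neighbor $v = \phi_i(w_0)$ leads to a vertex ``paired with $[v]$'' or to an already-present vertex, but you give no reason the graph cannot keep growing into a longer path or tree. The paper closes the graph with the composition identity of Lemma~\ref{triangle} and Corollary~\ref{triangle corollary}: $[\phi_4\phi_1(w)] = [\phi_3(w)]$ and its permuted variants force the second edge out of $[\phi_1(w_0)]$ to land in $[\phi_3(w_0)]$, again a neighbor of $[w_0]$, and combined with connectivity this shows every vertex is a neighbor of $[w_0]$. Finally, your first claim (an alternating image of $w_0$ yields a loop) is true, but your proposed proof --- constructing an ad hoc element of $J$ from a ``layer-sign condition'' --- is only a gesture; the paper's Lemma~\ref{alternating fixed} proves the stronger statement $\phi(w_0) = w_0$ by showing $\phi$ causes no cancellations, and that exact fixedness (not mere $J$-equivalence) is what makes loops come in inverse pairs and what anchors the uniqueness of the alternating vertex.
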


Moreover, each of the ten graph types in Theorems~\ref{non-root class}--\ref{alternating root class} occurs.
See Appendix~\ref{Table of automorphic conjugacy classes} for examples.
Appendix~\ref{Number of automorphic conjugacy classes of each type} lists the number of automorphic conjugacy classes of each graph type for minimal words of length $n \leq 20$.
Since types (P1)--(P3) come in different sizes, Appendix~\ref{Number of paths of each size} lists the number of paths of each size.
Root classes $W$, on the other hand, have bounded size $|V(\Gamma(W))| \in \{1, 2, 3, 5\}$.

From this classification it follows that, with the exception of the double-looped vertex, one can infer from $\Gamma(W)$ whether $W$ is a root class or a non-root class.
Furthermore, if $W$ is a root class then one can infer from $\Gamma(W)$ whether $W$ contains an alternating minimal word or not.

Before embarking on the proofs, we mention a distinguished root word.

\begin{example}
Let $w_0 = (a b \overline{a} \overline{b})^n$.
The image of $w_0$ under $(\{a\}, b)$ is
\[
	(\{a\}, b)(w_0) = ((a b) b (\overline{b} \overline{a}) \overline{b})^n = (a b \overline{a} \overline{b})^n = w_0.
\]
The other three principal automorphisms map $w_0$ either to $(a b \overline{a} \overline{b})^n$ or $(a \overline{b} \overline{a} b)^n$, so $\Gamma(W)$ is (R4).
In fact every class of type (R4) contains $(a b \overline{a} \overline{b})^n$ for some $n \geq 0$, so there is only one such class for each multiple of $4$.
This can be seen as follows.
If $w_0$ is an alternating minimal word of length $4n$ whose class $W$ has size $|V(\Gamma(W))| = 1$, then for each one-letter automorphism $\phi = (\{y\}, x)$ the word $\phi(w_0)$ lies in $[w_0]$ and is therefore alternating.
By Lemma~\ref{cyclic word} we have $0 = (yy)_{\phi(w_0)} = (y\overline{x}y)_{w_0}$, which means that no letter $y$ occurs two letters away from itself.
It follows that $w_0 \equiv \sigma((a b \overline{a} \overline{b})^n)$ for some permutation $\sigma$.
\end{example}

The following lemma is key to the proofs of Theorems~\ref{non-root class}--\ref{alternating root class}.
Under the condition that $w$ is level under a one-letter automorphism, it provides conditions for $w$ to be level under the other principal one-letter automorphisms.

\begin{lemma}\label{level images}
Suppose $w \in C_2$ such that $(\{y\},x)$ is level on $w$.
Then
\begin{enumerate}
\item[(i)] $(\{y\},\overline{x})$ is level on $w$ if and only if $(yy)_w=0$,
\item[(ii)] $(\{x\},y)$ is level on $w$ if and only if $w$ is a root word, and
\item[(iii)] $(\{x\},\overline{y})$ is level on $w$ if and only if $w$ is an alternating root word.
\end{enumerate}
\end{lemma}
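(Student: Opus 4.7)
The plan is to translate each ``level'' statement into an equality on length-two subword counts via Lemma~\ref{level automorphism}, and then compare it with the equality furnished by the hypothesis $(y\overline{x})_w = (yx)_w + (yy)_w$. Two pieces of bookkeeping will be used throughout: the symmetry $(uv)_w = (vu)_w$ stated in the paper, and the fact that $(u)_w$ is unchanged under $u \mapsto u^{-1}$ (so for instance $(y\overline{x})_w = (x\overline{y})_w$ and $(\overline{b}\overline{b})_w = (bb)_w$). The latter has the side effect that the root word condition is invariant under the choice of ordered pair $(y,x)$ with $y\notin\{x,\overline{x}\}$, so I may state it in the form $|(yx)_w - (y\overline{x})_w| = (xx)_w = (yy)_w$.

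For (i), Lemma~\ref{level automorphism} says $(\{y\},\overline{x})$ is level on $w$ iff $(yx)_w = (y\overline{x})_w + (yy)_w$. Subtracting this from the hypothesis collapses the cross terms and leaves $2(yy)_w = 0$, so the condition is $(yy)_w = 0$. For (ii), the level condition for $(\{x\},y)$ reads $(x\overline{y})_w = (xy)_w + (xx)_w$; rewriting via the two symmetries above it becomes $(y\overline{x})_w = (yx)_w + (xx)_w$. Combined with the hypothesis this is equivalent to $(xx)_w = (yy)_w$. Since the hypothesis already forces $(y\overline{x})_w - (yx)_w = (yy)_w \ge 0$, adjoining $(xx)_w=(yy)_w$ is exactly the root word condition in the general form above.

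For (iii), apply Lemma~\ref{level automorphism} to $(\{x\},\overline{y})$: it is level iff $(xy)_w = (x\overline{y})_w + (xx)_w$, i.e., $(yx)_w = (y\overline{x})_w + (xx)_w$. Adding this to the hypothesis cancels the cross terms again and yields $(xx)_w + (yy)_w = 0$, so both counts vanish; substituting back gives $(yx)_w = (y\overline{x})_w$. Combined, these are precisely the alternating root word condition ($w$ alternating and $|(yx)_w - (y\overline{x})_w| = 0 = (xx)_w = (yy)_w$). The converse is immediate: from $(xx)_w=(yy)_w=0$ and $(yx)_w=(y\overline{x})_w$ the displayed equality holds trivially.

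The arguments are all short algebraic manipulations once the right symmetries are in place, so there is no serious obstacle. The only point that merits a sentence of care is the verification that my ``$(y,x)$-form'' of the root word condition coincides with the official definition stated in terms of $a,b$; this uses only the invariance of $(u)_w$ under the alphabet permutations and under $u \mapsto u^{-1}$.
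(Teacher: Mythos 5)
Your proposal is correct and follows essentially the same route as the paper: translate each level condition through Lemma~\ref{level automorphism}, rewrite using $(uv)_w = (vu)_w$ and the invariance of $(u)_w$ under $u \mapsto u^{-1}$, and combine with the hypothesis $(y\overline{x})_w = (yx)_w + (yy)_w$. The only (harmless) quibble is in (i), where it is \emph{adding} the two equations, not subtracting, that yields $2(yy)_w = 0$ directly; your explicit remark that the root word condition is invariant under replacing $(a,b)$ by any admissible pair $(y,x)$ is a point the paper leaves implicit.
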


\begin{proof}
Since $(\{y\},x)$ is level on $w$, we have
\begin{equation}\label{maltese}
	(y\overline{x})_w = (yx)_w + (yy)_w
\end{equation}
by Lemma~\ref{level automorphism}.
We use this equation frequently in the following.

By Lemma~\ref{level automorphism}, $(\{y\},\overline{x})$ being level on $w$ is equivalent to $(yx)_w = (y\overline{x})_w + (yy)_w$.
Adding this equation to Equation~\eqref{maltese} shows that it is equivalent to $(yy)_w=0$.
This proves~(i).

By Lemma~\ref{level automorphism}, $(\{x\},y)$ being level on $w$ is equivalent to $(x\overline{y})_w=(xy)_w+(xx)_w$, which is equivalent to $(y\overline{x})_w=(yx)_w+(xx)_w$.
Subtracting this from Equation~\eqref{maltese} shows that it is equivalent to $(xx)_w=(yy)_w$, which is equivalent to $w$ being a root word since we also have $(y\overline{x})_w-(yx)_w=(yy)_w$ from Equation~\eqref{maltese}.
This proves~(ii).

Again by Lemma~\ref{level automorphism}, $(\{x\},\overline{y})$ being level on $w$ is equivalent to $(xy)_w=(x\overline{y})_w+(xx)_w$, which is equivalent to $(yx)_w=(y\overline{x})_w+(xx)_w$.
Adding this to Equation~\eqref{maltese} shows that it is equivalent to $0 = (xx)_w + (yy)_w$, which is equivalent to $0 = (xx)_w = (yy)_w = (yx)_w - (y\overline{x})_w$, which is equivalent to $w$ being an alternating root word, giving~(iii).
\end{proof}

Lemma~\ref{level images} already provides enough information to restrict the outdegrees of root word vertices and non-root word vertices.

\begin{corollary}\label{outdegrees}
If $w \in C_2$ is a minimal word that is not a root word, then $\textnormal{outdegree}([w]) \in \{0, 1, 2\}$.
If $w \in C_2$ is a root word, then $\textnormal{outdegree}([w]) \in \{2, 4\}$.
\end{corollary}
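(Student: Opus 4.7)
The plan rests on the observation that, because $w$ is minimal, a principal one-letter automorphism $\phi$ contributes an edge from $[w]$ in $\Gamma(W)$ if and only if $\phi$ is level on $w$: a non-level principal satisfies $|\phi(w)|>|w|$ and so sends $w$ to a non-minimal word, while a level principal preserves length and thus sends $w$ to a minimal word of length $n$. Since the four principal automorphisms are pairwise inequivalent modulo $\Inn F_2$, the outdegree of $[w]$ equals exactly the number of principal automorphisms that are level on $w$, a number in $\{0,1,2,3,4\}$. Both claims therefore reduce to counting level principals, after which the remainder of the argument simply reads off the relevant information from Lemma~\ref{level images}.

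If $w$ is not a root word, I would argue as follows. Either no principal is level, giving outdegree $0$, or some $(\{y\},x)$ is; in the latter case Lemma~\ref{level images}(ii) rules out $(\{x\},y)$ being level because $w$ is not a root word, and part~(iii) rules out $(\{x\},\overline{y})$ being level because $w$ is a fortiori not an alternating root word. The only remaining candidate is $(\{y\},\overline{x})$, which by Lemma~\ref{level images}(i) is level precisely when $(yy)_w=0$. Hence the outdegree lies in $\{1,2\}$.

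If $w$ is a root word, I first need to exhibit at least one level principal. Expanding the four level conditions via Lemma~\ref{level automorphism} and using the identity $(xy)_w=(yx)_w$, they become $(ab)_w-(a\overline{b})_w=\pm(aa)_w$ and $(ab)_w-(a\overline{b})_w=\pm(bb)_w$, so the root-word equality $|(ab)_w-(a\overline{b})_w|=(aa)_w=(bb)_w$ forces at least one of them to hold. Given such a level $(\{y\},x)$, Lemma~\ref{level images}(ii) then produces $(\{x\},y)$ as a second level principal, so the outdegree is at least $2$. If $w$ is additionally alternating, Theorem~\ref{alternating minimal word} gives that all four principals are level, so the outdegree is $4$; if $w$ is a non-alternating root word, then $(aa)_w=(bb)_w>0$, so Lemma~\ref{level images}(i) excludes $(\{y\},\overline{x})$ (because $(yy)_w>0$) while part~(iii) excludes $(\{x\},\overline{y})$, pinning the outdegree at exactly $2$.

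The only point not immediate from Lemma~\ref{level images} is the initial production of a level principal when $w$ is a root word; this amounts to the short sign analysis of $(ab)_w-(a\overline{b})_w$ indicated above. I do not expect any further obstacle.
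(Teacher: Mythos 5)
Your proof is correct and takes essentially the same approach as the paper: both arguments reduce the outdegree to the number of level principal automorphisms and then constrain that count via Lemma~\ref{level images}, Lemma~\ref{level automorphism}, and Theorem~\ref{alternating minimal word}, including the same sign analysis of $(ab)_w - (a\overline{b})_w$ to produce a first level automorphism on a root word. The only difference is organizational — you enumerate the cases directly where the paper rules out outdegree $3$ by a single contradiction — and along the way you pin down the sharper fact (outdegree exactly $2$ for non-alternating root words) that the paper defers to Section~\ref{Root classes}.
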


\begin{proof}
We have already established that by definition of $\Gamma(W)$ the outdegree of $[w]$ is at most $4$.
Suppose toward a contradiction that the outdegree of $[w]$ is $3$.
Let $(\{y\},x)$ be an automorphism that is level on $w$.
Since the outdegree of each alternating root word is $4$, $w$ is not an alternating root word.
By Lemma~\ref{level images}, the automorphism $(\{x\},\overline{y})$ is therefore not level on $w$, so the other two automorphisms $(\{y\},\overline{x})$ and $(\{x\},y)$ are level on $w$.
By Lemma~\ref{level images}, $(yy)_w = 0$ and $w$ is a root word.
Therefore $(xx)_w = 0$, but this implies that $w$ is alternating and hence an alternating root word, which is a contradiction.
Hence the outdegree of $[w]$ is not $3$.

By Lemma~\ref{level images}, if $w$ is not a root word then additionally the outdegree is not $4$, and if $w$ is a root word then additionally the outdegree is not $1$.

It remains to show that if $w$ is a root word then the outdegree of $[w]$ is at least $1$.
By definition, $w$ is a root word if and only if $|(ab)_w - (a\overline{b})_w| = (aa)_w = (bb)_w$, in which case $(ab)_w - (a\overline{b})_w = (aa)_w$ (and $(\{a\}, \overline{b})$ is level on $w$ by Lemma~\ref{level automorphism}) or $(a\overline{b})_w - (ab)_w = (aa)_w$ (and $(\{a\}, b)$ is level on $w$).
\end{proof}

\section{Non-root classes}\label{Non-root classes}

In this section we prove Theorem~\ref{non-root class} and Theorem~\ref{automorphic conjugacy class size}.
For the duration of this section, fix $x, y \in L_2$ such that $y \notin \{x, \overline{x}\}$.
We say that a word $w$ is \emph{semi-alternating} if $(yy)_w = 0$.
We split the proof of Theorem~\ref{non-root class} into two cases depending on whether the automorphic conjugacy class contains a semi-alternating minimal word.

\begin{lemma}\label{no semi-alternating word}
Let $W$ be a non-root class that contains no semi-alternating minimal word.
Then $\Gamma(W)$ is one of the following graphs.
\begin{itemize}
\item a (P1) path on two vertices
\item a degenerate (P1) path (a single vertex with no edges)
\item a degenerate (P2) path (a single vertex with one loop)
\end{itemize}
\end{lemma}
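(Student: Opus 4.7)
The plan is to show that under the hypothesis every vertex of $\Gamma(W)$ has outdegree at most $1$, and then use the automatic pairing of edges together with the Whitehead-connectivity of $\Gamma(W)$ to force it into one of the three listed shapes.

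First I would bound the outdegree. Fix a minimal representative $w$ of an arbitrary vertex $[w]$. Corollary~\ref{outdegrees} already gives outdegree at most $2$ since $W$ is a non-root class, so it suffices to rule out the simultaneous levelness of two distinct principal one-letter automorphisms on $w$. The four principal automorphisms split into the ``$y$-pair'' $\{(\{y\},x),(\{y\},\overline{x})\}$ and the ``$x$-pair'' $\{(\{x\},y),(\{x\},\overline{y})\}$, so there are three kinds of pairs to exclude. If both level automorphisms lie in the $y$-pair, Lemma~\ref{level images}(i) forces $(yy)_w=0$, contradicting the hypothesis that $w$ is not semi-alternating. If both lie in the $x$-pair, the same statement --- applied with the roles of $x$ and $y$ interchanged, which is valid since $x\notin\{y,\overline{y}\}$ --- forces $(xx)_w=0$, and the hypothesis is meant to rule this out too. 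Finally, if one level automorphism lies in each pair, parts~(ii) and~(iii) of Lemma~\ref{level images} force $w$ to be a root word, contradicting non-rootness.

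Once outdegree is bounded by $1$ at every vertex, indegree is also at most $1$ by the edge pairing: each edge $[w]\to[v]$ via $[\phi]$ is matched by $[v]\to[w]$ via $[\phi^{-1}]$. A loop on $[w]$ therefore consumes $[w]$'s unique outgoing edge and precludes any edge to another vertex, while an edge from $[w]$ to a distinct vertex $[v]$ is matched by its reverse, which must be the unique outgoing edge of $[v]$. Together with the connectivity of $\Gamma(W)$, this forces $\Gamma(W)$ to be exactly one of (a) a single isolated vertex (the degenerate (P1)), (b) a single vertex carrying one loop (the degenerate (P2)), or (c) two vertices joined by one edge in each direction (the (P1) path on two vertices).

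The main technical obstacle is the ``$x$-pair'' subcase of the outdegree argument: a literal reading of the hypothesis only constrains $(yy)_w$, so closing that case requires the symmetric strengthening that every minimal word $w\in W$ has both $(aa)_w>0$ and $(bb)_w>0$. This is compatible with the hypothesis provided we permit ourselves to relabel the pair $(x,y)$ at the start of the section so that any minimal word of $W$ with $(zz)_w=0$ for some $z\in\{a,b\}$ has $z=y$; under this choice, the absence of a semi-alternating minimal word indeed rules out $(xx)_w=0$ as well, closing the argument.
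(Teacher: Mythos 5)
Your proof is correct and follows essentially the same route as the paper's: Lemma~\ref{level images} forces every vertex to have outdegree at most $1$, and the pairing of each edge with its reverse plus the connectivity of $\Gamma(W)$ then leaves only the three listed shapes. The symmetry issue you flag at the end is genuine but resolves exactly as you suggest --- the paper's case split on whether $W$ contains a semi-alternating minimal word implicitly lets the pair $(x,y)$ be chosen per class, so the hypothesis really does yield $(aa)_w>0$ and $(bb)_w>0$ for every minimal $w\in W$, which is what the terse appeal to Lemma~\ref{level images} in the paper's proof silently uses.
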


\begin{proof}
By Lemma~\ref{level images}, every vertex in $\Gamma(W)$ has outdegree at most $1$.
On the other hand, if there is an edge $[w] \to [v]$ then there is an edge $[v] \to [w]$.
Since $\Gamma(W)$ is connected, it follows that $\Gamma(W)$ contains at most $2$ vertices.
If there are $2$ vertices, then $\Gamma(W)$ is a simple path on $2$ vertices.
If there is a single vertex, it can have either one loop or no loops.
\end{proof}

Each of the three possible outcomes in Lemma~\ref{no semi-alternating word} occurs.
One can find examples among words of length $9$.

For $w \in C_2$, define $m_x(w) = \min \{i \geq 0 : (y x^{i} y)_w \geq 1\}$.
Similarly, define $m_{\overline{x}}(w) = \min \{i \geq 0 : (y \overline{x}^{i} y)_w \geq 1\}$.
We adopt the usual convention that $\min \varnothing = \infty$.
Therefore if $(y x^i y)_w = 0$ for all $i \geq 0$ then $m_x(w) = \infty$, for example.
The quantity $m_x(w)$ is a measure of the ``semi-alternatingness'' of $w$.
If $m_x(w) = 0$ then $w$ is not semi-alternating.
If $m_x(w) \geq 1$ then $w$ is semi-alternating and remains so under $m_x(w) - 1$ applications of $(\{y\}, \overline{x})$.

\begin{lemma}\label{m nonzero}
If $w$ is a minimal word, then $1 \leq m_{x}(w) < \infty$ if and only if $1 \leq m_{\overline{x}}(w) < \infty$.
\end{lemma}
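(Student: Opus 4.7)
The plan is to split into cases based on $(yy)_w$ and on whether $w$ contains a letter from $\{y, \overline{y}\}$. If $(yy)_w \geq 1$ then $(yx^0y)_w \geq 1$, so $m_x(w) = m_{\overline{x}}(w) = 0$ and both sides of the biconditional fail. If $(yy)_w = 0$ and $w$ uses only letters from $\{x, \overline{x}\}$, then $(yx^iy)_w = (y\overline{x}^iy)_w = 0$ for all $i \geq 0$, so $m_x(w) = m_{\overline{x}}(w) = \infty$ and both sides again fail. So the work is in the remaining case.

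In the remaining case, $(yy)_w = 0$ combined with the no-adjacent-inverse rule of $C_2$ forces any two cyclically consecutive occurrences of letters from $\{y, \overline{y}\}$ to be separated by a nonempty run over $\{x, \overline{x}\}$, and such a run must be a pure power $x^i$ or $\overline{x}^i$ (else $x$ and $\overline{x}$ would be adjacent). Hence $w$ admits a canonical cyclic decomposition $c_1 B_1 c_2 B_2 \cdots c_k B_k$ with $c_j \in \{y, \overline{y}\}$ and $B_j \in \{x^{i_j}, \overline{x}^{i_j}\}$, $i_j \geq 1$. Encoding each $c_j$ and $B_j$ by a sign ($+$ for $y$ or $x^i$, $-$ for $\overline{y}$ or $\overline{x}^i$), and letting $p_{\epsilon\delta\epsilon'}$ be the number of $j$ with signature $(\epsilon, \delta, \epsilon') = (\mathrm{sgn}(c_j), \mathrm{sgn}(B_j), \mathrm{sgn}(c_{j+1}))$, one reads off directly from the decomposition that
\[
	1 \leq m_x(w) < \infty \iff p_{+++} + p_{---} \geq 1, \qquad 1 \leq m_{\overline{x}}(w) < \infty \iff p_{+-+} + p_{-+-} \geq 1,
\]
since the two signatures in each case produce the subword $yx^iy$ and its inverse $\overline{y}\overline{x}^i\overline{y}$ (respectively $y\overline{x}^iy$ and its inverse $\overline{y}x^i\overline{y}$).

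Minimality of $w$ enters at the end. After applying a permutation $\pi$ with $\pi(y) = a$ and $\pi(x) = b$, Theorem~\ref{minimality} applied to $\pi(w)$ translates into $|(yx)_w - (y\overline{x})_w| \leq \min((yy)_w, (xx)_w) = 0$, so $(yx)_w = (y\overline{x})_w$. I would then expand both sides in terms of the $p_{\epsilon\delta\epsilon'}$ by classifying which transitions produce $yx, \overline{x}\overline{y}, y\overline{x}$, and $x\overline{y}$ at the $c_j/B_j$ and $B_j/c_{j+1}$ boundaries; the shared contributions $p_{++-}$ and $p_{+--}$ cancel, and the equation collapses to $p_{+++} + p_{---} = p_{+-+} + p_{-+-}$. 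Since these nonnegative integers are equal, one is $\geq 1$ if and only if the other is, which is exactly the desired biconditional. The main obstacle is the boundary bookkeeping in this last expansion, but it is mechanical once the block decomposition is in place.
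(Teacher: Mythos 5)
Your argument is correct and essentially complete, but it follows a genuinely different route from the paper's. The paper's proof is dynamic: it applies the automorphism $\phi=(\{y\},x)$ and tracks how the lengths of the segments between consecutive occurrences of $y^{\pm1}$ change --- segments $y^{\pm1}x^iy^{\mp1}$ are unaffected, the witnessing segment $y\,\overline{x}^{\,m_{\overline{x}}(w)}y$ shrinks, and only segments $yx^jy$ grow --- so minimality of $w$ forces some $yx^jy$ to be present, and $(yy)_w=0$ gives $j\geq 1$. Your proof is static: you extract the exact identity $(yx)_w=(y\overline{x})_w$ from Theorem~\ref{minimality} (valid, since $(yy)_w=0$ makes the right-hand side zero) and let the block decomposition convert it into the equality $p_{+++}+p_{---}=p_{+-+}+p_{-+-}$; your boundary bookkeeping does check out (the cross terms $p_{++-}$ and $p_{+--}$ appear on both sides and cancel). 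What your version buys is a sharper quantitative statement --- the number of same-sign-separated $y$-pairs equals the number of opposite-sign-separated ones, not merely that one collection is nonempty iff the other is --- at the cost of setting up the decomposition; the paper's version is shorter and reuses the length-change formulas (in particular Equation~\eqref{minus 1}) that it needs again in Lemma~\ref{semi-alternating at least 1}. The only gap is degenerate: a word of length at most $1$ containing a letter of $\{y,\overline{y}\}$ falls into none of your three cases (no block $B_1$ exists), but for such a word $m_x(w)=m_{\overline{x}}(w)=\infty$ by the convention $(u)_w=0$ for $|u|>|w|$, so the biconditional holds vacuously.
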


\begin{proof}
Consider the one-letter automorphism $\phi = (\{y\}, x)$, which maps $y \mapsto y x$.
This automorphism does not change the distance between $y$ and $\overline{y}$ separated by $x^i$ or $\overline{x}^i$, since for all $i \geq 0$
\begin{align*}
	\phi(y x^i \overline{y}) &= y x^i \overline{y} \\
	\phi(\overline{y} x^i y) &= \overline{x} \overline{y} x^i y x
\end{align*}
and analogously for the inverses of these two words.
On the other hand, $\phi$ does change the distance between a pair of $y$s or a pair of $\overline{y}$s separated by $x^i$ or $\overline{x}^i$, since for all $i \in \mathbb{Z}$
\begin{equation}
	\phi(y x^i y) = y x^{i+1} y x
	\label{minus 1}
\end{equation}
(and analogously for the inverse $\overline{y} x^{-i} \overline{y}$).

Suppose $1 \leq m_{\overline{x}}(w) < \infty$.
Since $w$ is minimal, the image of $w$ under $\phi$ has length at least $|w|$.
Since $\phi$ decreases the distance between the two $y$s in $y \overline{x}^{m_{\overline{x}}(w)} y$ (or the two $\overline{y}$s in $\overline{y} x^{m_{\overline{x}}(w)} \overline{y}$) in $w$, it follows that $\phi$ increases the distance between another pair of $y$s or $\overline{y}$s in $w$.
This can only happen for $y x^j y$ or its inverse for some $j \geq 0$, and since $(yy)_w = 0$ we have $1 \leq m_x(w) < \infty$.

A symmetric argument with the automorphism $(\{y\}, \overline{x})$ shows that if $1 \leq m_x(w) < \infty$ then $1 \leq m_{\overline{x}}(w) < \infty$.
\end{proof}

Since $m_x(w) = 0$ if and only if $m_{\overline{x}}(w) = 0$, it follows from Lemma~\ref{m nonzero} that $m_x(w) = \infty$ if and only if $m_{\overline{x}}(w) = \infty$.

Having proven Lemma~\ref{no semi-alternating word}, it remains to prove Theorem~\ref{non-root class} for classes containing a semi-alternating minimal word.
Lemmas~\ref{semi-alternating infinity} and \ref{semi-alternating at least 1} address the cases $m_x(w) = \infty$ and $1 \leq m_x(w) < \infty$ for the semi-alternating word $w$.
The following lemma shows that a vertex containing a semi-alternating word has outdegree at least $2$.

\begin{lemma}\label{semi-alternating outdegree}
Let $w$ be a semi-alternating minimal word of length $|w| \geq 2$.
Then $(\{y\}, x)$ and $(\{y\}, \overline{x})$ are level on $w$.
\end{lemma}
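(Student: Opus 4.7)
The plan is to translate both ``level'' conditions into subword-count equations via Lemma~\ref{level automorphism}, observe that they collapse to a single equation under the semi-alternating hypothesis, and then derive that equation from minimality.

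First I would apply Lemma~\ref{level automorphism} twice. For $(\{y\}, x)$ it gives the level condition $(y\overline{x})_w = (yx)_w + (yy)_w$. For $(\{y\}, \overline{x})$ I would apply the same lemma with $x$ replaced by $\overline{x}$; the hypothesis $y \notin \{x, \overline{x}\}$ is preserved under this substitution, so we obtain the level condition $(yx)_w = (y\overline{x})_w + (yy)_w$. The semi-alternating hypothesis $(yy)_w = 0$ makes both conditions read as $(yx)_w = (y\overline{x})_w$, so it suffices to establish this single equality.

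Next I would use minimality. By Corollary~\ref{decrease the length}, neither $(\{y\}, x)$ nor $(\{y\}, \overline{x})$ decreases $|w|$. The remark preceding Lemma~\ref{level automorphism} interprets levelness as a balance between the additions and cancellations caused by the automorphism, so non-decrease of length corresponds to additions $\geq$ cancellations, yielding
\[
	(yx)_w + (yy)_w \geq (y\overline{x})_w
	\quad\text{and}\quad
	(y\overline{x})_w + (yy)_w \geq (yx)_w.
\]
Imposing $(yy)_w = 0$ collapses these into $(yx)_w \geq (y\overline{x})_w \geq (yx)_w$, so $(yx)_w = (y\overline{x})_w$ as required.

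The main subtle point will be to pin down the direction of the inequality when translating minimality through the remark preceding Lemma~\ref{level automorphism}: namely, that an excess of cancellations over additions forces a strict length decrease, so that non-decrease really does give additions $\geq$ cancellations. This is essentially a matter of bookkeeping how the one-letter automorphism acts on the two-letter subwords of $w$, and the hypothesis $|w| \geq 2$ in the statement is exactly what is required so that Lemma~\ref{level automorphism} applies without the length-$1$ degeneracy flagged in the paper.
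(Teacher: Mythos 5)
Your proof is correct and is essentially the paper's argument: both rest on the observation that minimality forces $(yx)_w + (yy)_w \geq (y\overline{x})_w$ and $(y\overline{x})_w + (yy)_w \geq (yx)_w$, which with $(yy)_w = 0$ sandwiches $(yx)_w - (y\overline{x})_w$ to zero. The only cosmetic difference is that the paper runs this by contradiction with strict inequalities (and then invokes Lemma~\ref{level images}(i) to get the second automorphism), whereas you derive both level conditions directly from the weak inequalities.
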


\begin{proof}
Toward a contradiction, assume that neither $(\{y\}, x)$ nor $(\{y\}, \overline{x})$ is level on $w$.
If $\phi = (\{y\}, x)$ increases the length of $w$, then $\phi$ causes more additions than cancellations in $w$; as in Lemma~\ref{level automorphism}, this implies $(y\overline{x})_w < (yx)_w + (yy)_w$.
Symmetrically, $|(\{y\}, \overline{x})(w)| > |w|$ implies $(yx)_w < (y\overline{x})_w + (yy)_w$.
It follows that $-(yy)_w < (yx)_w - (y\overline{x})_w < (yy)_w$, so $(yy)_w \neq 0$, contradicting the assumption that $w$ is semi-alternating.
Therefore $(\{y\}, x)$ or $(\{y\}, \overline{x})$ is level on $w$.
By Lemma~\ref{level images}, both are.
\end{proof}

\begin{lemma}\label{semi-alternating infinity}
Let $W$ be a non-root class containing a minimal word $w$ such that $m_x(w) = \infty$.
Then $\Gamma(W)$ is a degenerate (P3) path (a single vertex with two loops).
\end{lemma}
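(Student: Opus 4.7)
The plan is to exploit the rigid structure forced by $m_x(w) = \infty$ to show that both principal automorphisms $(\{y\},x)$ and $(\{y\},\overline{x})$ leave $w$ fixed (producing two loops at $[w]$), while non-rootness rules out the remaining two principal automorphisms.

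First, I would observe that $m_x(w) = \infty$ implies $(yy)_w = (yx^0y)_w = 0$, so $w$ is semi-alternating, and by the remark following Lemma~\ref{m nonzero}, $m_{\overline{x}}(w) = \infty$ as well. Thus $(yx^i y)_w = 0 = (y\overline{x}^i y)_w$ for every $i \geq 0$. From this I would deduce a structural normal form for $w$: between any two cyclically consecutive occurrences of $y$ in $w$, the intervening letters form a reduced word in $\{x,\overline{x}\}$, hence are of the form $x^i$ or $\overline{x}^i$, which is ruled out by the vanishing counts. So between consecutive $y$s there must be a $\overline{y}$, and symmetrically; consequently either $w$ contains no letter from $\{y,\overline{y}\}$ (in which case $w = x^n$ or $w = \overline{x}^n$), or $w$ can be written cyclically as
\[
	w = y x^{a_1} \overline{y} x^{a_2} y x^{a_3} \overline{y} \cdots y x^{a_{2m-1}} \overline{y} x^{a_{2m}}
\]
with $y$ and $\overline{y}$ strictly alternating and each $a_i \in \mathbb{Z} \setminus \{0\}$.

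Next I would check by direct computation that $\phi(w) = w$ for $\phi = (\{y\},x)$ and also for $\phi^{-1} = (\{y\},\overline{x})$. In the $w = x^n$ case this is immediate since $\phi$ fixes both $x$ and $\overline{x}$. In the alternating case, applying $\phi$ letter by letter replaces each $y x^{a_i} \overline{y}$ block with $(yx)(x^{a_i})(\overline{x}\overline{y})$, which telescopes to $y x^{a_i} \overline{y}$ (the added $x$ after $y$ and $\overline{x}$ before $\overline{y}$ collapse through the intervening $x^{a_i}$), while the other blocks $x^{a_j}$ between $\overline{y}$ and $y$ are untouched. The analogous computation with $\phi^{-1}$ gives $\phi^{-1}(w) = w$. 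Hence both level automorphisms contribute loops at $[w]$ in $\Gamma(W)$.

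Finally, since $W$ is a non-root class, $w$ is not a root word, so applying Lemma~\ref{level images}(ii) and (iii) to the level automorphism $(\{y\},x)$ shows that neither $(\{x\},y)$ nor $(\{x\},\overline{y})$ is level on $w$. Therefore the outdegree of $[w]$ is exactly $2$, realized entirely by the two loops above, so $[w]$ has no edge to any other vertex. Connectedness of $\Gamma(W)$ then forces $V(\Gamma(W)) = \{[w]\}$, and $\Gamma(W)$ is a single vertex with two loops, a degenerate (P3) path. The main obstacle is the structural step: verifying that $m_x(w) = m_{\overline{x}}(w) = \infty$ really does enforce the clean alternating normal form above, which is what makes the telescoping reduction $(yx)(x^{a_i})(\overline{x}\overline{y}) = y x^{a_i}\overline{y}$ valid uniformly in every block.
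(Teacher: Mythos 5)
Your proposal is correct and follows essentially the same route as the paper: derive the strictly alternating $y/\overline{y}$ block structure from $m_x(w)=m_{\overline{x}}(w)=\infty$, verify by the telescoping computation that $(\{y\},x)$ and $(\{y\},\overline{x})$ fix $w$ (two loops), and use Lemma~\ref{level images} together with non-rootness to exclude the other two principal automorphisms. The only cosmetic difference is that the paper also records a direct subword-count verification that $(xx)_w=0$ would force $w$ to be an alternating root word, but this adds nothing beyond your appeal to Lemma~\ref{level images}(ii)--(iii).
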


\begin{proof}
By Lemma~\ref{semi-alternating outdegree}, $\phi = (\{y\}, x)$ and $\phi^{-1} = (\{y\}, \overline{x})$ are level on $w$.
By Lemma~\ref{level images}, $\phi$ and $\phi^{-1}$ are the only one-letter automorphisms that are level on $w$.
Since $m_{x}(w) = \infty$ and $m_{\overline{x}}(w) = \infty$, $w$ consists of overlapping subwords of the form $y^e x^i y^{-e}$ for $e \in \{1, -1\}$ and $i \in \mathbb{Z} \setminus \{0\}$.
Since the distance between $y^e$ and $y^{-e}$ is fixed by $\phi$ and by $\phi^{-1}$, $w$ is fixed by $\phi$ and by $\phi^{-1}$, so $[w]$ has two loops.

Suppose that $(\{x\}, y)$ is level on $w$.
By Lemma~\ref{level automorphism}, $(xx)_w = (x \overline{y})_w - (xy)_w = (x \overline{y})_w - (yx)_w$.
This difference is equal to $0$ since $m_x(w) = \infty$ implies that the subwords $x\overline{y}$ and $yx$ occur in pairs in $w$ and similarly the subwords $y\overline{x}$ and $\overline{x}\overline{y}$ occur in pairs.
But $(xx)_w = 0$ implies that $w$ is an alternating minimal word and hence a root word by Theorem~\ref{alternating minimal word}, contradicting one of our assumptions.
Therefore $(\{x\}, y)$ is not level on $w$.
Similarly, $(\{x\}, \overline{y}) = (\{x\}, y)^{-1}$ is not level on $w$.
\end{proof}

We use the following result in the proof of Lemma~\ref{semi-alternating at least 1}.

\begin{lemma}\label{semi-alternating at least 1 assist}
Let $w$ be a minimal word such that $1 \leq m_x(w) < \infty$ and $\phi = (\{y\}, x)$ is level on $w$.
Then $\phi$ is level on $\phi(w)$ if and only if $2 \leq m_{\overline{x}}(w) < \infty$.
\end{lemma}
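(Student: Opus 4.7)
The plan is to reduce this to a combination of Lemma~\ref{cyclic word}, Lemma~\ref{level images}(i), and Lemma~\ref{semi-alternating outdegree}, after first establishing that $\phi(w)$ is itself a minimal semi-alternating word of the same cyclic length as $w$ precisely when $m_{\overline{x}}(w) \geq 2$.

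First I would peel off the ``$m_{\overline{x}}(w) < \infty$'' half of the conclusion: the hypothesis $1 \leq m_x(w) < \infty$ gives $1 \leq m_{\overline{x}}(w) < \infty$ by Lemma~\ref{m nonzero}, so proving ``$\phi$ is level on $\phi(w)$ iff $m_{\overline{x}}(w) \geq 2$'' suffices. Unpacking the definition, $m_{\overline{x}}(w) \geq 2$ is equivalent to $(y\overline{x}y)_w = 0$ given the already-established $(yy)_w = 0$.

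Next I would set up the ``one-letter subword dictionary'' between $w$ and $\phi(w)$. By Lemma~\ref{cyclic word}, $(yy)_{\phi(w)} = (y\overline{x}y)_w$, so $\phi(w)$ is semi-alternating if and only if $m_{\overline{x}}(w) \geq 2$. So the lemma reduces to the assertion: $\phi$ is level on $\phi(w)$ iff $\phi(w)$ is semi-alternating. To apply the machinery to $\phi(w)$ I first note that $\phi(w)$ is minimal: if some automorphism shortened $\phi(w)$ below the cyclic length $|w|$, composing with $\phi$ would contradict minimality of $w$. Also $|\phi(w)| = |w| \geq 3$ because $m_x(w) \geq 1$ forces a subword of the form $yx^i y$ in $w$.

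For the forward direction, assume $\phi$ is level on $\phi(w)$. Since $\phi^{-1}(\phi(w)) = w$ and $\phi$ is level on $w$, $\phi^{-1}$ is automatically level on $\phi(w)$; Lemma~\ref{level images}(i) applied with base word $\phi(w)$ then forces $(yy)_{\phi(w)} = 0$, i.e., $\phi(w)$ is semi-alternating. For the converse, if $\phi(w)$ is semi-alternating, apply Lemma~\ref{semi-alternating outdegree} to the minimal semi-alternating word $\phi(w)$ to conclude that both $(\{y\}, x) = \phi$ and $(\{y\}, \overline{x}) = \phi^{-1}$ are level on $\phi(w)$. Combining the two directions with the dictionary from the previous paragraph gives the equivalence.

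The only real obstacle is the bookkeeping in direction one: verifying that Lemma~\ref{level images}(i) can legitimately be re-invoked with $\phi(w)$ in place of $w$. This requires noting the minimality of $\phi(w)$ and, more importantly, recognising that $\phi^{-1}$ is level on $\phi(w)$ by virtue of $\phi$ being level on $w$. Once that is in hand the rest is a short chain of equivalences, so no delicate computation is required.
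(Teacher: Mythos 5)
Your proposal is correct and follows essentially the same route as the paper: observe that $\phi^{-1}$ is automatically level on $\phi(w)$, apply Lemma~\ref{level images}(i) to reduce levelness of $\phi$ on $\phi(w)$ to $(yy)_{\phi(w)}=0$, translate via Lemma~\ref{cyclic word} to $(y\overline{x}y)_w=0$, and identify that with $2 \leq m_{\overline{x}}(w) < \infty$ using Lemma~\ref{m nonzero}. The extra appeals to minimality of $\phi(w)$ and to Lemma~\ref{semi-alternating outdegree} for the converse are harmless but redundant, since Lemma~\ref{level images}(i) already gives the biconditional.
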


\begin{proof}
Since $\phi^{-1}$ is level on $\phi(w)$, we see by Lemma~\ref{level images} that $\phi$ is level on $\phi(w)$ if and only if $(yy)_{\phi(w)} = 0$.
By Lemma~\ref{cyclic word}, $(yy)_{\phi(w)} = (y \overline{x} y)_w$.
Since $1 \leq m_{\overline{x}}(w) < \infty$ by assumption, $(y \overline{x} y)_w = 0$ if and only if $2 \leq m_{\overline{x}}(w) < \infty$.
\end{proof}

\begin{lemma}\label{semi-alternating at least 1}
Let $W$ be a non-root class containing a minimal word $w$ such that $1 \leq m_{x}(w) < \infty$.
Then $\Gamma(W)$ is a (P1), (P2), or (P3) path with at least $2$ vertices.
\end{lemma}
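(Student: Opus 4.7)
Plan: Set $\alpha := m_x(w)$ and $\beta := m_{\overline{x}}(w)$; the hypothesis together with Lemma~\ref{m nonzero} gives $1 \leq \alpha, \beta < \infty$. Write $\phi = (\{y\},x)$ and define $w_i := \phi^i(w)$ as a cyclic word for $-\alpha \leq i \leq \beta$. The first task is to establish that this unfolded chain consists of minimal words with $\phi$ level exactly on $w_{-\alpha}, \dots, w_{\beta-1}$ and $\phi^{-1}$ level exactly on $w_{-\alpha+1}, \dots, w_\beta$. A direct analysis of how $\phi$ acts on each subword of the form $y c^k y'$ (where $c \in \{x,\overline{x}\}$ and $y' \in \{y,\overline{y}\}$) shows $m_x(w_i) = \alpha + i$ and $m_{\overline{x}}(w_i) = \beta - i$ throughout the semi-alternating range, and then Lemma~\ref{semi-alternating at least 1 assist} (together with its $\phi^{-1}$-analogue) yields the levelness claims; minimality of each $w_i$ follows because $\phi$-levelness preserves cyclic length.

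Next I determine outdegrees. At each interior vertex $[w_i]$ (with $-\alpha < i < \beta$) both $\phi$ and $\phi^{-1}$ are level, and since $W$ is a non-root class, Lemma~\ref{level images}(ii,iii) excludes levelness of $(\{x\},y)$ and $(\{x\},\overline{y})$, giving outdegree exactly $2$. At $w_\beta$ only $\phi^{-1}$ is level; applying Lemma~\ref{level images} with $(\{y\},\overline{x})$ playing the role of $(\{y\},x)$ rules out levelness of the other three principal automorphisms (each condition would force $w_\beta$ to be semi-alternating, a root word, or an alternating root word, none of which hold), so $[w_\beta]$ has outdegree $1$, and symmetrically for $[w_{-\alpha}]$. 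For exhaustiveness, Whitehead's theorem (combined with Lemma~\ref{permutation} to push the final permutation out) expresses any minimal $v \in W$ as the image of $w$ under a sequence of level one-letter automorphisms followed by a permutation; since the only level one-letter automorphism classes available at any $[w_i]$ are $[\phi]$ and $[\phi^{-1}]$, the walk stays inside $\{[w_{-\alpha}], \dots, [w_\beta]\}$ and so these are all the vertices of $\Gamma(W)$.

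Finally I classify identifications $[w_i] = [w_j]$ with $i \neq j$, equivalently $\pi(w_i) \equiv w_j$ for some permutation $\pi$. Because each interior $w_i$ satisfies $(yy)_{w_i} = 0$ but $(xx)_{w_i} > 0$, such a $\pi$ must preserve $\{y, \overline{y}\}$ (and hence $\{x, \overline{x}\}$), leaving four choices. A computation with Lemma~\ref{permutation} shows each satisfies either $\pi\phi \equiv \phi\pi$ or $\pi\phi \equiv \phi^{-1}\pi$ modulo $\Inn F_2$; the former would translate the chain, which is impossible on a finite path with distinguished endpoints, while the latter makes $\pi$ act on indices as the unique reflection $i \mapsto (\beta-\alpha)-i$ swapping the two endpoints. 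If no such reflection is realized, $\Gamma(W)$ is a (P1) simple path on $\alpha+\beta+1 \geq 3$ vertices. If it is realized and $\beta-\alpha$ is even, the center vertex $[w_{(\beta-\alpha)/2}]$ is fixed, both $\phi$ and $\phi^{-1}$ from its representative land in the class of its unique neighbor, and $\Gamma(W)$ is a (P3) double-edged path; if $\beta-\alpha$ is odd, the reflection identifies an adjacent pair so that one of $\phi, \phi^{-1}$ becomes a loop at that doubled vertex, giving a (P2) looped path. In all three outcomes $|V(\Gamma(W))| \geq 2$, since $\alpha,\beta \geq 1$. The main obstacle is precisely this final enumeration: one must carefully verify with Lemma~\ref{permutation} that no other permutation configurations occur and track the exact edge counts at folded vertices via representative-dependent calculations.
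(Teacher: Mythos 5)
Your proposal is correct and follows essentially the same route as the paper: the chain $\phi^j(w)$ for $-m_x(w) \leq j \leq m_{\overline{x}}(w)$, the induction via Lemma~\ref{semi-alternating at least 1 assist} showing the interior words are minimal and semi-alternating, and the observation that the two non-semi-alternating endpoints have outdegree $1$ while all interior vertices have outdegree $2$, so that $\Gamma(W)$ is a possibly folded path on at least two vertices. Your explicit classification of the folding permutations is more detailed than the paper's conclusion, which deduces the three path types directly from the outdegree counts and the fact that every edge is accompanied by a reverse edge.
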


\begin{proof}
Lemma~\ref{semi-alternating outdegree} and Lemma~\ref{level images} imply that $\phi = (\{y\}, x)$ and its inverse are the only one-letter automorphisms that are level on $w$.
Recall that $J$ is the subgroup of $\Aut F_2$ generated by inner automorphisms and permutations.
Let
\[
	W' = \{\phi^j(w) : -m_{x}(w) \leq j \leq m_{\overline{x}}(w) \}.
\]
Claim:
$W' \subset W$, and for each minimal $v \in W$ the set $W'$ contains a minimal word equivalent to $v$ modulo $J$.
Note that in $W'$ we may have pairs of words that are equivalent modulo $J$.

Toward this claim, we first show that for $-m_{x}(w) \leq j \leq m_{\overline{x}}(w)$ the word $\phi^j(w)$ is minimal, and for $-m_{x}(w) < j < m_{\overline{x}}(w)$ we also show that $\phi^j(w)$ is semi-alternating.
We work by induction on $j$.
For $j = 0$, we have by hypothesis that $w$ is minimal and semi-alternating.
Now, suppose that $\phi^j (w)$ is minimal and semi-alternating for some $0 \leq j < m_{\overline{x}}(w)$.
Then $\phi^{-1}$ is level on $\phi^j (w)$, so since $\phi^j(w)$ is semi-alternating we have that $\phi$ is level on $\phi^j(w)$ by Lemma~\ref{level images}.
Thus, $\phi^{j+1}(w)$ is minimal.
It remains to show that if $j + 1 < m_{\overline{x}}(w)$ then $\phi^{j+1}(w)$ is semi-alternating.
In this case, by Equation~\eqref{minus 1} we have $m_{\overline{x}}(\phi^j(w)) = m_{\overline{x}}(w) - j \geq 2$, so Lemma~\ref{semi-alternating at least 1 assist} yields that $\phi^{j+1}(w)$ is semi-alternating.
A symmetric argument with $\phi^{-1}$ establishes the cases $-m_{x}(w) \leq j \leq 0$.

In fact $\phi^{-m_x(w)} (w)$ and $\phi^{m_{\overline{x}}(w)} (w)$ are not semi-alternating, since by Equation~\eqref{minus 1} $m_{\overline{x}}(\phi^{m_{\overline{x}}(w)} (w)) = m_{\overline{x}}(w) - m_{\overline{x}}(w) = 0$.
Similarly, $m_x((\phi^{-1})^{m_x(w)}(w)) = 0$.
This means that $\phi^{-m_x(w)}(w)$ and $\phi^{m_{\overline{x}}(w)}(w)$ each have at most one level one-letter automorphism (again by Lemma~\ref{level images}), and in fact $\phi$ and $\phi^{-1}$ respectively are level on these words.

For each $-m_x(w) \leq j \leq m_{\overline{x}}(w)$ we have determined the images of $\phi^j(w)$ under all level automorphisms.
Since $V(\Gamma(W))$ is connected by level one-letter automorphisms, $W'$ projects onto $V(\Gamma(W))$ and the claim follows.

In order to determine $\Gamma(W)$ from $W'$, we need to consider the possibility that some words have been listed in $W'$ more than once up to equivalence under $\sim_J$.
For the two endpoint words $\phi^{-m_x(w)}(w)$ and $\phi^{m_{\overline{x}}(w)}(w)$ we have
\begin{align*}
	m_{\overline{x}}(\phi^{-m_x(w)}(w)) &= m_{\overline{x}}(w) + m_x(w) \geq 2, \\
	m_x(\phi^{m_{\overline{x}}(w)}(w)) &= m_x(w) + m_{\overline{x}}(w) \geq 2.
\end{align*}
It follows that for $u \in \{\phi^{-m_x(w)}(w), \phi^{m_{\overline{x}}(w)}(w)\}$ we have $(xx)_u \geq 1$.
Since $u$ is also not semi-alternating, $u$ is not the image of $\phi^j(w)$ under an element of $J$.
Therefore, at least one minimal word in $W$ is semi-alternating, and at least one but at most two distinct minimal words modulo $J$ in $W$ are not semi-alternating.
So $\Gamma(W)$ is a connected directed graph with either one or two vertices having outdegree $1$ and all other vertices having outdegree $2$.
Since an edge from $[v_i]$ to $[v_j]$ in $\Gamma(W)$ implies an edge from $[v_j]$ to $[v_i]$, $\Gamma(W)$ is one of the paths claimed.
\end{proof}

We have completed the proof of Theorem~\ref{non-root class}.
The following examples illustrate the path (P1) of Lemma~\ref{semi-alternating at least 1}.

\begin{example}
Class~9.81 contains the word $w = aabab\overline{a}b\overline{a}b$, which for $y = b$ is semi-alternating.
We have $m_a(w) = 1$ and $m_{\overline{a}}(w) = 1$, so $\Gamma(W)$ for this class is
\[
	\xymatrix{
		aaabaabbb \ar@/^/[r]^\phi & \ar@/^/[l]^{\phi^{-1}} aabab\overline{a}b\overline{a}b \ar@/^/[r]^\phi & \ar@/^/[l]^{\phi^{-1}} abb\overline{a}\overline{a}b\overline{a}\overline{a}b
	}
\]
where $\phi = (\{b\}, \overline{a})$.
Observe that $\phi$ shrinks subwords $b a^i b$ (and their inverses), extends subwords $b \overline{a}^i b$ (and their inverses), and leaves subwords $b a^{\pm i} \overline{b}$ (and their inverses) fixed.
Vertices with outdegree $1$ have $(bb)_w \geq 1$.
In each subword $bb$ of $aaabaabbb$ the automorphism $\phi$ introduces $\overline{a}$.
After applying $\phi$ twice, the subword $baab$ becomes $bb$, so further applications of $\phi$ produce words that are not minimal.
\end{example}

\begin{example}
If we begin with a minimal word with $(bb)_w = 1$ rather than $(bb)_w = 2$, then the automorphic conjugacy class can be larger since the word grows at only one position rather than two.
For example, consider the word $aaa{ba\overline{b}ab}b$ belonging to class~9.97.
Its graph $\Gamma(W)$ is
\[
	\xymatrix{
		aaa{ba\overline{b}ab}b \ar@/^/[r]^\phi & \ar@/^/[l]^{\phi^{-1}} aa{ba\overline{b}ab}\overline{a}b \ar@/^/[r]^\phi & \ar@/^/[l]^{\phi^{-1}} a{ba\overline{b}ab}\overline{a}\overline{a}b \ar@/^/[r]^\phi & \ar@/^/[l]^{\phi^{-1}} {ba\overline{b}ab}\overline{a}\overline{a}\overline{a}b
	}
\]
where again $\phi = (\{b\}, \overline{a})$.
\end{example}

The automorphic conjugacy classes that are most relevant for Theorem~\ref{automorphic conjugacy class size} are those addressed by Theorem~\ref{non-root class}.
Therefore we now give a proof of Theorem~\ref{automorphic conjugacy class size}, even though Theorems~\ref{non-alternating root class} and \ref{alternating root class} on which it depends will be proved in Section~\ref{Root classes}.

\begin{proof}[Proof of Theorem~\ref{automorphic conjugacy class size}]
If $W$ is a root class whose minimal words have length $n \geq 9$, then in fact $n \geq 12$ by Theorem~\ref{divisibility by four}; by Theorems~\ref{non-alternating root class} and \ref{alternating root class}, $|V(\Gamma(W))| \leq 5 \leq n - 5$.

Therefore let $W$ be a non-root class whose minimal words have length $n \geq 9$.
We may assume that $W$ contains a minimal word $v$ with $1 \leq m_{x}(v) < \infty$, since otherwise $|V(\Gamma(W))| \leq 2$ by Lemmas~\ref{no semi-alternating word} and \ref{semi-alternating infinity}.
By the proof of Lemma~\ref{semi-alternating at least 1},
\begin{align*}
	|V(\Gamma(W))|
	&\leq m_x(v) + 1 + m_{\overline{x}}(v) \\
	&\leq 1 + \max \{i : \text{$x^i$ appears in a minimal word in $W$}\}.
\end{align*}
Therefore it suffices to show that if $x^i$ appears in a minimal word $w$ of length $n$ and $ n-6 < i \leq n - 1$ then $|V(\Gamma(W))| \leq n - 5$.

By applying a permutation, we may assume $x = a$ and $y = b$, so $w \equiv a^{n-1} b$ or $w \equiv a^{n-k} b u b$ for some subword $u$ of length $k - 2 \leq 3$.
The word $a^{n-1} b$ is not minimal, so it suffices to consider $a^{n-k} b u b$.
There are sufficiently few possibilities for $u$ that we simply check them all.

If $u$ is the empty word, then $w \equiv a^{n-2} bb$.
This word is minimal, and its graph $\Gamma(W)$ is of type (P2) for odd $n$ and of type (P3) for even $n$.
The number of vertices in $\Gamma(W)$ is $\lfloor n/2 \rfloor$, which satisfies $\lfloor n/2 \rfloor \leq n - 5$ for $n \geq 9$, 

There are $3$ words of length $1$ to check.
If $u = a$ then $w$ is not minimal.
If $u = \overline{a}$ then $w \sim a^{n-2} bb$ so we have already shown that the graph has at most $n - 5$ vertices.
If $u = b$ then $w$ is minimal, and $\Gamma(W)$ is (P1) of size $1$.

There are $7$ words of length $2$ to check:
\[
\begin{array}{cc}
u & \Gamma(W) \\ \hline
aa				& \text{\gray{not minimal}} \\
ab				& \text{\gray{not minimal}} \\
ba				& \text{\gray{not minimal}} \\
bb				& \text{(P1) of size $1$} \\
b\overline{a}			& \text{(P1) of size $1$} \\
\overline{a}b			& \text{(P1) of size $1$} \\
\overline{a}\overline{a}	& \text{(P2) or (P3) of size $\lfloor n/2 \rfloor$}
\end{array}
\]

Finally, there are $21$ words of length $3$ to check:
\[
\begin{array}{cc}
u & \Gamma(W) \\ \hline
aaa					& \text{\gray{not minimal}} \\
aab					& \text{\gray{not minimal}} \\
aba					& \text{\gray{not minimal}} \\
abb					& \text{(P1) of size $2$} \\
ab\overline{a}				& \text{\gray{not minimal}} \\
a\overline{b}a				& \text{\gray{not minimal}} \\
a\overline{b}\overline{a}		& \text{\gray{not minimal}} \\
baa					& \text{\gray{not minimal}} \\
bab					& \text{(P1) of size $2$} \\
bba					& \text{(P1) of size $2$} \\
bbb					& \text{(P1) of size $1$}
\end{array}
\qquad \qquad
\begin{array}{cc}
u & \Gamma(W) \\ \hline
bb\overline{a}				& \text{(P1) of size $1$} \\
b\overline{a}b				& \text{(P1) of size $1$} \\
b\overline{a}\overline{a}		& \text{(P1) of size $1$} \\
\overline{a}ba				& \text{\gray{not minimal}} \\
\overline{a}bb				& \text{(P1) of size $1$} \\
\overline{a}b\overline{a}		& \text{\gray{not minimal}} \\
\overline{a}\overline{a}b		& \text{(P1) of size $1$} \\
\overline{a}\overline{a}\overline{a}	& \text{(P2) or (P3) of size $\lfloor n/2 \rfloor$} \\
\overline{a}\overline{b}a		& \text{\gray{not minimal}} \\
\overline{a}\overline{b}\overline{a}	& \text{\gray{not minimal}} \\
{}					& {}
\end{array}
\]

Hence $|V(\Gamma(W))| \leq n - 5$ for all minimal words $a^{n-k} b u b$ of length $n$ where $2 \leq k \leq 5$, and the statement follows.
\end{proof}

Theorem~\ref{automorphic conjugacy class size} is sharp in the sense that for every $n \geq 9$ there exists an automorphic conjugacy class $W$ with minimal words of length $n$ such that $|V(\Gamma(W))| = n - 5$.
For example, the class containing $a^{n-6}ba\overline{b}abb$ is a (P1) class with distinct vertex representatives $a^{n-6-j}ba\overline{b}ab\overline{a}^{j}b$ for $0 \leq j \leq n-6$.
There appear to be $5$ such (P1) classes for each $n \geq 9$; see Section~\ref{Enumeration} and Appendix~\ref{Number of paths of each size}.

As can be observed from the data in Appendix~\ref{Number of paths of each size}, when $n$ is odd the double-edged path occurs only in its degenerate form.

\begin{corollary}
Let $W$ be a non-root class containing a minimal word of odd length such that $\Gamma(W)$ is of type (P3).
Then $|V(\Gamma(W))| = 1$.
\end{corollary}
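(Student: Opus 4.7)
The plan is to assume $|V(\Gamma(W))| \geq 2$ and derive the contradiction that $n$ must be even. By Lemmas~\ref{no semi-alternating word} and \ref{semi-alternating infinity}, a non-degenerate (P3) graph can only arise in the setting of Lemma~\ref{semi-alternating at least 1}, so $\Gamma(W)$ is obtained from an unfolded path $w_0, \phi(w_0), \ldots, w_M = \phi^M(w_0)$ by identifications $w_i \sim_J w_{M-i}$ effected by some involution $\pi \in J$ satisfying $\pi \phi \pi^{-1} \equiv \phi^{-1}$ modulo $\Inn F_2$.  The parity of $M$ distinguishes (P2) from (P3): when $M$ is odd the two middle indices collapse into a single vertex, producing a loop and therefore type (P2); when $M$ is even the middle vertex $v := w_{M/2}$ is fixed by $\pi$ and its two unfolded neighbors collapse into one, producing the double edge of (P3).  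Lemma~\ref{permutation} combined with Equation~\eqref{inner automorphism} forces $\pi$ either to fix $y$ and invert $x$, or to invert $y$ and fix $x$.

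The crux is to show that $\pi(v) \equiv v$ compels $|v| = n$ to be even.  Let $p$ denote the minimal cyclic period of $v$, so $v = u^{n/p}$ for some cyclically aperiodic $u \in C_2$ of length $p$; the word $u$ inherits semi-alternation and the non-root property from $v$ by the proportional scaling of all cyclic subword counts.  The cyclic rotation witnessing $\pi(v) \equiv v$, reduced modulo $p$, supplies a rotation $r_0$ with $0 \leq r_0 < p$ such that $\mathrm{rot}_{r_0}(\pi(u)) = u$ as linear words of length $p$.  The case $r_0 = 0$ is ruled out, since $\pi(u) = u$ literally would force every signed $x$-block of $u$ (in the fix-$y$, invert-$x$ case) or every $y$-letter sign (in the other case) to vanish, contradicting the fact that $u$ has nonempty $x$- and $y$-content in the Lemma~\ref{semi-alternating at least 1} setting.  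Since $\pi$ is an involution commuting with cyclic rotation, $(\mathrm{rot}_{r_0} \circ \pi)^2 = \mathrm{rot}_{2r_0}$ also fixes $u$, and aperiodicity of $u$ then forces $2 r_0 \equiv 0 \pmod p$; combined with $0 < r_0 < p$ this gives $r_0 = p/2$, so $p$, hence $n = (n/p) \cdot p$, is even.

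The main obstacle I anticipate is the bookkeeping around the descent $v \mapsto u$: one needs the cyclic rotation witnessing $\pi(v) \equiv v$ to reduce modulo $p$ to a genuine symmetry of $u$, and to exclude the possibility that the reduced rotation vanishes.  Both points are routine consequences of factoring a cyclic rotation into a shift by a multiple of $p$ plus a shift strictly less than $p$, together with the inheritance of $x$- and $y$-letter multiplicities from $v$ to $u$.  Once this descent is in place, the involution count on $\mathrm{rot}_{r_0} \circ \pi$ delivers the parity contradiction with $n$ odd.
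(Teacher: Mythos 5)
Your proposal is correct and follows essentially the same route as the paper: both reduce to the folded path of Lemma~\ref{semi-alternating at least 1}, pin down the folding permutation $\pi$ as an involution inverting exactly one of $x$ and $y$ (you via the conjugation relation $\pi \phi \pi^{-1} \equiv \phi^{-1}$, the paper via subword counts together with the endpoint condition), and then extract evenness of $n$ from the resulting symmetry $\pi(v) \equiv v$ of the double-edged vertex. The paper's final step uses the block decomposition $v = (u \cdot \pi(u))^{|v|/(2k)}$ rather than your primitive-root and half-rotation argument, but the two are interchangeable.
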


\begin{proof}
Let $[w]$ be the endpoint with outdegree $2$ of a nondegenerate (P3) graph.
By the proof of Lemma~\ref{semi-alternating at least 1}, $\phi(w) \sim_J \phi^{-1}(w)$ for some one-letter automorphism $\phi = (\{y\}, x)$ that is level on $w$.
Write $\pi \phi(w) \equiv \phi^{-1}(w)$ for some permutation $\pi$.
Since $w$ is semi-alternating and $(xx)_w \neq 0$, $\pi(x) \in \{x, \overline{x}\}$ and $\pi(y) \in \{y, \overline{y}\}$.
If $\pi$ maps $x \mapsto x, y \mapsto y$ or $x \mapsto \overline{x}, y \mapsto \overline{y}$, then by Lemma~\ref{permutation} $\phi \pi(w) \equiv \phi^{-1}(w)$, so $\pi(w) \equiv \phi^{-2}(w)$, which contradicts $[w]$ being an endpoint.
Therefore $\pi$ maps $x \mapsto \overline{x}, y \mapsto y$ or $x \mapsto x, y \mapsto \overline{y}$.
By Lemma~\ref{permutation}, $\phi^{-1} \pi(w) \equiv \phi^{-1}(w)$, so $w$ has a symmetry $\pi(w) \equiv w$.
Let $k \geq 1$ be minimal such that $w = \rho^k \pi(w)$, where $\rho$ is rotation to the right by one character.
Let $u$ be the prefix of $w$ of length $k$.
Then $w = u \cdot \pi(u) \cdot \pi^2(u) \cdot \pi^3(u) \cdots \pi^{-1}(u)$.
Since $\pi$ has order $2$, we have $w = (u \cdot \pi(u))^{|w|/(2k)}$ and $|w|$ is even.
\end{proof}

\section{Root classes}\label{Root classes}

In this section we prove Theorems~\ref{non-alternating root class} and \ref{alternating root class}, establishing the structure of root classes.
For this, we need a lemma concerning the composition of two one-letter automorphisms.
Note that we compose functions from right to left, as in Section~\ref{Introduction}.

\begin{lemma}\label{triangle}
Let $x, y \in L_2$ with $y \notin \{x, \overline{x}\}$.
Let $\pi$ be the permutation which maps $x \mapsto \overline{y}$ and $y \mapsto x$.
Then
\[
	(\{\overline{x}\}, y) \cdot (\{y\}, x) = \pi \cdot (\{x, \overline{x}\}, y) \cdot (\{\overline{x}\}, \overline{y}).
\]
\end{lemma}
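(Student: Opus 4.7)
The plan is straightforward: both sides of the identity are automorphisms of $F_2$, and by hypothesis $y \notin \{x, \overline{x}\}$, so the pair $\{x, y\}$ is a free generating set for $F_2$. Hence it suffices to check that the two compositions agree on the two elements $x$ and $y$, and I would carry out this verification directly from the definition of $(A, x)$.

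For the left-hand side I would apply the factors from right to left. The automorphism $(\{y\}, x)$ sends $x \mapsto x$ and $y \mapsto yx$, as recorded in the text. Then $(\{\overline{x}\}, y)$ fixes $y$ (since neither $y$ nor $\overline{y}$ equals $\overline{x}$) and sends $x \mapsto \overline{y}\, x$ (from $\phi(\overline{x}) = \overline{x}\, y$ one gets $\phi(x) = \overline{y}\, x$). Composing,
\[
	x \mapsto x \mapsto \overline{y}\, x,
	\qquad
	y \mapsto yx \mapsto y \cdot \overline{y}\, x = x.
\]

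For the right-hand side I would walk through the three factors in the same fashion. First $(\{\overline{x}\}, \overline{y})$ fixes $y$ and sends $x \mapsto y x$. Next, $(\{x, \overline{x}\}, y)$ is the inner automorphism conjugating $x$ and $\overline{x}$ by $y$ (as noted in Section~\ref{Introduction}), so it fixes $y$ and sends $yx \mapsto y \cdot \overline{y} x y = xy$. Finally $\pi$, defined by $x \mapsto \overline{y}$ and $y \mapsto x$, sends $xy \mapsto \overline{y}\, x$ and $y \mapsto x$. These outputs match the left-hand side, establishing the identity.

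The only real obstacle is bookkeeping: the letter $x$ is being used both as a specific element of $L_2$ and as the second coordinate in the notation $(A, x)$, and the exponents in $\phi(z) = \overline{x}^{\beta(\overline{z} \in A)} z\, x^{\beta(z \in A)}$ must be unpacked carefully for each of the five factors appearing. Once one is careful about which ``$x$'' plays which role in each factor, the verification is a short computation on generators with no mathematical content beyond the definitions.
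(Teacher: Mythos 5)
Your proposal is correct and is exactly the paper's approach: the paper's proof simply states ``One checks that both sides map $x \mapsto \overline{y}\,x$ and $y \mapsto x$,'' and you have carried out that verification on generators explicitly and accurately.
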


\begin{proof}
One checks that both sides map $x \mapsto \overline{y} x$ and $y \mapsto x$.
\end{proof}

A consequence of Lemma~\ref{triangle} is that $[(\{\overline{x}\}, y) (\{y\}, x) (w)] = [(\{\overline{x}\}, \overline{y}) (w)]$ for all $w \in C_2$.
That is, the vertex $[(\{\overline{x}\}, y) (\{y\}, x) (w)]$ is a neighbor of $[w]$ in $\Gamma(W)$.

Now we determine the structure of root classes with no alternating word.

\begin{proof}[Proof of Theorem~\ref{non-alternating root class}]
Let $W$ be a root class with no alternating minimal word.

By Corollary~\ref{outdegrees}, the outdegree of a root word vertex $[w]$ is either $2$ or $4$.
If $w$ is not alternating, then by Lemma~\ref{level images} there are only two level one-letter automorphisms on $w$.
Therefore every vertex in $\Gamma(W)$ has outdegree $2$.

We show that any two distinct vertices in $\Gamma(W)$ are neighbors.
Suppose that $u, v, w \in W$ are minimal words such that $v \equiv \phi(w)$ and $u \equiv \psi(v) \equiv \psi \phi(w)$ for some one-letter automorphisms $\phi = (\{y\}, x)$ and $\psi$.
We want to show that either $[w] = [u]$ or $[w]$ is connected to $[u]$ by a one-letter automorphism.
This will then imply that any two vertices that are connected by a sequence of one-letter automorphisms are either the same vertex or are in fact connected by a single one-letter automorphism.

We know that $\phi^{-1} = (\{y\}, \overline{x})$ is level on $v$.
Since $v$ is a root word which is not alternating, we have $(xx)_v = (yy)_v \neq 0$ and therefore Lemma~\ref{level images} implies that $\phi^{-1}$ and $(\{\overline{x}\}, y)$ are the only (distinct modulo $\Inn F_2$) one-letter automorphisms that are level on $v$.
Since $\psi$ is level on $v$, $\psi$ is equivalent modulo $\Inn F_2$ to either $\phi^{-1}$ or $(\{\overline{x}\}, y)$.
There are therefore two cases.
If $\psi$ is equivalent to $\phi^{-1}$, then we have $w \equiv u$.
If instead $\psi$ is equivalent to $\psi' = (\{\overline{x}\}, y)$, then by Lemma~\ref{triangle} we have $\psi \phi(w) \equiv \psi' \phi(w) = \pi (\{x, \overline{x}\}, y) (\{\overline{x}\}, \overline{y})(w)$, where $\pi$ is the permutation which maps $x \mapsto \overline{y}$ and $y \mapsto x$; this implies that $[w]$ is connected to $[\psi \phi(w)] = [u]$ by a one-letter automorphism.

We have shown that if $w$ and $u$ are minimal words in $W$, then $[w] = [u]$ or $[w]$ and $[u]$ are neighbors.
Since the outdegree of each vertex in $\Gamma(W)$ is $2$, this implies that there are at most three vertices in $\Gamma(W)$.
If $|V(\Gamma(W))| = 1$, then $\Gamma(W)$ is (R1), a single vertex with two loops.
If $|V(\Gamma(W))| = 3$, then $\Gamma(W)$ is (R3), a bi-directed $3$-cycle.
Otherwise, $|V(\Gamma(W))| = 2$.
Let $[w]$ and $[\phi(w)]$ be the two vertices of $\Gamma(W)$.
There is a directed edge from $[w]$ to $[\phi(w)]$ and another from $[\phi(w)]$ to $[w]$, so it suffices to determine the other two edges.
As above, $(\{\overline{x}\}, y)$ is level on $\phi(w)$ and not equivalent modulo $\Inn F_2$ to $\phi^{-1}$, so this automorphism contributes an edge from $[\phi(w)]$ to $[(\{\overline{x}\}, y)(w)]$, which is one of the two vertices.
By Lemma~\ref{triangle}, there is a directed edge from $[w]$ to $[(\{\overline{x}\}, y)(w)]$.
Therefore the other two edges point to the same vertex, and $\Gamma(W)$ is (R2).
\end{proof}

\begin{example}
Let $W$ be class~8.37, whose graph is (R3).
Let $\pi$ be the permutation mapping $a \mapsto b, b \mapsto \overline{a}$.
Write $\phi_{yx} = (\{y\}, x)$.
Then $\Gamma(W)$ is the following graph, where an edge $w \to v$ labeled $\phi$ satisfies $\phi(w) \equiv v$.
\[
	\xymatrix{
		& a\overline{b}a\overline{b}aabb \ar@/^/[dl]|{\phi_{ab}} \ar@/^/[dr]|{\pi \phi_{ba}} \\
		aaababbb \ar@/^/[rr]|{\phi_{b\overline{a}}} \ar@/^/[ur]|{\phi_{a\overline{b}}} & & aabb\overline{a}b\overline{a}b \ar@/^/[ll]|{\phi_{ba}} \ar@/^/[ul]|{\pi^{-1} \phi_{ab}}
	}
\]
\end{example}

Now we start with alternating words.
We need several lemmas.

\begin{lemma}\label{alternating fixed}
Suppose $w_0$ is an alternating minimal word and $\phi$ is a one-letter automorphism such that $\phi(w_0)$ is an alternating minimal word.
Then $\phi(w_0) = w_0$.
\end{lemma}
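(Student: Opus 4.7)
The plan is to use the subword counts of Lemma~\ref{cyclic word} to pin down the cyclic structure of $w_0$, and then to verify by direct expansion that $\phi$ acts trivially on it. Since every one-letter automorphism has the form $(\{y\},x)$ for some $y \notin \{x,\overline{x}\}$, we may assume $\phi = (\{y\},x)$. Because $w_0$ is alternating and minimal, Theorem~\ref{alternating minimal word} ensures $\phi$ is level on $w_0$. Lemma~\ref{cyclic word} then gives $(yy)_{\phi(w_0)} = (y\overline{x}y)_{w_0}$, and since $(xx)_{w_0} = 0$ the expression for $(xx)_{\phi(w_0)}$ collapses to $(yxy)_{w_0}$. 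The hypothesis that $\phi(w_0)$ is also alternating therefore forces
\[
	(y\overline{x}y)_{w_0} = (yxy)_{w_0} = 0;
\]
equivalently, no cyclic three-letter subword of $w_0$ of the form $y^{\epsilon} x^{\delta} y^{\epsilon}$ with $\epsilon, \delta \in \{1,-1\}$ occurs.

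The main structural step is to combine this with alternation. Because $w_0$ strictly alternates between the sets $\{x,\overline{x}\}$ and $\{y,\overline{y}\}$ and no three-letter subword $y^{\epsilon} x^{\delta} y^{\epsilon}$ exists, the $\{y,\overline{y}\}$-letters of $w_0$ must themselves alternate between $y$ and $\overline{y}$ as we read cyclically. In particular $|w_0|$ is a multiple of $4$, and up to cyclic rotation
\[
	w_0 = y z_1 \overline{y} z_2 y z_3 \overline{y} z_4 \cdots y z_{2k-1} \overline{y} z_{2k}
\]
for some $z_i \in \{x,\overline{x}\}$ (with the degenerate cases $|w_0| \in \{0,1\}$ handled trivially).

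With this form in hand, the final verification is a direct computation. For each $z \in \{x,\overline{x}\}$ the concatenation $\phi(y)\, z\, \phi(\overline{y}) = yxz\overline{x}\overline{y}$ reduces to $yz\overline{y}$, since both $z = x$ and $z = \overline{x}$ yield a single $x\overline{x}$ cancellation. Applying this to each of the $k$ triples $yz_{2i-1}\overline{y}$ in the cyclic expansion of $\phi(w_0)$ produces $yz_1\overline{y}z_2 \cdots yz_{2k-1}\overline{y}z_{2k} = w_0$. The step I expect to require the most care is confirming that no cancellations propagate across adjacent triples, but each reduced triple ends in $\overline{y}$ and is followed (cyclically) by $z_{2i} \in \{x,\overline{x}\}$, which is not the inverse of $\overline{y}$, so no bridging reduction can occur. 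This yields $\phi(w_0) = w_0$.
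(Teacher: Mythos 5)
Your proof is correct, and it takes a genuinely more structural route than the paper's. The paper extracts only the single condition $(y\overline{x}y)_{w_0}=0$ from the hypothesis (via $(yy)_{\phi(w_0)}=0$ and Lemma~\ref{cyclic word}), observes that every potential cancellation site $y\overline{x}$ or $x\overline{y}$ then sits inside a block $y\overline{x}\overline{y}$ or $yx\overline{y}$ that $\phi$ fixes, and finishes by invoking levelness to rule out additions as well. You instead use the full strength of the hypothesis --- both $(yy)_{\phi(w_0)}=0$ and $(xx)_{\phi(w_0)}=0$ --- to kill all four patterns $y^{\epsilon}x^{\delta}y^{\epsilon}$, which forces the $\{y,\overline{y}\}$-letters to alternate in sign and pins $w_0$ down to the normal form $yz_1\overline{y}z_2\cdots yz_{2k-1}\overline{y}z_{2k}$, after which the conclusion is a direct computation. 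Your route buys an explicit normal form (with $4 \mid |w_0|$ as a byproduct, consistent with Theorem~\ref{divisibility by four}) and, notably, never actually needs the levelness of $\phi$ that you cite at the outset; the paper's route is shorter and avoids the normal form. Two harmless points to be aware of: the equality $\phi(w_0)=w_0$ you establish is really an equality of cyclic words, since you compute with a rotated representative (the paper's proof has the same character, and this is all the lemma's applications require); and your degenerate cases should in principle also cover $|w_0|=2$, where a lone $\{y,\overline{y}\}$-letter cannot ``alternate with itself,'' but no alternating minimal word of length $2$ exists, so nothing is lost.
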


\begin{proof}
Write $\phi = (\{y\}, x)$.
Since $\phi(w_0)$ is alternating, we have $(yy)_{\phi(w_0)} = 0$, so $(y\overline{x}y)_{w_0} = 0$ by Lemma~\ref{cyclic word}.
The only length-$2$ subwords that cause cancellations under $\phi$ are $y\overline{x}$ and $x\overline{y}$.
Since $(y\overline{x}y)_{w_0} = 0$ and $w_0$ is alternating, every $y\overline{x}$ in $w_0$ appears in $y\overline{x}\overline{y}$ and every $x\overline{y}$ appears in $yx\overline{y}$.
But $\phi(y\overline{x}\overline{y}) = y\overline{x}\overline{y}$ and $\phi(yx\overline{y}) = yx\overline{y}$, so $\phi$ causes no cancellations in $w_0$.
Since all one-letter automorphisms are level on $w_0$ by Theorem~\ref{alternating minimal word}, $\phi$ also causes no additions in $w_0$.
Therefore $\phi(w_0) = w_0$.
\end{proof}

For the rest of this section, denote $\phi_1 = (\{y\}, x)$, $\phi_2 = \phi_1^{-1} = (\{y\}, \overline{x})$, $\phi_3 = (\{x\}, y)$, and $\phi_4 = \phi_3^{-1} = (\{x\}, \overline{y})$.
These are four principal one-letter automorphisms, and they are distinct modulo $\Inn F_2$.
In this notation, Lemma~\ref{triangle} implies that $[\phi_4 \phi_1(w)] = [\phi_3(w)]$.
We record this in the following corollary, along with analogous statements obtained by applying permutations to $L_2$.

\begin{corollary}\label{triangle corollary}
For $w \in C_2$,
\begin{align*}
	[\phi_2 \phi_3(w)] &= [\phi_1(w)] \\
	[\phi_1 \phi_4(w)] &= [\phi_2(w)] \\
	[\phi_4 \phi_1(w)] &= [\phi_3(w)] \\
	[\phi_3 \phi_2(w)] &= [\phi_4(w)].
\end{align*}
\end{corollary}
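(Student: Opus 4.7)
The plan is to derive all four identities from Lemma~\ref{triangle} by applying it to four different choices of the generic variables $x, y$ in its statement, and then absorbing into $[\,\cdot\,]$ both the permutation factor and the middle Type~II factor (which is always an inner automorphism).

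First I would verify the third identity, $[\phi_4 \phi_1(w)] = [\phi_3(w)]$, directly from Lemma~\ref{triangle} as stated. After applying both sides to $w$ and passing to equivalence classes modulo $J$, the permutation $\pi$ on the right-hand side is absorbed since $\pi \in J$. The middle factor $(\{x,\overline{x}\},y)$ drops out as well: by the definition of a Type~II automorphism, it fixes $y$ and sends $x \mapsto \overline{y}\, x\, y$, so it is conjugation by $y$, hence inner. The two remaining Type~II factors are then rewritten via equation~\eqref{inner automorphism}: $(\{\overline{x}\},y) \equiv (\{x\},\overline{y}) = \phi_4$ and $(\{\overline{x}\},\overline{y}) \equiv (\{x\},y) = \phi_3$ modulo $\Inn F_2$, which yields the identity.

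The remaining three identities are obtained by running the same argument after substituting $(x,y) \mapsto (x',y')$ in Lemma~\ref{triangle}. Specifically, $(x',y') = (\overline{x},y)$ should produce $[\phi_3 \phi_2(w)] = [\phi_4(w)]$, $(x',y') = (y,x)$ should produce $[\phi_2 \phi_3(w)] = [\phi_1(w)]$, and $(x',y') = (\overline{y},x)$ should produce $[\phi_1 \phi_4(w)] = [\phi_2(w)]$. In each case one reads off the three Type~II factors appearing in Lemma~\ref{triangle} under the substitution and identifies them with the appropriate $\phi_i$ modulo $\Inn F_2$, using equation~\eqref{inner automorphism} as needed to flip bars in the subscripts; the new middle factor is again an inner automorphism (conjugation by $y'$) and the new permutation lies in $J$, so both vanish after taking $[\,\cdot\,]$.

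The main obstacle is purely bookkeeping: verifying that each of the three substitutions $(x,y) \mapsto (x',y')$ produces the correct $\phi_i$ on both sides of Lemma~\ref{triangle}, with no unwanted bars surviving in the subscripts after the clean-up by equation~\eqref{inner automorphism}. No new ingredient beyond Lemma~\ref{triangle}, equation~\eqref{inner automorphism}, and the fact that permutations lie in $J$ is required.
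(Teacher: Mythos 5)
Your proposal is correct and follows the same route as the paper: the paper likewise obtains the identity $[\phi_4\phi_1(w)]=[\phi_3(w)]$ directly from Lemma~\ref{triangle} (absorbing the permutation and the inner middle factor into $[\,\cdot\,]$) and then states that the other three identities follow by applying permutations to $L_2$, which is exactly your substitution scheme. Your three substitutions check out, and in each case the factor requiring adjustment by Equation~\eqref{inner automorphism} sits outermost in the composition, so the absorbed inner automorphisms cause no trouble.
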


The statements of the next three lemmas are all of the same form.
They determine the neighborhood of a vertex containing an alternating minimal word.
They form the bulk of the proof of Theorem~\ref{alternating root class}.
Recall from Theorem~\ref{alternating minimal word} that all one-letter automorphisms are level on alternating minimal words.

\begin{lemma}\label{s1 = s2}
Let $w_0$ be an alternating minimal word such that $[\phi_1(w_0)] = [\phi_2(w_0)]$ for some $x, y \in L_2$ with $y \notin \{x, \overline{x}\}$.
Then $[\phi_3(w_0)] = [\phi_4(w_0)]$.
\end{lemma}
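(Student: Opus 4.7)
The plan is to use the composition identities from Corollary~\ref{triangle corollary} together with Lemma~\ref{permutation} to reduce $[\phi_3(w_0)] = [\phi_4(w_0)]$ to a question about which permutation can arise from the hypothesis. By Corollary~\ref{triangle corollary} we may rewrite $[\phi_3(w_0)] = [\phi_4 \phi_1(w_0)]$ and $[\phi_4(w_0)] = [\phi_3 \phi_2(w_0)]$, so the target becomes $[\phi_4 \phi_1(w_0)] = [\phi_3 \phi_2(w_0)]$. Substituting the hypothesis $\phi_1(w_0) \equiv \pi \phi_2(w_0)$ and using Lemma~\ref{permutation} to commute $\pi$ past $\phi_4$ yields $[\phi_4 \phi_1(w_0)] = [\psi \phi_2(w_0)]$ for the one-letter automorphism $\psi := \pi^{-1} \phi_4 \pi$; the lemma then reduces to verifying $\psi \equiv \phi_3$ modulo $\Inn F_2$.

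First I will dispose of the easy case $[\phi_1(w_0)] = [w_0]$. Here $\phi_1(w_0)$ lies in the $J$-class of the alternating word $w_0$ and is therefore itself alternating, so Lemma~\ref{alternating fixed} forces $\phi_1(w_0) = w_0$ outright; then $[\phi_3(w_0)] = [\phi_4 \phi_1(w_0)] = [\phi_4(w_0)]$ is immediate.

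Otherwise, a direct enumeration of the eight permutations of $L_2$ via Lemma~\ref{permutation} shows that $\psi \equiv \phi_3$ modulo inner precisely when $\pi$ swaps $x \leftrightarrow \overline{x}$ (call this $\pi_x$) or $y \leftrightarrow \overline{y}$ (call this $\pi_y$), so for these $\pi$ the conclusion follows directly from the reduction above. The main obstacle is handling the remaining six permutations. For these I plan to apply $\phi_1^{-1} = \phi_2$ to the rewritten hypothesis $\phi_1(w_0) \equiv \phi_{\sigma(2)}(\pi w_0)$, where $\sigma$ is the action of conjugation by $\pi$ on $\{\phi_1, \phi_2, \phi_3, \phi_4\}$ modulo inner (one of four possible actions from the enumeration). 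Using the collapses $\phi_2 \phi_1 = \phi_3 \phi_4 = \operatorname{id}$ together with subword-count constraints from Lemma~\ref{cyclic word}, the resulting identity should either yield $[\phi_3(w_0)] = [\phi_4(w_0)] = [w_0]$ outright (as when $\sigma = (14)(23)$, where $\phi_4 \phi_3 = \operatorname{id}$ collapses the expression to $\phi_4 \phi_1(w_0) \equiv w_0$) or force the $y$-sign sequence of $w_0$ to alternate, in which case Lemma~\ref{cyclic word} makes $\phi_1(w_0)$ alternating and the easy case applies.
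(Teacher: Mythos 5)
Your reduction is sound as far as it goes: the rewriting $[\phi_3(w_0)]=[\phi_4\phi_1(w_0)]$, $[\phi_4(w_0)]=[\phi_3\phi_2(w_0)]$ via Corollary~\ref{triangle corollary}, the commutation $\phi_4\pi=\pi(\pi^{-1}\phi_4\pi)$ via Lemma~\ref{permutation}, the disposal of the case $[\phi_1(w_0)]=[w_0]$ through Lemma~\ref{alternating fixed}, and the identification of the two permutations ($x\mapsto\overline{x}$ fixing $y$, and $y\mapsto\overline{y}$ fixing $x$) for which $\pi^{-1}\phi_4\pi\equiv\phi_3$ are all correct. The permutations with $\pi\phi_2\pi^{-1}\equiv\phi_3$ also collapse as you indicate, giving $[\phi_3(w_0)]=[w_0]$ and hence $\phi_3(w_0)=w_0=\phi_4(w_0)$. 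But for the remaining four permutations (the identity, $x\mapsto\overline{x},y\mapsto\overline{y}$, and the two order-two swaps of $x$ with $\pm y$) your argument is only a plan, and the stated mechanism does not close it. For the identity, the hypothesis yields only $\phi_1^2(w_0)\equiv w_0$; Lemma~\ref{cyclic word} then gives $(y\overline{x}y)_{\phi_1(w_0)}=0$ and $(yxy)_{\phi_1(w_0)}+(yxx)_{\phi_1(w_0)}+(xxy)_{\phi_1(w_0)}+(xxx)_{\phi_1(w_0)}=0$, but $\phi_1(w_0)$ being alternating requires $(xx)_{\phi_1(w_0)}=(yxy)_{w_0}=0$ and $(yy)_{\phi_1(w_0)}=(y\overline{x}y)_{w_0}=0$, and neither follows from those constraints by the counting you describe. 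The statement ``should either yield \dots or force the $y$-sign sequence of $w_0$ to alternate'' is precisely the step that needs a proof. What actually closes these cases is a level-automorphism count: from $\phi_1^2(w_0)\equiv\pi(w_0)$ (resp.\ $\phi_1(w_0)\equiv\phi_4\pi(w_0)$) one sees that $\phi_1$ (resp.\ $\phi_3$) is level on $\phi_1(w_0)$ in addition to $\phi_2$ and $\phi_4$, so by Corollary~\ref{outdegrees} and Lemma~\ref{level images} the word $\phi_1(w_0)$ must be an alternating root word, whence $\phi_1(w_0)=w_0$ by Lemma~\ref{alternating fixed} and your easy case applies. You never invoke this counting, so as written the proof has a genuine gap in half of the permutation cases.

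For comparison, the paper avoids the permutation case analysis entirely. It observes that $[\phi_1(w_0)]=[\phi_2(w_0)]$ is a single vertex whose neighbors include $[w_0]$, $[\phi_4\phi_1(w_0)]=[\phi_3(w_0)]$, and $[\phi_3\phi_2(w_0)]=[\phi_4(w_0)]$. If $\phi_1(w_0)$ is alternating, Lemma~\ref{alternating fixed} gives $\phi_1(w_0)=w_0$ and the conclusion is immediate; otherwise that vertex has outdegree $2$ by Lemma~\ref{level images}, so two of the three listed neighbors coincide, and the coincidences $[w_0]=[\phi_3(w_0)]$ or $[w_0]=[\phi_4(w_0)]$ again reduce to Lemma~\ref{alternating fixed}. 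If you want to salvage your case-by-case route, replace the appeal to subword counts in the four hard cases by the outdegree argument above; but the global outdegree argument makes the enumeration unnecessary.
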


\begin{proof}
By Theorem~\ref{alternating minimal word}, $\phi_1$ is level on $w_0$.
Since $\phi_2 = \phi_1^{-1}$, $\phi_2$ is level on $\phi_1(w_0)$.
By Lemma~\ref{level images}, $\phi_4$ is also level on $\phi_1(w_0)$.
Let us compute the neighbors of $\phi_1(w_0)$ under $\phi_2$ and $\phi_4$.
We have $[\phi_2 \phi_1(w_0)] = [w_0]$, and Corollary~\ref{triangle corollary} implies that $[\phi_4 \phi_1(w_0)] = [\phi_3(w_0)]$.
Similarly, the images of $\phi_2(w_0)$ under $\phi_1$ and $\phi_3$ are $[\phi_1 \phi_2(w_0)] = [w_0]$ and $[\phi_3 \phi_2(w_0)] = [\phi_4(w_0)]$.

If $\phi_1(w_0)$ is alternating, then $\phi_1(w_0) = w_0$ by Lemma~\ref{alternating fixed}.
Then $[\phi_4 \phi_1(w_0)] = [\phi_3(w_0)]$ implies $[\phi_4(w_0)] = [\phi_3(w_0)]$ as desired.

If $\phi_1(w_0)$ is not alternating, then by Lemma~\ref{level images} the outdegree of $[\phi_1(w_0)]$ is $2$.
Since we have shown that $[w_0]$, $[\phi_3(w_0)]$, and $[\phi_4(w_0)]$ are all neighbors of $[\phi_1(w_0)]$, it follows that two of these three vertices are equal.
If $[\phi_3(w_0)] = [\phi_4(w_0)]$, we are finished.
If $[w_0] = [\phi_3(w_0)]$ or $[w_0] = [\phi_4(w_0)]$, then we see that $\phi_3(w_0)$ or $\phi_4(w_0)$ is alternating; in either case Lemma~\ref{alternating fixed} gives $\phi_3(w_0) = w_0 = \phi_4(w_0)$.
\end{proof}

\begin{lemma}\label{s1 = s3}
Let $w_0$ be an alternating minimal word such that $[\phi_1(w_0)] = [\phi_3(w_0)]$ for some $x, y \in L_2$ with $y \notin \{x, \overline{x}\}$.
Then $[\phi_2(w_0)] = [\phi_4(w_0)]$.
\end{lemma}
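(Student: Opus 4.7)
My plan is to follow the same two-case structure as the proof of Lemma~\ref{s1 = s2}, splitting on whether $\phi_1(w_0)$ is alternating. If $\phi_1(w_0)$ is alternating, then Lemma~\ref{alternating fixed} forces $\phi_1(w_0) = w_0$, so $\phi_2(w_0) = w_0$ as well. The hypothesis then puts $\phi_3(w_0)$ into $[w_0]$, and since being alternating is preserved by $J$-equivalence, one more application of Lemma~\ref{alternating fixed} gives $\phi_3(w_0) = w_0$ and hence $\phi_4(w_0) = w_0$, so the conclusion is immediate.

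Suppose instead that $\phi_1(w_0)$ is not alternating. By Theorem~\ref{root classes} the word $\phi_3(w_0)$ is still a root word, but since $\phi_3(w_0) \sim_J \phi_1(w_0)$ it is non-alternating as well. Applying Lemma~\ref{level images} to the level automorphism $\phi_4$ on $\phi_3(w_0)$, the only principal automorphisms level on $\phi_3(w_0)$ modulo $\Inn F_2$ are $\phi_4$ and $\phi_2$. The hypothesis gives a permutation $\sigma$ such that $\sigma \phi_1(w_0) \equiv \phi_3(w_0)$, and Lemma~\ref{permutation} rewrites this as
\[
\phi_1^\sigma \, \sigma(w_0) \equiv \phi_3(w_0),
\]
where $\phi_1^\sigma = (\{\sigma(y)\}, \sigma(x))$ is itself one of the four principal one-letter automorphisms modulo $\Inn F_2$.

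The core of the proof is to show that $\phi_1^\sigma \equiv \phi_3$ modulo $\Inn F_2$, by ruling out the other three possibilities. If $\phi_1^\sigma \equiv \phi_2$ or $\phi_1^\sigma \equiv \phi_4$, rearrangement yields $\sigma(w_0) \equiv \phi_1 \phi_3(w_0)$ or $\sigma(w_0) \equiv \phi_3^2(w_0)$, respectively; but $\phi_1$ and $\phi_3$ are not level on $\phi_3(w_0)$, so the right-hand sides are strictly longer than $|w_0|$, contradicting $|\sigma(w_0)| = |w_0|$. If $\phi_1^\sigma \equiv \phi_1$, rearrangement gives $\sigma(w_0) \equiv \phi_2 \phi_3(w_0) \in [\phi_1(w_0)]$ by Corollary~\ref{triangle corollary}; but $\sigma(w_0) \in [w_0]$ automatically (as $\sigma \in J$), forcing $[\phi_1(w_0)] = [w_0]$ and hence $\phi_1(w_0)$ alternating, contradicting the standing assumption. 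Therefore $\phi_1^\sigma \equiv \phi_3$, and applying $\phi_3^{-1} = \phi_4$ to both sides of the displayed equivalence yields $\sigma(w_0) \equiv w_0$.

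In this surviving case, the two possible $\sigma$---those with $\sigma(y) = x$, $\sigma(x) = y$ or with $\sigma(y) = \overline{x}$, $\sigma(x) = \overline{y}$---both satisfy $\phi_2^\sigma = (\{\sigma(y)\}, \sigma(\overline{x})) \equiv \phi_4$ modulo $\Inn F_2$, so $\sigma \phi_2(w_0) = \phi_2^\sigma \sigma(w_0) \equiv \phi_4 \sigma(w_0) \equiv \phi_4(w_0)$, which gives $[\phi_2(w_0)] = [\phi_4(w_0)]$. The main obstacle is the case analysis for $\phi_1^\sigma$: ensuring that all three ``wrong'' classes lead to contradictions requires combining length arguments (via Lemma~\ref{level images} applied to $\phi_3(w_0)$) with both the $J$-invariance of alternating-ness and Corollary~\ref{triangle corollary}.
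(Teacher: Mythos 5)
Your proof is correct, but it takes a genuinely different route from the paper's. The paper's argument is computational: writing $\phi_1(w_0) \equiv \pi \phi_3(w_0)$, it runs through all eight permutations $\pi$, using the subword-count identities of Lemma~\ref{cyclic word} to show in six cases that $(yxy)_{w_0} = (y\overline{x}y)_{w_0} = 0$, forcing $w_0 \equiv \sigma((ab\overline{a}\overline{b})^n)$, and in the remaining two cases deriving $w_0 \equiv \pi(w_0)$ and then a periodicity-plus-divisibility-by-four contradiction showing $w_0$ is empty. Your argument replaces all of the subword counting with structural tools already in the paper: the split on whether $\phi_1(w_0)$ is alternating, Lemma~\ref{alternating fixed}, the determination via Lemma~\ref{level images} that only $\phi_2$ and $\phi_4$ are level on $\phi_3(w_0)$ when it is non-alternating, and length/levelness contradictions plus Corollary~\ref{triangle corollary} to pin down the conjugated automorphism $(\{\sigma(y)\},\sigma(x)) \equiv \phi_3$, after which the conclusion falls out of Lemma~\ref{permutation}. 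This is shorter and avoids the divisibility argument entirely. The trade-off is that the paper's proof yields a strictly stronger byproduct: it shows the hypothesis forces $w_0 \equiv \sigma((ab\overline{a}\overline{b})^n)$ (in particular that $\phi_1(w_0)$ is alternating), and this extra fact --- not just the stated conclusion --- is invoked later in the proof of Theorem~\ref{alternating root class} (``the proof of Lemma~\ref{s1 = s3} shows that $\phi_1(w_0)$ is alternating''). Your Case~2 establishes the conclusion without showing that the case is vacuous, so if your proof were substituted for the paper's, that later step would need to be re-justified.
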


\begin{proof}
By the definition of a root word, $(yy)_{\phi_1(w_0)} = (xx)_{\phi_1(w_0)}$; rewriting each side using Lemma~\ref{cyclic word} gives
\[
	(y\overline{x}y)_{w_0} = (yxy)_{w_0} + (yxx)_{w_0} + (xxy)_{w_0} + (xxx)_{w_0}.
\]
Since $w_0$ is alternating, this equation becomes $(y\overline{x}y)_{w_0} = (yxy)_{w_0}$.
Symmetrically, since $\phi_3(w_0)$ is a root word, we have $(x\overline{y}x)_{w_0} = (xyx)_{w_0}$.

Let $\pi$ be a permutation such that $\phi_1(w_0) \equiv \pi \phi_3 (w_0)$.
Then $(xx)_{\phi_1(w_0)} = (xx)_{\pi \phi_3 (w_0)} = (xx)_{\phi_3 (w_0)}$, so
\[
	(y\overline{x}y)_{w_0} = (yxy)_{w_0} = (x\overline{y}x)_{w_0} = (xyx)_{w_0}.
\]
For six of the eight possible permutations $\pi$, we show that these four expressions are equal to $0$.
For these $\pi$, this will imply that no letter occurs two letters away from itself in $w_0$, so $w_0 \equiv \sigma((a b \overline{a} \overline{b})^n)$ for some permutation $\sigma$.
As already stated in Section~\ref{The graph}, for this word we have $[\phi(w_0)] = [w_0]$ for each one-letter automorphism $\phi$.

If $\pi$ maps $x \mapsto x, y \mapsto y$ or $x \mapsto \overline{x}, y \mapsto \overline{y}$, consider $(yxy)_{\phi_1(w_0)} = (yxy)_{\pi \phi_3(w_0)}$.
Then $(yxy)_{\phi_1(w_0)} = (yxy)_{\phi_3(w_0)}$, and rewriting each side gives
\[
	(yy)_{w_0} = (yxy)_{w_0} + (yxx)_{w_0} + (xxy)_{w_0} + (xxx)_{w_0},
\]
which simplifies to $0 = (yxy)_{w_0}$ because $w_0$ is alternating.

If $\pi$ maps $x \mapsto \overline{x}, y \mapsto y$ or $x \mapsto x, y \mapsto \overline{y}$, then consider $(y\overline{x}y)_{\phi_1(w_0)} = (y\overline{x}y)_{\pi \phi_3(w_0)}$.
Since $(y\overline{x}y)_{\pi \phi_3(w_0)} = (yxy)_{\phi_3(w_0)}$, the right side is the same as before, and we obtain
\[
	(y\overline{x}\overline{x}y)_{w_0} = (yxy)_{w_0} + (yxx)_{w_0} + (xxy)_{w_0} + (xxx)_{w_0},
\]
which simplifies to $0 = (yxy)_{w_0}$.

If $\pi$ maps $x \mapsto \overline{y}, y \mapsto x$ or $x \mapsto y, y \mapsto \overline{x}$, use Lemma~\ref{permutation} to write $\phi_1(w_0) \equiv \pi \phi_3(w_0) = (\{\pi(x)\}, \pi(y)) \pi(w_0)$.
In either case, we obtain $\phi_1(w_0) \equiv \phi_2 \pi(w_0)$ (where for the permutation $x \mapsto \overline{y}, y \mapsto x$ we have used Equation~\eqref{inner automorphism}).
Hence $\phi_1^2(w_0) \equiv \pi(w_0)$, and $\phi_1^2(w_0)$ is alternating.
In particular, $(yxxxy)_{\phi_1^2(w_0)} = 0$, and this implies $(yxy)_{w_0} = 0$.

Two permutations remain to be considered.
Let $\pi$ map $x \mapsto y, y \mapsto x$ or $x \mapsto \overline{y}, y \mapsto \overline{x}$.
Lemma~\ref{permutation} gives $\phi_1(w_0) \equiv \pi \phi_3(w_0) = (\{\pi(x)\}, \pi(y)) \pi(w_0) \equiv \phi_1 \pi(w_0)$.
Hence $w_0 \equiv \pi(w_0)$.
We show that the only alternating minimal word satisfying this equation is the empty word.
Assume toward a contradiction that $w_0$ is nonempty.
Let $k \geq 1$ be minimal such that $w_0 = \rho^k \pi(w_0)$, where $\rho$ is rotation to the right by one character.
Let $u$ be the prefix of $w_0$ of length $k$.
Then $w_0 = u \cdot \pi(u) \cdot \pi^2(u) \cdot \pi^3(u) \cdots \pi^{-1}(u)$.
Since $\pi$ has order $2$, we have $w_0 = (u \cdot \pi(u))^{|w_0|/(2k)}$ and $|w_0|$ is divisible by $2 k$.
Since $w_0$ is alternating and $\pi(x) \in \{y, \overline{y}\}$, $k$ is odd.
Since $w_0$ is a root word, it follows that $u \cdot \pi(u)$ is a root word.
By Theorem~\ref{divisibility by four}, $|u \cdot \pi(u)| = 2 k$ is divisible by $4$, which contradicts $k$ being odd.
\end{proof}

As we have just seen, $(a b \overline{a} \overline{b})^n$ is essentially the only alternating minimal word satisfying $[\phi_1(w_0)] = [\phi_3(w_0)]$.
However, the equation $[\phi_1(w_0)] = [\phi_4(w_0)]$, which is the subject of the following lemma, has additional solutions.
For example, $abab\overline{a}b\overline{a}\overline{b}\overline{a}\overline{b}a\overline{b}$ is a solution.

\begin{lemma}\label{s1 = s4}
Let $w_0$ be an alternating minimal word such that $[\phi_1(w_0)] = [\phi_4(w_0)]$ for some $x, y \in L_2$ with $y \notin \{x, \overline{x}\}$.
Then $[\phi_2(w_0)] = [\phi_3(w_0)]$.
\end{lemma}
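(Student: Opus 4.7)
My plan is to follow the template of the proof of Lemma~\ref{s1 = s2}: I will exhibit several out-neighbors of the common vertex $[\phi_1(w_0)] = [\phi_4(w_0)]$, and then use Corollary~\ref{outdegrees} to force two of them to coincide.

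First I determine the level one-letter automorphisms on the two representatives $\phi_1(w_0)$ and $\phi_4(w_0)$. By Theorem~\ref{alternating minimal word}, $\phi_1$ and $\phi_4$ are level on $w_0$, so $\phi_2 = \phi_1^{-1}$ is level on $\phi_1(w_0)$ and $\phi_3 = \phi_4^{-1}$ is level on $\phi_4(w_0)$. Since $w_0$ is an alternating root word, Theorem~\ref{root classes} shows that $\phi_1(w_0)$ and $\phi_4(w_0)$ are also root words; part~(ii) of Lemma~\ref{level images} then shows that $\phi_4$ is level on $\phi_1(w_0)$ and $\phi_1$ is level on $\phi_4(w_0)$. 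Corollary~\ref{triangle corollary} identifies the resulting images as $[\phi_2 \phi_1(w_0)] = [w_0]$, $[\phi_4 \phi_1(w_0)] = [\phi_3(w_0)]$, $[\phi_3 \phi_4(w_0)] = [w_0]$, and $[\phi_1 \phi_4(w_0)] = [\phi_2(w_0)]$, so $[w_0]$, $[\phi_2(w_0)]$, and $[\phi_3(w_0)]$ are all out-neighbors of the single vertex $[\phi_1(w_0)] = [\phi_4(w_0)]$.

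I then split on whether $\phi_1(w_0)$ is alternating. If it is, Lemma~\ref{alternating fixed} gives $\phi_1(w_0) = w_0$, and since $\phi_4(w_0) \in [w_0]$ is also alternating (as $J$ preserves alternation), Lemma~\ref{alternating fixed} gives $\phi_4(w_0) = w_0$ as well; substituting these into the identities above yields $[\phi_2(w_0)] = [w_0] = [\phi_3(w_0)]$. Otherwise $\phi_1(w_0)$ is a non-alternating root word, so by Corollary~\ref{outdegrees} the vertex has outdegree exactly $2$. The multiset of its out-neighbors computed from the representative $\phi_1(w_0)$ is $\{[w_0], [\phi_3(w_0)]\}$, whereas computed from $\phi_4(w_0)$ it is $\{[w_0], [\phi_2(w_0)]\}$; equating these multisets gives $[\phi_2(w_0)] = [\phi_3(w_0)]$ regardless of whether either vertex coincides with $[w_0]$. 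The main bookkeeping hurdle I anticipate is the careful application of Lemma~\ref{level images} after swapping variables and translating the resulting automorphisms back to the principal ones via Equation~\eqref{inner automorphism}; beyond that, the argument is a direct analogue of the proof of Lemma~\ref{s1 = s2}.
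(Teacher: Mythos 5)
Your proof is correct, but it is genuinely different from the one in the paper. The paper proves Lemma~\ref{s1 = s4} by writing $\phi_1(w_0) \equiv \pi \phi_4(w_0)$ and running a case analysis over the eight possible permutations $\pi$: for six of them it derives subword-count identities forcing $w_0 \equiv \sigma((ab\overline{a}\overline{b})^n)$, and for the two order-$4$ permutations it extracts a periodicity $w_0 = \bigl(\prod_{i=0}^{3}\sigma^i(u)\bigr)^n$ and computes $[\phi_2(w_0)]=[\phi_3(w_0)]$ directly. What you propose instead is to transplant the paper's own proof of Lemma~\ref{s1 = s2} --- compute the out-edges of the single vertex $[\phi_1(w_0)]=[\phi_4(w_0)]$ from each of its two representatives and use Corollary~\ref{triangle corollary} together with the outdegree bound --- and this works: the key point is that the triangle identities send the two computations to the \emph{different} third vertices $[\phi_3(w_0)]$ and $[\phi_2(w_0)]$, which is exactly the pair you need to identify. (This is also why the same shortcut cannot prove Lemma~\ref{s1 = s3}: there both computations return $[w_0]$ and the vertex itself, yielding no information, which is presumably why the authors built the permutation machinery and then reused it here.) Your argument is considerably shorter; the paper's buys extra structural information about which $w_0$ can occur, which it does not actually need for Theorem~\ref{alternating root class}. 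Two small points you should make explicit: the step ``equating these multisets'' rests on the representative-independence of the number of edges from $[\phi_1(w_0)]$ to each vertex, which is established in Section~\ref{The graph} when $\Gamma(W)$ is shown to be well defined; and Corollary~\ref{outdegrees} only gives outdegree in $\{2,4\}$ for a root word, so to get outdegree exactly $2$ you should note (via Lemma~\ref{level images}) that outdegree $4$ forces the word to be an alternating root word, which you have excluded in that branch. With those justifications added, the proof is complete.
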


\begin{proof}
As in the proof of Lemma~\ref{s1 = s3}, one can show that
\[
	(y\overline{x}y)_{w_0} = (yxy)_{w_0} = (x\overline{y}x)_{w_0} = (xyx)_{w_0}.
\]
Write $\phi_1(w_0) \equiv \pi \phi_4(w_0)$.
For six of the eight possible permutations $\pi$, we now show that these four expressions are equal to $0$; it will follow in these cases that $w_0 \equiv \sigma((a b \overline{a} \overline{b})^n)$ for some permutation $\sigma$, and hence $[\phi_2(w_0)] = [\phi_3(w_0)]$.

If $\pi$ maps $x \mapsto x, y \mapsto y$ or $x \mapsto \overline{x}, y \mapsto \overline{y}$, consider $(\overline{y}x\overline{y})_{\phi_1(w_0)} = (\overline{y}x\overline{y})_{\pi \phi_4(w_0)}$.
This is equivalent to
\[
	(\overline{y}xx\overline{y})_{w_0} = (\overline{y}x\overline{y})_{w_0} + (\overline{y}xx)_{w_0} + (xx\overline{y})_{w_0} + (xxx)_{w_0},
\]
which simplifies to $0 = (y\overline{x}y)_{w_0}$ since $w_0$ is alternating.

If $\pi$ maps $x \mapsto \overline{x}, y \mapsto y$ or $x \mapsto x, y \mapsto \overline{y}$, consider $(yxy)_{\phi_1(w_0)} = (yxy)_{\pi \phi_4(w_0)} = (y\overline{x}y)_{\phi_4(w_0)}$.
Therefore $0 = (y\overline{x}y)_{w_0}$.

If $\pi$ maps $x \mapsto y, y \mapsto x$ or $x \mapsto \overline{y}, y \mapsto \overline{x}$, then by Lemma~\ref{permutation} we have $\phi_1(w_0) \equiv \pi \phi_4(w_0) = (\{\pi(x)\}, \pi(\overline{y})) \pi(w_0) \equiv \phi_2 \pi(w_0)$.
As in the proof of Lemma~\ref{s1 = s3}, $\phi_1^2(w_0) \equiv \pi(w_0)$ implies $(yxy)_{w_0} = 0$.

It remains to address the two order-$4$ permutations mapping $x \mapsto \overline{y}, y \mapsto x$ and $x \mapsto y, y \mapsto \overline{x}$.
Let $\pi$ be either of these permutations.
By Lemma~\ref{permutation}, $\phi_1(w_0) \equiv \pi \phi_4(w_0) \equiv \phi_1 \pi(w_0)$.
Hence $w_0 \equiv \pi(w_0)$.
Since the conclusion holds for the empty word, assume $w_0$ is nonempty.
Let $k \geq 1$ be minimal such that $w_0 = \rho^k \sigma(w_0)$ for some $\sigma \in \{\pi, \pi^{-1}\}$, where again $\rho$ is rotation to the right by one character.
Let $u$ be the prefix of $w_0$ of length $k$.
Then $w_0 = u \cdot \sigma(u) \cdot \sigma^2(u) \cdot \sigma^3(u) \cdots \sigma^{-1}(u)$.
Since $\sigma$ has order $4$, we have $w_0 = (u \cdot \sigma(u) \cdot \sigma^2(u) \cdot \sigma^3(u))^n = \left(\prod_{i=0}^3 \sigma^i(u) \right)^n$, where $n = \frac{|w_0|}{4k}$.
Therefore $\sigma(w_0) \equiv w_0$.
By Lemma~\ref{permutation} and Equation~\eqref{inner automorphism},
\begin{align*}
	\sigma \phi_2(w_0)
	&= (\{\sigma(y)\}, \sigma(\overline{x})) \sigma(w_0) \\
	&\equiv \phi_3 \sigma(w_0) \\
	&\equiv \phi_3(w_0),
\end{align*}
so $[\phi_2(w_0)] = [\phi_3(w_0)]$.
\end{proof}

Experimental evidence suggests that in fact the previous three lemmas can be generalized, but we do not have a proof.

\begin{conjecture*}
Lemmas~\ref{s1 = s2}, \ref{s1 = s3}, and \ref{s1 = s4} remain true if we remove the requirement that $w_0$ is alternating.
\end{conjecture*}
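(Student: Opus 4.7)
My plan is to follow the same case analysis used in the proofs of Lemmas~\ref{s1 = s2}--\ref{s1 = s4}, where the hypothesis $[\phi_i(w_0)] = [\phi_j(w_0)]$ is rewritten as $\phi_j(w_0) \equiv \pi \phi_i(w_0)$ for some permutation $\pi$, and Lemma~\ref{permutation} then converts the right-hand side into $(\{\pi(y)\}, \pi(x)) \pi(w_0)$. For each of the eight permutations $\pi$, I would identify $(\{\pi(y)\}, \pi(x))$ modulo $\Inn F_2$ as one of $\phi_1^{\pm 1}, \phi_2^{\pm 1}, \phi_3^{\pm 1}, \phi_4^{\pm 1}$, and handle the resulting equation. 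The ``symmetry'' cases, in which the equation collapses to $\pi(w_0) \equiv w_0$, transport unchanged: applying the same permutation identity to $\phi_k$ and $\phi_l$ yields $[\phi_k(w_0)] = [\phi_l(w_0)]$ immediately, exactly as in the six ``easy'' permutations of the proofs in Section~\ref{Root classes}.

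Once the symmetry cases are disposed of, the ``twisted'' cases are where the alternating hypothesis was used most essentially. In the alternating proofs, the equalities $(xx)_{w_0} = (yy)_{w_0} = 0$ collapse subword counts like $(yxy)_{w_0}$ and $(y\overline{x}y)_{w_0}$ to zero via Lemma~\ref{cyclic word}, forcing $w_0$ to be a cyclic image of $(a b \overline{a} \overline{b})^n$. Without the alternating hypothesis these length-$3$ subword counts need not vanish, and one has instead an identity such as $(y\overline{x}y)_{w_0} = (yxy)_{w_0} + (yxx)_{w_0} + (xxy)_{w_0} + (xxx)_{w_0}$. My approach would be to extract further subword equalities by equating not just the $(yy)$ and $(xx)$ counts on the two sides of $\phi_j(w_0) \equiv \pi \phi_i(w_0)$, but every length-$k$ count permitted by Lemma~\ref{cyclic word}; these additional constraints should pin down enough of the structure of $w_0$ to emulate the alternating endgame. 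For the two order-$4$ permutations, the periodic decomposition $w_0 = \prod_{i=0}^3 \sigma^i(u)$ from the proof of Lemma~\ref{s1 = s4} still applies, and I would substitute for the appeal to Theorem~\ref{divisibility by four} a direct verification that $[\phi_k(w_0)] = [\phi_l(w_0)]$ follows from the period structure by another application of Lemma~\ref{permutation}.

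The hardest step will be the twisted cases above. The alternating proofs succeed precisely because the offending subword counts are forced to vanish; in their absence one must either show that they cancel against each other (which would require a genuinely new cyclic identity relating the length-$3$ counts under $\phi_i^{\pm 1}$) or argue that no minimal non-alternating $w_0$ can satisfy the hypothesis in a twisted configuration, reducing the conjecture to cases already handled. The authors' experimental confidence paired with the explicit absence of a proof suggests the cancellation is subtle and most likely requires an additional structural insight about $\Gamma(W)$, perhaps an explicit involution on $\{\phi_1, \phi_2, \phi_3, \phi_4\}$ respected by every vertex of every $\Gamma(W)$, which would make all three lemmas simultaneous consequences of a single dihedral symmetry of the edge set.
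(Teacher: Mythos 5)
There is no proof in the paper to compare against: the statement you are addressing is explicitly labelled a conjecture, and the authors state that they have experimental evidence but no proof. Your proposal does not close that gap. It is a plan, not an argument: after correctly describing the case structure of the existing proofs (eight permutations $\pi$ with $\phi_j(w_0) \equiv \pi\phi_i(w_0)$, converted via Lemma~\ref{permutation}), you arrive at the ``twisted'' cases and offer only alternatives --- ``one must either show that they cancel against each other \ldots or argue that no minimal non-alternating $w_0$ can satisfy the hypothesis'' --- followed by speculation about a ``dihedral symmetry of the edge set.'' Neither alternative is carried out, so the conjecture remains unproved by your proposal exactly where the authors left it.

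Two more specific problems. First, your claim that the ``symmetry'' cases ``transport unchanged'' is not right: in the paper's proofs even the six easy permutations use the alternating hypothesis essentially. For instance, in Lemma~\ref{s1 = s3} the identity permutation case produces $(yy)_{w_0} = (yxy)_{w_0} + (yxx)_{w_0} + (xxy)_{w_0} + (xxx)_{w_0}$, which yields $(yxy)_{w_0} = 0$ only because $(yy)_{w_0} = (yxx)_{w_0} = (xxy)_{w_0} = (xxx)_{w_0} = 0$ for alternating $w_0$; without that hypothesis the equation constrains nothing by itself, and the endgame (forcing $w_0 \equiv \sigma((ab\overline{a}\overline{b})^n)$, from which the conclusion is read off) is unavailable. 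Likewise the order-$2$ swap case of Lemma~\ref{s1 = s3} uses ``$w_0$ is alternating and $\pi(x) \in \{y, \overline{y}\}$, so $k$ is odd'' before invoking Theorem~\ref{divisibility by four}; dropping the alternating hypothesis breaks the parity argument, not just the final appeal. Second, your suggestion to equate ``every length-$k$ count permitted by Lemma~\ref{cyclic word}'' is not backed by any demonstration that the resulting system of constraints is strong enough; the nontrivial examples the authors give ($aabb$, $aabb\overline{a}\overline{a}\overline{b}\overline{b}$, $abab\overline{a}b\overline{a}\overline{b}\overline{a}\overline{b}a\overline{b}$) show that the conclusion must hold for genuinely non-alternating words that are not periodic images of $(ab\overline{a}\overline{b})^n$, so any successful argument needs a mechanism other than forcing the length-$3$ counts to vanish. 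As it stands, your proposal restates the difficulty rather than resolving it.
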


For example, $aaaa$ satisfies the condition $[\phi_1(w_0)] = [\phi_2(w_0)]$ of Lemma~\ref{s1 = s2} and also the conclusion $[\phi_3(w_0)] = [\phi_4(w_0)]$.
Examples for Lemmas~\ref{s1 = s3} and \ref{s1 = s4} are, respectively, $aabb$ and $aabb\overline{a}\overline{a}\overline{b}\overline{b}$.

To classify the graphs of root classes containing an alternating minimal word, however, we only need the lemmas as stated.

\begin{proof}[Proof of Theorem~\ref{alternating root class}]
First we establish the uniqueness of an alternating word vertex $[w_0]$ in $\Gamma(W)$ and that every other vertex is a neighbor of $[w_0]$.
Let $w_0 \in W$ be an alternating root word.
If $[w_0]$ is the only vertex of $\Gamma(W)$, then it is clearly the unique vertex containing alternating minimal words.
Otherwise, let $x, y \in L_2$ such that $[\phi_1(w_0)] \neq [w_0]$.
By Lemma~\ref{alternating fixed}, $\phi_1(w_0)$ is not alternating.
Thus, by Lemma~\ref{level images}, the outdegree of $[\phi_1(w_0)]$ is $2$.
As in the proof of Lemma~\ref{s1 = s2}, the principal automorphisms that are level on $\phi_1(w_0)$ are $\phi_2$ and $\phi_4$.
The image of $\phi_1(w_0)$ under $\phi_2$ is $w_0$, and by Corollary~\ref{triangle corollary} the vertex $[\phi_4 \phi_1(w_0)]$ is connected to $[w_0]$ by a one-letter automorphism.
That is, any edge from $[\phi_1(w_0)]$ that does not point to $[w_0]$ points to a neighbor of $[w_0]$ (possibly to $[\phi_1(w_0)]$ itself).
Since $\Gamma(W)$ is connected, this implies that every vertex other than $[w_0]$ is, in fact, a neighbor of $[w_0]$.
Lemma~\ref{alternating fixed} now implies that $[w_0]$ is the unique vertex in $\Gamma(W)$ containing an alternating minimal word.

By Lemma~\ref{alternating fixed}, if $[w_0]$ is connected to itself by $[\phi]$ for some one-letter automorphism $\phi$ then $[w_0]$ is also connected to itself by $[\phi^{-1}]$.
In other words, loops on $[w_0]$ come in pairs of inverse automorphisms.
We consider separately the cases that $[w_0]$ has $4$, $2$, or $0$ loops.

If $[w_0]$ has $4$ loops, then $\Gamma(W)$ is (R4), a single vertex with four loops.

Suppose $[w_0]$ has exactly $2$ loops.
Let $x, y$ be such that $[\phi_1(w_0)] = [w_0] = [\phi_2(w_0)]$.
By Lemma~\ref{s1 = s2}, $[\phi_3(w_0)] = [\phi_4(w_0)]$, so $\Gamma(W)$ has exactly two vertices, $[w_0]$ and $[\phi_3(w_0)]$.
Since $\phi_3$ and $\phi_4$ are inequivalent modulo $\Inn F_2$, two edges connect $[w_0]$ to $[\phi_3(w_0)]$.
This accounts for all four edges emanating from $[w_0]$, so it suffices to determine the edges from $[\phi_3(w_0)]$.
The one-letter automorphisms that are level on $\phi_3(w_0)$ are $\phi_4$ and $\phi_2$.
Moreover, $\phi_4 \phi_3(w_0) = w_0$ and by Corollary~\ref{triangle corollary} $[\phi_2 \phi_3(w_0)] = [\phi_1(w_0)] = [w_0]$.
There are therefore two edges from $[\phi_3(w_0)]$ to $[w_0]$, so $\Gamma(W)$ is (R5).

Finally, suppose that $[w_0]$ has no loops.
Corollary~\ref{triangle corollary} implies that $[\phi_1(w_0)]$ is connected to $[\phi_3(w_0)]$ by a one-letter automorphism and that $[\phi_2(w_0)]$ is connected to $[\phi_4(w_0)]$ by a one-letter automorphism (allowing the possibility that these edges may be loops).
If $[w_0]$ has four distinct neighbors, then, since $[w_0]$ is the only vertex with outdegree $4$, the outdegree of each other vertex is $2$, and it follows that $\Gamma(W)$ is the bow tie (R7).
If $[w_0]$ has fewer than four neighbors, then there is at least one pair of identified images of $w_0$.
The $\binom{4}{2} = 6$ possibilities are as follows.

If $[\phi_1(w_0)] = [\phi_2(w_0)]$, then $[\phi_3(w_0)] = [\phi_4(w_0)]$ by Lemma~\ref{s1 = s2}.
Therefore $[w_0]$ has exactly two neighbors, each of which has outdegree $2$.
Moreover, two edges connect $[w_0]$ to each of its neighbors.
Therefore $\Gamma(W)$ is (R6).

If $[\phi_1(w_0)] = [\phi_3(w_0)]$, then the proof of Lemma~\ref{s1 = s3} shows that $\phi_1(w_0)$ is alternating.
Therefore $\phi_1(w_0) = w_0$ by Lemma~\ref{alternating fixed}, contradicting our assumption that $w_0$ has no loops.

If $[\phi_1(w_0)] = [\phi_4(w_0)]$, then $[\phi_2(w_0)] = [\phi_3(w_0)]$ by Lemma~\ref{s1 = s4}.
The vertices of $\Gamma(W)$ are as in the case $[\phi_1(w_0)] = [\phi_2(w_0)]$, with analogous edges, so $\Gamma(W)$ is (R6).

The remaining three cases are equivalent under permutations to the first three.

If $[\phi_3(w_0)] = [\phi_4(w_0)]$, then let $\sigma$ be the permutation that maps $x \mapsto y, y \mapsto x$.
Then $[\phi_1 \sigma(w_0)] = [\phi_2 \sigma(w_0)]$, which is the first case we considered, so $\Gamma(W)$ is (R6).

If $[\phi_2(w_0)] = [\phi_4(w_0)]$, letting $\sigma$ map $x \mapsto \overline{x}, y \mapsto y$ gives $[\phi_1 \sigma(w_0)] = [\phi_3 \sigma(w_0)]$, which is the second case and so does not occur when $[w_0]$ has no loops.

If $[\phi_3(w_0)] = [\phi_2(w_0)]$, then $[\phi_1 \sigma(w_0)] = [\phi_4 \sigma(w_0)]$, where $\sigma$ maps $x \mapsto y, y \mapsto x$.
This is the third case, so $\Gamma(W)$ is (R6).
\end{proof}

\section{Enumeration}\label{Enumeration}

Having classified automorphic conjugacy classes of $F_2$ in this paper, it is natural to ask how many automorphic conjugacy classes contain minimal words of length $n$.
In this section we make some observations that suggest the intriguing possibility of an exact enumeration.
We restrict our speculation to non-root classes, which outnumber root classes (at least for $5 \leq n \leq 20$ and probably for $n > 20$ as well).

In Section~\ref{Non-root classes} we mentioned that for $9 \leq n \leq 20$ there are precisely $5$ (P1) classes of size $|V(\Gamma(W))| = n-5$ (the largest possible size, per Theorem~\ref{automorphic conjugacy class size}).
This can be clearly seen in Appendix~\ref{Number of paths of each size} as an eventually constant diagonal of $5$s in the table enumerating (P1) classes.
Our first conjecture is that all diagonals of this table are eventually constant.
The tables enumerating (P2) and (P3) classes, which result from folding, suggest that these classes have size at most $n/2$ for $n \geq 2$, so we phrase the conjecture as follows.

\begin{conjecture*}
Fix $k \geq 0$.
The number of automorphic conjugacy classes of $F_2$ of size $n - k$ whose minimal words have length $n$ is constant for sufficiently large $n$.
\end{conjecture*}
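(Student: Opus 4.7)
The plan is to establish, for each fixed $k \geq 0$ and all sufficiently large $n$, a bijection between automorphic conjugacy classes of size $n-k$ with minimal word length $n$ and a finite combinatorial set that does not depend on $n$. First I would restrict attention to non-root (P1) classes: root classes have $|V(\Gamma(W))| \in \{1,2,3,5\}$ by Theorems~\ref{non-alternating root class} and \ref{alternating root class}, so contribute nothing once $n > k+5$, while the proof of Theorem~\ref{automorphic conjugacy class size} shows that (P2) and (P3) classes have size at most $\lfloor n/2 \rfloor$, so they contribute nothing once $n > 2k+c$ for a small constant $c$.

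Next, for a (P1) class of size $n-k$, I would fix an endpoint representative $w$; by Lemma~\ref{semi-alternating at least 1} there is a choice of $\phi = (\{y\},x)$ with $m_{\overline x}(w)=0$, so that $|V(\Gamma(W))| = m_x(w)+1 = n-k$. Consequently, cyclically $w$ contains both $yy$ (or $\overline y \overline y$) and a subword of the form $y^{\pm 1} x^{n-k-1} y^{\pm 1}$. Removing the maximal $x$-run leaves a ``boundary'' of length bounded by a function of $k$ alone. I would define the \emph{shape} of $w$ to be this boundary, together with its cyclic position relative to the $yy$, taken up to the permutations and inner automorphisms in $J$. A class is then recovered from the pair of shapes at its two endpoints, along with a pairing datum recording how one endpoint maps to the other under $\phi^{n-k-1}$; the set of such decorated pairs is finite and independent of $n$.

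The main obstacle is showing that this parametrization is stable in $n$: for each fixed shape-pair datum, inserting a run $x^{n-k-1}$ should produce a genuine minimal non-root (P1) class for all sufficiently large $n$, and distinct data should yield distinct classes. The minimality test of Theorem~\ref{minimality} depends only on two-letter subword counts, each of which is linear in $n$ with coefficients determined by the shape, so once $n$ exceeds a bound depending only on $k$ the governing inequality is determined by its leading behavior and is therefore stable; the non-root condition is stable for the same reason. Distinctness of classes reduces, via Whitehead's theorem and the classification of Theorem~\ref{non-root class}, to distinctness of shape-pair data modulo $J$. The genuinely hardest step will be verifying that the pairing datum --- the shape of the endpoint word obtained after applying $\phi^{n-k-1}$ to the other endpoint --- is itself independent of $n$. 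I expect this to follow from the rigidity of the iteration $\phi^j$ observed in the proof of Lemma~\ref{semi-alternating at least 1}, since for $n$ large the effect of $\phi^{n-k-1}$ on a bounded boundary pattern is eventually determined by the shape alone, but making this precise while controlling the interaction with the $J$-action is likely to require careful bookkeeping.
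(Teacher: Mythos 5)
First, you should know that the paper does not prove this statement: it appears in Section~\ref{Enumeration} explicitly as a conjecture and is left as an open problem, so there is no proof of record to compare against. Your outline is a reasonable attack --- reduce to (P1) classes, observe that an endpoint word of a size-$(n-k)$ path must consist of a run $x^{n-k-1}$ together with a ``boundary'' of at most $k+1$ letters, and parametrize classes by boundary data --- and the easy parts are sound: root classes drop out by Theorems~\ref{non-alternating root class} and \ref{alternating root class}, and in the minimality criterion of Theorem~\ref{minimality} only $(xx)_u$ grows with $n$, so minimality and non-rootness of $x^{n-k-1}B$ stabilize. But the proposal has genuine gaps beyond ``careful bookkeeping.''

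The most serious is the folding question, which you do not address. For a fixed boundary $B$, the word $u_n = x^{n-k-1}B$ generates a path of $n-k$ words, but whether this is a (P1) path on $n-k$ vertices or folds into a (P2)/(P3) class on roughly $(n-k)/2$ vertices depends on whether the two endpoints are $J$-equivalent, and the far endpoint is $\phi^{-(n-k-1)}(u_n)$, whose $J$-type can a priori depend on $n$. This is not hypothetical: $a^{n-2}bb$ folds to type (P2) for odd $n$ and (P3) for even $n$, so parity-dependent behavior genuinely occurs, and the content of the conjecture (constant, not eventually periodic) lives exactly here. Second, your dismissal of (P2)/(P3) classes rests on the bound $|V(\Gamma(W))| \leq \lfloor n/2 \rfloor$ for those types, which the paper only observes empirically; the proof of Theorem~\ref{automorphic conjugacy class size} verifies it only for the specific words $a^{n-k}bub$ with $k \leq 5$, so the general folding-in-half statement would itself need proof. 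Third, the ``pairing datum'' --- the boundary of $\phi^{-(n-k-1)}(u_n)$ --- is the heart of both realizability and distinctness, and you only state the expectation that it stabilizes; the gap arithmetic of Equation~\eqref{minus 1} (each application of $\phi^{-1}$ shifts every cyclic $y x^i y$ exponent down by one while fixing the mixed gaps $y x^i \overline{y}$) makes stabilization plausible but also imposes global balance conditions on $B$ that you have not formulated. As it stands this is a program, not a proof, and the statement remains open.
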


For $k = 0, 1, 2, \dots$, these constants appear to be
\begin{equation}\label{limiting counts}
	0, 0, 0, 0, 0, 5, 12, 17, 24, 67, 196, 437, \dots.
\end{equation}
A simple expression for the $k$th term of this sequence is not obvious.
However, refining our parameterization of classes reveals additional structure.

Define the \emph{weight} of a word $w$ to be $\min((a)_w, (b)_w)$.
Suppose $\phi = (\{y\}, x)$ is level on a minimal word $w$ of length $n$.
Then $(y)_w = (y)_{\phi(w)}$, and hence $(x)_w = n - (y)_w = n - (y)_{\phi(w)} = (x)_{\phi(w)}$.
Therefore the weight of a minimal word is preserved under level one-letter automorphisms.
The weight is also preserved under inner automorphisms and permutations, so the weight is invariant on all minimal words in an automorphic conjugacy class $W$.

Let us count classes not by size alone but by size and weight.
There is only one class of weight $0$ for each $n \geq 0$, namely the class containing $a^n$, which has size $1$.
There are no classes of weight $1$, since $a^{n-1}b$ is not minimal.

We return to (P1) classes.
For $9 \leq n \leq 20$, the $5$ classes of type (P1) and size $n-5$ all have weight $4$.
Similarly, for $10 \leq n \leq 20$, all $12$ second-largest (P1) classes (those of size $n-6$) have weight $4$.
The $17$ third-largest classes all have weight $4$, and the $24$ fourth-largest classes also all have weight $4$.
However, not all $67$ fifth-largest classes have weight $4$; it turns out that $29$ have weight $4$ and $38$ have weight $6$.
If, instead of Sequence~\eqref{limiting counts}, we consider the number of classes (for sufficiently large $n$) of size $n - k$ whose minimal words have length $n$ and weight $4$, we obtain the sequence
\[
	0, 0, 0, 0, 0, 5, 12, 17, 24, 29, 36, 41, \dots,
\]
whose terms are given by a simple expression.
Namely, this sequence is eventually a linear quasi-polynomial with modulus $2$.

\begin{conjecture*}
For $k \geq 4$ and $n \geq \max(2k-2,9)$, the number of (P1) classes of size $n - k$ whose minimal words have length $n$ and weight $4$ is
\[
	\begin{cases}
		6 k - 24	& \text{if $k \equiv 0 \mod 2$} \\
		6 k - 25	& \text{if $k \equiv 1 \mod 2$}.
	\end{cases}
\]
\end{conjecture*}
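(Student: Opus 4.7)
The plan is to enumerate weight-$4$ (P1) classes via a canonical endpoint representative. After applying a permutation in $J$, one may assume $b$ is the minority letter; a weight-$4$ minimal word $w$ of length $n$ then has exactly four letters from $\{b, \overline{b}\}$ and can be written cyclically as
\[
w \;=\; b^{e_1}\,\alpha_1\, b^{e_2}\,\alpha_2\, b^{e_3}\,\alpha_3\, b^{e_4}\,\alpha_4,
\]
with $e_i \in \{\pm 1\}$ and each $\alpha_i \in \{a, \overline{a}\}^{*}$ (nonempty whenever $e_i \ne e_{i+1}$) satisfying $|\alpha_1| + |\alpha_2| + |\alpha_3| + |\alpha_4| = n - 4$.

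First I would translate the conditions that $w$ be minimal, that $[w]$ be a (P1) endpoint of outdegree $1$, and that the maximal path through $[w]$ have exactly $n - k$ vertices, into explicit arithmetic constraints on the data $(e_i, |\alpha_i|)$. By Lemma~\ref{level automorphism} and Lemma~\ref{level images}, minimality and the outdegree-$1$ condition reduce to linear inequalities together with the requirement $(bb)_w \geq 1$ (some pair of adjacent like-signed $b$-letters), while the proof of Lemma~\ref{semi-alternating at least 1} expresses the path size as $m_a(w) + m_{\overline{a}}(w) + 1$, and both invariants are read off directly from the cyclic signed $a$-distances between like-signed $b$-letters in the tuple.

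Next I would stratify by the cyclic sign pattern of $(e_1, e_2, e_3, e_4)$. Modulo cyclic rotation (from inner automorphisms) and the permutation $b \leftrightarrow \overline{b}$, exactly four patterns occur: $(++++)$, $(+++-)$, $(++--)$, and $(+-+-)$. In each stratum the admissible tuples $(|\alpha_1|, \dots, |\alpha_4|) \in \mathbb{Z}_{\geq 0}^{4}$ of total length $n - 4$ giving path size $n - k$ form a bounded region whose shape is independent of $n$ once $n \geq 2k - 2$ --- this threshold being precisely where the ``long block'' constraint on the $\alpha_i$ stops being binding --- yielding an $n$-independent count per stratum. I would then quotient by the residual symmetry (the subgroup of the order-$16$ group generated by cyclic rotation, $b \leftrightarrow \overline{b}$, and $a \leftrightarrow \overline{a}$ which fixes the chosen pattern) and sum the contributions to recover the sequence $5, 12, 17, 24, 29, 36, 41, \dots$ for $k = 4, 5, 6, \dots$.

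The crux, and the main obstacle, is explaining the parity-dependent discrepancy of $1$ between even and odd $k$. This difference must arise from a single orbit whose stabilizer contains an order-$2$ element fixing a configuration in exactly one parity class. I expect this fixed configuration to be a palindromic endpoint stabilized by a composition of cyclic rotation with $a \leftrightarrow \overline{a}$ and possibly $b \leftrightarrow \overline{b}$; such a symmetry constrains the path to pass through a self-conjugate word in its middle, which forces the path length (and hence $k$) to have a specific parity, contributing $+1$ to the even-$k$ count and $0$ to the odd-$k$ count. Pinning down this fixed-point analysis is the delicate step; once done, the rest is routine but extensive case analysis on the four strata identified above.
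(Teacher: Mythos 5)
You should first be aware that the paper does not prove this statement: it appears in Section~\ref{Enumeration} as a conjecture supported only by the computational data of Appendix~\ref{Number of paths of each size} (for $n \leq 20$), and the authors explicitly leave it, together with its companions, as an open problem. So there is no proof in the paper to compare your proposal against; anything you produce here would be new mathematics.

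As a proof, your proposal has genuine gaps at every load-bearing point, and you candidly flag the largest one yourself. Concretely: (1) the reduction of ``$w$ is minimal, $[w]$ is an outdegree-$1$ endpoint, and the path has exactly $n-k$ vertices'' to explicit arithmetic constraints on the data $(e_i, |\alpha_i|)$ is asserted, not carried out --- in particular the path size is $m_a(w) + m_{\overline{a}}(w) + 1$ only \emph{before} the folding identifications discussed after Lemma~\ref{semi-alternating at least 1}, and a (P1) class is by definition one where no folding occurs, so you must also rule out the symmetries that would turn the path into a (P2) or (P3) class, which is an extra condition on the tuple you never impose; (2) the claim that the admissible region stabilizes for $n \geq 2k-2$ is asserted without identifying which constraint becomes slack at that threshold; (3) every (P1) path of size $\geq 2$ has two endpoints, so counting canonical endpoint representatives double-counts classes except where a symmetry of $J$ exchanges the endpoints --- this bookkeeping is entangled with, but not the same as, your parity analysis, and it is not addressed; and (4) the explanation of the $\pm 1$ parity discrepancy is explicitly conjectural in your own write-up (``I expect\dots'', ``pinning down this fixed-point analysis is the delicate step''). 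Since the discrepancy between $6k-24$ and $6k-25$ is exactly the quantity the statement is about, leaving that step unresolved means the proposal does not establish the claimed formula even in outline. The stratification by cyclic $b$-sign pattern and the orbit-counting framework are a sensible way to attack the problem, but what you have is a research plan, not a proof.
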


It appears that all classes of odd weight have size $1$.
For even weights, however, we see behavior similar to weight-$4$ classes.
For example, fixing $k$, the number of (P1) classes of size $n-k$ and weight $6$ appears to be constant for $n \geq 2k-5$, with values $38, 160, 396, 800$ for $k = 9, \dots, 12$.
These four terms are not enough to guess a reliable expression for the $k$th term, but we suspect it is given by a quasi-polynomial as well.

Therefore it seems that sufficiently large (P1) classes should be amenable to enumeration.
Analogous conjectures for (P2) and (P3) classes aren't quite as strongly suggested by the data available in Appendix~\ref{Number of paths of each size}, but we are still willing to state the following.

\begin{conjecture*}
Fix an odd $k \geq 1$.
The number of (P2) classes of size $(n - k)/2$ whose minimal words have length $n$ is constant for sufficiently large odd $n$.
\end{conjecture*}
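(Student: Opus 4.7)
The plan is to parameterize (P2) classes of size $s = (n-k)/2$ by a bounded combinatorial invariant together with a single length parameter that absorbs all of the $n$-dependence, and then to show that this invariant stabilizes for $n$ large.

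By the proof of Lemma~\ref{semi-alternating at least 1}, every non-root class $W$ containing a semi-alternating minimal word arises by folding the orbit $W' = \{\phi^j(w_0)\}$ under a one-letter automorphism $\phi = (\{y\},x)$, where $w_0$ is the central word. A (P2) class arises precisely when this unfolded orbit has odd length $2s - 1$ and folds about $w_0$ to produce exactly one loop at $[w_0]$, as opposed to zero loops (P1) or two loops (P3). This forces $m_x(w_0) = m_{\overline{x}}(w_0) = s - 1$ together with a permutation $\pi$ satisfying $\pi(w_0) \equiv w_0$ that swaps $x \leftrightarrow \overline{x}$ while fixing $\{y,\overline{y}\}$ appropriately, and no extra symmetries.

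Next I would examine the unfolded endpoint $w^* = \phi^{s-1}(w_0)$. By the proof of Lemma~\ref{semi-alternating at least 1}, $m_x(w^*) = 0$ and $m_{\overline{x}}(w^*) = 2(s-1) = n - k - 2$, so $w^*$ contains a $y\overline{x}^{n-k-2}y$ subword (or its inverse) together with a cyclic \emph{remainder} $r$ of length $k$ which itself contains at least one $yy$ or $\overline{y}\overline{y}$. The pair $(r,\pi)$, taken modulo the natural action of $J$, is then the finite invariant determining $W$; the exponent $n - k - 2$ of the long run encodes the single length parameter that scales with $n$.

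The crux is verifying that for $n$ sufficiently large relative to $k$, every admissible remainder $r$ (subject to a palindromic constraint inherited from the symmetry $\pi$ of $w_0$) extends to a genuine minimal word of length $n$, and that distinct remainders give $\sim$-inequivalent classes. Minimality via Theorem~\ref{minimality} should follow uniformly once the long run dominates the subword counts contributed by $r$. The main obstacle I anticipate is the inequivalence step: one must identify the finite group of symmetries (the eight permutations together with cyclic rotations of $w^*$ that move the long run across the remainder) acting on remainders and rule out sporadic $\sim_J$-coincidences that can occur only because the long run is not yet much longer than $r$. Once a stabilization threshold $n_0(k)$ is established, the count of (P2) classes of size $(n-k)/2$ for odd $n \geq n_0(k)$ equals the number of orbits of admissible remainders under this finite group, which depends only on $k$.
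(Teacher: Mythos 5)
First, a point of orientation: the paper does not prove this statement --- it is one of the conjectures in Section~\ref{Enumeration} that the authors explicitly leave as open problems. So your proposal cannot be measured against a proof in the paper; it has to stand on its own, and as written it does not. It is a strategy with the two hardest steps left as acknowledged gaps. You say ``the crux is verifying that \dots every admissible remainder $r$ \dots extends to a genuine minimal word of length $n$'' and that ``the main obstacle'' is ruling out sporadic $\sim_J$-coincidences between remainders --- but these are precisely the content of the conjecture, and neither is carried out. In addition, admissibility of $r$ is not just minimality of the endpoint $w^*$: you need minimality of \emph{every} word in the unfolded orbit (the interior semi-alternating words as well), you need the symmetry $\pi$ that creates the loop to actually exist for the whole orbit, and you need all of these conditions to become independent of $n$ once $n$ is large. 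None of this is established, so no stabilization threshold $n_0(k)$ is actually produced.

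There is also a concrete structural error in your setup of the folding. Folding an odd-length unfolded orbit ``about $w_0$'' (i.e.\ via a symmetry $\pi(w_0)\equiv w_0$ with $[\phi(w_0)]=[\phi^{-1}(w_0)]$) produces a \emph{double edge} at the folded end --- that is the (P3) mechanism illustrated by class~6.10. A single loop, i.e.\ type (P2), arises when the symmetry identifies two \emph{adjacent} vertices of the unfolded orbit ($\phi(u)\sim_J u$), so the fold lies between two vertices and the unfolded orbit has \emph{even} length $2s=n-k$. Consequently the long run in the endpoint is $x^{\,n-k-1}$ (not $x^{\,n-k-2}$) and the remainder has length $k-1$ (not $k$). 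One can check this against class~7.16: there $n=7$, $s=3$, $k=1$, the unfolded orbit $aaaaabb,\, aaaab\overline{a}b,\, \dots,\, b\overline{a}^{5}b$ has $6=n-k$ words, the endpoint $a^{5}bb$ carries the run $a^{n-2}=a^{n-k-1}$, and the remainder is empty, of length $k-1=0$. The off-by-one is repairable, but it signals that the (P2) and (P3) cases were conflated, and the parity bookkeeping (why (P2) classes of size $(n-k)/2$ require $k$ odd and $n$ odd) depends on getting this right. In short: the parameterization by a long run plus a bounded remainder is a sensible and promising reduction, but the conjecture remains open after your argument.
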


\begin{conjecture*}
Fix an even $k \geq 0$.
The number of (P3) classes of size $(n - k)/2$ whose minimal words have length $n$ is constant for sufficiently large even $n$.
\end{conjecture*}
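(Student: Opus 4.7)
The plan is to reduce the enumeration to counting finitely many ``combinatorial templates'' for a distinguished symmetric central word of each (P3) class, and to show this count stabilizes. Fix an even $k\geq 0$, set $s=(n-k)/2$ and $r=s-1$, and consider (P3) classes of size $s$ whose minimal words have (even) length $n$.

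By Lemma~\ref{semi-alternating at least 1} and the corollary at the end of Section~\ref{Non-root classes}, every non-degenerate (P3) class has a distinguished endpoint vertex $[w_0]$ of outdegree~$2$ (the vertex incident to the double edge), and I would argue that $w_0$ satisfies: (i) $w_0$ is semi-alternating for some $y\in L_2$; (ii) $w_0$ is minimal and non-root; (iii) $\pi(w_0)\equiv w_0$ for an order-$2$ permutation $\pi$ sending either $x\mapsto\overline{x},\,y\mapsto y$ or $x\mapsto x,\,y\mapsto\overline{y}$; and (iv) $m_x(w_0)=m_{\overline{x}}(w_0)=r$. Conversely, any such $w_0$ determines a unique (P3) class of size $r+1$ by Lemma~\ref{semi-alternating at least 1}. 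So the task reduces to counting such $w_0$ of length $n$ modulo $\sim_J$.

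Since $(yy)_{w_0}=0$, the cyclic word $w_0$ alternates $t$ single $y$-letters with $t$ gaps, each a nonempty same-sign run of $x$-letters, with total gap length $n-t$. I would call a gap \emph{long} if both its boundary $y$-letters have the same sign; condition (iv) then forces every long gap to have length $\geq r$, with at least one long $+$-gap and one long $-$-gap attaining the minimum $r$. Writing $\ell$ for the number of long gaps, the inequality $\ell r+(t-\ell)\leq n-t$ together with $r=(n-k-2)/2$ forces $\ell=2$ and $t\leq T(k)$ for all $n$ exceeding a threshold depending on $k$. A \emph{template} would then record $t$, the cyclic $y$-sign pattern, the signs and excess-over-$r$ of the two long gaps, and the (bounded) lengths of the remaining short gaps, all modulo the action of $J$; together with the symmetry (iii) and the identity $n-t=2r+(\text{bounded decoration})$, this leaves only finitely many admissible templates, a number $N(k)$ independent of $n$.

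The final step is to show that for each admissible template and each sufficiently large $n$, the ``padded'' word $w_0(n)$ obtained by filling the long gaps with $x^{r+c_1}$ and $\overline{x}^{r+c_2}$ is a genuine (P3) center satisfying (i)--(iv), and that distinct templates yield distinct $\sim_J$-classes. The hard part will be these verifications: checking minimality and non-rootness uniformly in $n$, and, more subtly, ensuring that the unfolded path $\phi^{-r}(w_0(n)),\ldots,\phi^r(w_0(n))$ has no accidental $\sim_J$ coincidences beyond the one imposed by $\pi$, since any additional coincidence would collapse the (P3) into a shorter or structurally different graph. Both verifications reduce to subword counts via Lemmas~\ref{cyclic word} and~\ref{level automorphism} and should be routine once the template language is in place; the stabilization of the count as $n\to\infty$ then follows automatically, giving the constant $N(k)$.
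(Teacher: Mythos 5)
The first thing to flag is that the paper does not prove this statement: it appears in Section~\ref{Enumeration} as one of several enumeration conjectures that the authors explicitly ``leave as open problems.'' There is therefore no proof in the paper to compare yours against, and the burden of every step falls entirely on you; you cannot discharge the decisive verifications by calling them routine.

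That said, your skeleton is sensible: reducing to the symmetric outdegree-$2$ endpoint $w_0$ (which by Lemma~\ref{level images} and the corollary following Theorem~\ref{automorphic conjugacy class size} is semi-alternating and satisfies $\pi(w_0)\equiv w_0$), and the arithmetic forcing $\ell=2$ and $t\le (k+4)/2$ for large $n$, are both correct. The genuine gaps are exactly the steps you defer. (1) \emph{Exact size control.} Lemma~\ref{semi-alternating at least 1} only shows that $\{\phi^j(w_0)\}$ covers the vertex set; to get $m_x(w_0)=m_{\overline{x}}(w_0)=s-1$ in the forward direction, and to show the padded word generates a (P3) of size exactly $(n-k)/2$ in the converse, you must rule out coincidences $[\phi^i(w_0)]=[\phi^j(w_0)]$ for $0\le i<j\le r$ and rule out a loop at $[w_0]$ (which would give (P2) instead). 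A workable route is to observe that for a semi-alternating, non-alternating word the unordered pair $\{m_x(w),m_{\overline{x}}(w)\}$ is a $\sim_J$-invariant and equals $\{r+j,r-j\}$ on $\phi^j(w_0)$ by Equation~\eqref{minus 1}; but no such argument appears in your writeup. (2) \emph{$n$-independence of the admissibility conditions.} The symmetry $\pi(w_0)\equiv w_0$ involves a rotation whose offset grows with $n$; you need to show it is equivalent to a condition on the template alone. This uses that both long gaps have length exactly $r$ --- note your ``excesses'' $c_1,c_2$ are in fact forced to be $0$ once $\ell=2$, since each of $m_x$ and $m_{\overline{x}}$ must be attained --- and you should also check that minimality, which by Theorem~\ref{minimality} reduces to $(xy)_{w_0}=(x\overline{y})_{w_0}$ when $(yy)_{w_0}=0$, depends only on gap boundaries and not on $r$. (3) \emph{Injectivity}: distinct templates must yield distinct $\sim_J$-classes of endpoints, hence distinct classes. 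Until (1)--(3) are actually carried out, what you have is a plausible plan, and the statement remains, as in the paper, a conjecture.
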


On the other side of the spectrum, counting small classes as opposed to large classes seems promising as well.
Let us consider classes of size $1$, which for $0 \leq n \leq 20$ account for more than half of all classes whose minimal words have length $n$ (nearly $88\%$ for $n = 20$).
For odd weights, the number of size-$1$ classes appears to be given by a polynomial.

\begin{conjecture*}
For $n \geq 7$, the number of non-root classes of size $1$ whose minimal words have length $n$ and weight $3$ is $3 n - 11$.
\end{conjecture*}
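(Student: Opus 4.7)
The plan is to enumerate directly, up to the action of $J$, all non-root size-one automorphic conjugacy classes of minimal words of length $n$ and weight $3$, and to show this count is exactly $3n-11$ for $n \geq 7$. Since weight is a $\sim$-invariant and for $n \geq 7$ a weight-$3$ word has strictly fewer $b^{\pm}$-letters than $a^{\pm}$-letters, I may apply a permutation to assume that every representative $w$ has $(b)_w = 3$ and $(a)_w = n-3$. Such a $w$ is cyclically reduced and has between one and three maximal $b^{\pm}$-blocks interleaved with the same number of nonempty $a^{\pm}$-blocks, so $w$ is completely described by: the cyclic pattern of signs on the $b$-blocks, the signs on the $a$-blocks, and the sequence of positive $a$-block lengths summing to $n-3$. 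The first two pieces of data range over finite sets, giving a finite list of shapes; the third provides a linear-in-$n$ parameter.

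For each shape I would compute the subword counts $(aa)_w$, $(bb)_w$, $(ab)_w$, $(a\overline{b})_w$ as functions of the parameters, use Theorem~\ref{minimality} and the root-word equality to isolate the minimal non-root shapes, and use Lemma~\ref{level automorphism} to determine which of the four principal one-letter automorphisms are level on $w$. For each level $\phi$ I would then check whether $\phi(w) \sim_J w$ by rewriting $\phi(w)$ cyclically (using Lemma~\ref{permutation} to commute any sign permutations past $\phi$ when convenient) and comparing with $w$ up to cyclic rotation and the four sign permutations that fix the convention $(b)_w = 3$, namely the subgroup of $J$ generated by $a \mapsto \overline{a}$ and $b \mapsto \overline{b}$. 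By Corollary~\ref{outdegrees}, the class $[w]$ has size one exactly when every level automorphism yields a loop, so I would retain only those shapes whose parameter ranges give rise to such size-one classes.

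The main obstacle I anticipate is the orbit count under the four sign permutations, which is sensitive to parameter-specific symmetries: shapes with a single $b$-block (for instance, the family represented by $a^{n-3} b^3$) contribute only $O(1)$ classes, whereas shapes with two or three $b$-blocks typically contribute linearly in $n$ once stabilisers are handled correctly. After summing the contributions of all admissible shapes, I would expect to recover exactly $3n-11$. The hypothesis $n \geq 7$ then enters to ensure that the minority letter is uniquely defined by $(b)_w = 3$, that the $a$-block lengths can assume all positive values in the relevant ranges, and that no low-$n$ coincidences perturb the leading linear count.
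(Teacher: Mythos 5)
There is a fundamental mismatch here: the statement you are trying to prove is not a theorem of the paper at all. It appears in Section~\ref{Enumeration} as one of several unproved conjectures, supported only by computational data for $n \leq 20$, and the authors explicitly write that they ``leave these conjectures and their generalizations as open problems.'' So there is no proof in the paper to compare against, and any correct argument you produce would be new mathematics rather than a reconstruction.

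Judged on its own terms, your proposal is a reasonable \emph{strategy} but not a proof. The reduction is sound: weight $3$ and $n \geq 7$ force $(b)_w = 3$ after a permutation, the cyclic word decomposes into at most three signed $b$-blocks separated by signed $a$-blocks whose lengths sum to $n-3$, and size one is (by connectivity of $\Gamma(W)$ and Corollary~\ref{outdegrees}) equivalent to every level principal automorphism producing a loop. But every step that would actually establish the count is deferred: you never list the admissible shapes, never compute which parameter ranges give minimal non-root words via Theorem~\ref{minimality}, never determine for which block-length configurations a level automorphism satisfies $\phi(w) \sim_J w$, and never carry out the orbit/stabiliser count under the four sign permutations --- the very point you identify as ``the main obstacle.'' The conclusion that the contributions ``sum to $3n-11$'' is stated as an expectation, not derived. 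As it stands the proposal is an outline of a finite but substantial casework computation, with the casework missing; until that is executed and the stabiliser subtleties for symmetric configurations are resolved, the conjecture remains open.
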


\begin{conjecture*}
For $n \geq 11$, the number of non-root classes of size $1$ whose minimal words have length $n$ and weight $5$ is
\[
	\frac{1}{6} \left(35 n^3 - 645 n^2 + 3988 n - 8262\right).
\]
\end{conjecture*}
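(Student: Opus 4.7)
The plan is a direct case analysis on the cyclic block structure of a minimal word in a size-$1$ non-root class of weight $5$. Let $w \in C_2$ be such a word, of length $n \geq 11$. Applying a permutation reduces to $(b)_w = 5$ and $(a)_w = n-5$. Write $w$ cyclically as alternating maximal a-blocks (runs of $a$ or $\overline{a}$) and b-blocks (runs of $b$ or $\overline{b}$); if there are $m$ b-blocks then the b-block lengths compose $5$ and the a-block lengths compose $n-5$. The b-pattern (block lengths and signs) is a bounded combinatorial datum independent of $n$, while the a-data are $m$ signed nonzero integers whose absolute values sum to $n-5$.

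The first reduction restricts to $m \in \{1,2,3,4\}$. If $m = 5$, each b-block has length $1$, so $(bb)_w = 0$ and $w$ is semi-alternating in the sense of Section~\ref{Non-root classes} with $y = b$. Since $5$ is odd, the b-signs cannot alternate cyclically between consecutive b-blocks, so some pair of consecutive b-letters share the same sign, giving $1 \leq m_a(w) < \infty$ or $1 \leq m_{\overline{a}}(w) < \infty$; Lemma~\ref{semi-alternating at least 1} then forces $|V(\Gamma(W))| \geq 2$, contradicting the size-$1$ hypothesis. A symmetric pigeonhole using $n \geq 11$ rules out $(aa)_w = 0$, which would force $n - 5 \leq 5$. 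Hence no $\sim_J$-equivalent minimal word is semi-alternating in either letter, and by Theorem~\ref{non-root class} and Lemma~\ref{semi-alternating infinity} the graph $\Gamma(W)$ must be either a degenerate (P1) (no loops) or a degenerate (P2) (one loop).

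For each $m \in \{1,2,3,4\}$ and each b-pattern, I would enumerate signed a-block length vectors $(k_1,\dots,k_m)$ with $\sum_i |k_i| = n-5$ and $|k_i| \geq 1$, and impose the strict minimality inequality of Theorem~\ref{minimality} (which simultaneously enforces minimality and the non-root condition), plus either the (P1) condition that no principal one-letter automorphism is level on $w$ (translated via Lemma~\ref{level automorphism} into arithmetic conditions on the subword counts $(ab)_w, (a\overline{b})_w, (aa)_w, (bb)_w$) or the (P2) condition that exactly one principal automorphism $\phi$ is level and satisfies $\phi(w) \equiv \pi(w)$ for some permutation $\pi$ (which forces a precise cyclic symmetry on the a-vector). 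The signed compositions of $n-5$ into $m$ positive parts number $2^m \binom{n-6}{m-1}$, a polynomial of degree $m-1$, so $m = 4$ supplies the cubic term while $m \leq 3$ contribute lower-degree terms. Summing contributions across b-patterns, dividing by the rotation-plus-permutation action of $J$, and simplifying should produce $\tfrac{1}{6}(35 n^3 - 645 n^2 + 3988 n - 8262)$.

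The main obstacle will be the (P2) analysis within each $m$: the loop condition $\phi(w) \equiv \pi(w)$ imposes a nontrivial identification of the a-vector under a cyclic shift combined with permutation-induced sign flips, and enumerating the surviving symmetric configurations case by case is delicate. A secondary difficulty is $J$-bookkeeping to avoid double counting, since distinct triples $(m, \text{b-pattern}, \text{a-vector})$ can yield $\sim_J$-equivalent words, especially when the b-pattern has internal symmetry. Verifying the precise rational coefficients then reduces to summing the $O(1)$ polynomial contributions arising from the distinct b-patterns and collecting like powers of $n$, a finite but intricate calculation whose tractability depends on the symmetry analysis being airtight.
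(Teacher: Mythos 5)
First, be aware that the paper offers no proof of this statement: it appears in Section~\ref{Enumeration} as one of several enumeration conjectures that the authors explicitly ``leave \dots as open problems.'' So there is no argument in the paper to measure yours against, and your write-up must stand entirely on its own. It does not. What you have produced is a framework, not a proof: the reduction to $m \leq 4$ b-blocks is sound (the $m=5$ exclusion via the odd-cycle sign argument, Lemma~\ref{m nonzero}, and Lemma~\ref{semi-alternating at least 1} is correct, as is ruling out $(aa)_w = 0$ for $n \geq 11$, which together eliminate the degenerate (P3) case), but every step that would actually produce the polynomial $\tfrac{1}{6}(35n^3 - 645n^2 + 3988n - 8262)$ is deferred. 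You never enumerate the b-patterns, never determine which signed a-compositions satisfy the minimality and non-levelness inequalities (these carve the composition space into polytopes whose lattice-point counts, not the raw count $2^m\binom{n-6}{m-1}$, supply the coefficients), never classify the (P2) loop symmetries, and never carry out the Burnside-type division by $J$ with its fixed-point corrections. You acknowledge all of this yourself (``should produce,'' ``the main obstacle will be''). Since the entire content of the conjecture is the exact cubic, a proof that stops before computing any coefficient has not proved anything.

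There is also a logical slip worth fixing before you attempt the computation. You say you will ``impose the strict minimality inequality of Theorem~\ref{minimality} (which simultaneously enforces minimality and the non-root condition).'' Strict inequality $|(ab)_w - (a\overline{b})_w| < \min((aa)_w,(bb)_w)$ is sufficient for minimal-and-non-root but not necessary: a word with $|(ab)_w - (a\overline{b})_w| = (aa)_w < (bb)_w$ is minimal and non-root, yet fails your strict inequality. By Lemma~\ref{level automorphism} such words are exactly those admitting a level principal automorphism, i.e.\ your (P2) candidates, so imposing strictness up front makes your own (P2) case vacuous and would undercount. The correct dichotomy is: (P1) degenerate classes satisfy the strict inequality in the $a$-coordinate and the non-levelness conditions in the $b$-coordinate, while (P2) degenerate classes sit on the boundary $|(ab)_w - (a\overline{b})_w| = \min((aa)_w,(bb)_w)$ with $(aa)_w \neq (bb)_w$ and the unique level automorphism realizing a loop $\phi(w) \equiv \pi(w)$. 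Until that boundary stratum is enumerated separately and correctly, and the $J$-orbit count is done with attention to words fixed by nontrivial elements of $J$, the conjecture remains open.
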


For even weights, the expressions seem to be quasi-polynomials rather than polynomials.

\begin{conjecture*}
For $n \geq 5$, the number of non-root classes of size $1$ whose minimal words have length $n$ and weight $2$ is
\[
	\begin{cases}
		n - 2	& \text{if $n \equiv 0 \mod 2$} \\
		n - 3	& \text{if $n \equiv 1 \mod 2$}.
	\end{cases}
\]
\end{conjecture*}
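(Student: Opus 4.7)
The plan is to parametrize all weight-$2$ minimal words of length $n$ explicitly and then test the size-$1$ condition on each. By applying the permutation $a \leftrightarrow b$ in $J$ we may assume $(b)_w = 2$, in which case $w \in C_2$ decomposes cyclically as $w = a^{\epsilon_1 k_1} b^{\delta_1} a^{\epsilon_2 k_2} b^{\delta_2}$ with $\epsilon_i, \delta_i \in \{\pm 1\}$, $k_i \geq 0$, and $k_1 + k_2 = n - 2$; cyclic reducibility forces $\delta_1 = \delta_2$ whenever some $k_i = 0$. Its $J$-class is an orbit under the group generated by flipping all $\epsilon_i$'s (via $a \mapsto \bar a$), flipping all $\delta_i$'s (via $b \mapsto \bar b$), and swapping the two indexed positions (via cyclic rotation by $k_1 + 1$).

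Applying Theorem~\ref{minimality} and the identity $(xy)_w = (yx)_w$, each of the four cyclic boundary transitions between an $a$-arc and a $b$-letter contributes $+1$ to $(ab)_w$ if the two involved signs agree and to $(a\bar b)_w$ otherwise, so when $k_1, k_2 \geq 1$ (hence $(bb)_w = 0$) minimality requires these contributions to split $2$-$2$. This eliminates the four sign vectors with $\epsilon_1 = \epsilon_2$ and $\delta_1 = \delta_2$ simultaneously and leaves twelve valid tuples in three $J$-orbits, distinguished by whether $\epsilon_1 = \epsilon_2$ and whether $\delta_1 = \delta_2$: call them $O_1$ (same $\epsilon$'s, different $\delta$'s), $O_2$ (different $\epsilon$'s, same $\delta$'s), and $O_3$ (different $\epsilon$'s, different $\delta$'s). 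The degenerate $k_i = 0$ case collapses into the single $J$-class of $w = a^{n-2} b^2$, and all of these are non-root for $n \geq 5$ since $(aa)_w \neq (bb)_w$.

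The key step is the size-$1$ test. By Lemma~\ref{level automorphism} applied to $(b)_w = 2$ words, the only principal automorphisms that can be level on $w$ are $\phi_\pm = (\{b\}, a^{\pm 1})$, so $|V(\Gamma(W))| = 1$ holds iff both $\phi_+(w)$ and $\phi_-(w)$ lie in $[w]$. Direct substitution on a representative $w = a^{k_1} b a^{k_2} \bar b$ of $O_1$ gives $\phi_\pm(w) = w$ after a single cancellation, so both edges are loops and $\Gamma(W)$ is of type (P3) in its degenerate form; an analogous computation on the representative $a^{k_1} b \bar a^{k_2} \bar b$ of $O_3$ also gives $\phi_\pm(w) = w$. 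For the $O_2$ representative $a^{k_1} b \bar a^{k_2} b$ one instead gets $\phi_\pm(w) \equiv a^{k_1 \pm 1} b \bar a^{k_2 \mp 1} b$ (with the boundary cases $k_2 = 1$ or $k_1 = 1$ producing a word in the Case-B orbit), and since $\phi_+$ and $\phi_-$ shift the pair $\{k_1, k_2\}$ in opposite directions they cannot both land in $[w]$; similarly the Case-B word $a^{n-2} b^2$ is level only under $\phi_-$, which maps it to a word in $O_2$, a different $J$-class.

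Hence each of $O_1$ and $O_3$ contributes exactly one size-$1$ class per unordered pair $\{k_1, k_2\}$ with $k_1, k_2 \geq 1$ and $k_1 + k_2 = n-2$, for a total of $2 \lfloor (n-2)/2 \rfloor$ classes, equal to $n-2$ for even $n$ and $n-3$ for odd $n$. The main obstacle is the careful $J$-bookkeeping: verifying that the three orbits $O_1, O_2, O_3$ are indeed distinct under $J$ (the invariants ``same $\epsilon$'s'' and ``same $\delta$'s'' being preserved by the group action), that distinct unordered pairs give distinct $J$-classes (in particular that the swap from cyclic rotation does not identify $(k_1, k_2)$ with a different unordered pair), and that the boundary $O_2$ computation at $k_1 = 1$ or $k_2 = 1$ still rules out self-loops despite crossing into Case B.
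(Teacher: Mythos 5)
The paper does not prove this statement: it appears in Section~\ref{Enumeration} as one of several enumeration conjectures supported only by the computational data in the appendices, and the authors explicitly leave it as an open problem. So there is no ``paper proof'' to compare against; your argument has to stand on its own, and as far as I can check it does. The skeleton is sound: a cyclic word of weight $2$ with $(b)_w=2$ and $n\ge 5$ is forced into the form $a^{\epsilon_1 k_1}b^{\delta_1}a^{\epsilon_2 k_2}b^{\delta_2}$; when $k_1,k_2\ge 1$ Theorem~\ref{minimality} with $(bb)_w=0$ reduces to the $2$--$2$ sign-split (note your ``contributes $+1$ to $(ab)_w$'' is really a contribution to $(ab)_w+(ba)_w=2(ab)_w$, so the split gives $(ab)_w=(a\overline{b})_w=1$; the conclusion is unaffected); Lemma~\ref{level images} with $(aa)_w=n-4>0$ and the non-root inequality then confines the level automorphisms to $(\{b\},a^{\pm1})$, and your explicit computations showing that $O_1$ and $O_3$ representatives are literally fixed while $O_2$ and the $a^{n-2}b^2$ class shift the multiset $\{k_1,k_2\}$ are all correct (and match classes 5.2, 5.3, 6.2, 6.4, 6.5, 6.9, 7.2, 7.4, 7.5, 7.12 in Appendix~\ref{Table of automorphic conjugacy classes}). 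Two points deserve to be made explicit rather than deferred to ``bookkeeping'': first, any element of $J$ identifying two of these words must use a permutation fixing $\{b,\overline{b}\}$ setwise, precisely because $(b)_w=2<n-2=(a)_w$ for $n\ge 5$ (this is where $n\ge 5$ enters and why the invariants ``$\epsilon_1=\epsilon_2$'', ``$\delta_1=\delta_2$'' and the multiset $\{k_1,k_2\}$ separate classes); second, the passage from ``both level images of $w$ lie in $[w]$'' to $|V(\Gamma(W))|=1$ uses the connectivity of $\Gamma(W)$ by level one-letter automorphisms, which the paper derives from Whitehead's theorem. With those spelled out, your argument settles the conjecture, which is more than the paper claims to do.
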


\begin{conjecture*}
For $n \geq 9$, the number of non-root classes of size $1$ whose minimal words have length $n$ and weight $4$ is
\[
	\begin{cases}
		\left(2 n^3 - 36 n^2 + 244 n - 540\right)/6	& \text{if $n \equiv 0 \mod 4$} \\
		\left(2 n^3 - 36 n^2 + 241 n - 537\right)/6	& \text{if $n \equiv 1 \mod 4$} \\
		\left(2 n^3 - 36 n^2 + 244 n - 546\right)/6	& \text{if $n \equiv 2 \mod 4$} \\
		\left(2 n^3 - 36 n^2 + 241 n - 537\right)/6	& \text{if $n \equiv 3 \mod 4$}.
	\end{cases}
\]
\end{conjecture*}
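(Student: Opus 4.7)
The plan is to enumerate non-root size-$1$ classes of weight $4$ and length $n$ by parameterizing a minimal representative of each class and counting $J$-orbits under explicit subword-count constraints. After applying one of the eight permutations, every weight-$4$ minimal word may be normalized to satisfy $(b)_w = 4 \le (a)_w = n-4$, and then its cyclically reduced form is
\[
  w = a^{e_1}\, b^{s_1}\, a^{e_2}\, b^{s_2}\, a^{e_3}\, b^{s_3}\, a^{e_4}\, b^{s_4},
\]
with $s_i \in \{\pm 1\}$, $e_i \in \mathbb{Z}$, $\sum_i |e_i| = n-4$, and $e_i \neq 0$ whenever $s_{i-1} \neq s_i$ (indices mod $4$). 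The group acting on such tuples is generated by the cyclic shift of the index $i$ (from inner automorphisms on the cyclic word), together with the stabilizer of $\{b,\overline{b}\}$ inside the permutation group, namely the involutions $s_i \mapsto -s_i$ (the permutation $b \leftrightarrow \overline{b}$) and $e_i \mapsto -e_i$ (the permutation $a \leftrightarrow \overline{a}$).

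Next I translate the relevant conditions into explicit constraints on the tuple. Counting interfaces gives $(aa)_w = \sum_i \max(|e_i|-1, 0)$ and $(bb)_w = \#\{i : s_{i-1} = s_i,\ e_i = 0\}$, with analogous elementary formulas for $(ab)_w$ and $(a\overline{b})_w$ in terms of the joint $(e_i, s_i)$ sign pattern. Theorem~\ref{minimality} and the strict form of the root definition then encode minimality and non-rootness as inequalities in these counts. By Theorem~\ref{non-root class} the size-$1$ condition splits into three subcases P1-degenerate, P2-degenerate, and P3-degenerate, distinguished by the number of loops on the single vertex of $\Gamma(W)$; Lemma~\ref{level images}, Lemma~\ref{m nonzero}, and Lemma~\ref{semi-alternating infinity} convert each into a tuple condition. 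For instance, the P3-degenerate case reduces to $(s_1,s_2,s_3,s_4)$ being alternating, which makes $w$ semi-alternating and forces $m_x(w) = m_{\overline{x}}(w) = \infty$ for $x=a$, $y=b$.

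With all conditions explicit on a finite set of sign patterns, the count for each admissible pattern is the number of tuples $(e_1,\dots,e_4)$ with $\sum_i |e_i| = n-4$ satisfying its sign constraints; this is a polynomial in $n$ of degree at most $3$. Summing over patterns and dividing by the residual group via Burnside's lemma yields a quasi-polynomial in $n$: the order-$4$ cyclic rotation contributes fixed-point terms depending on $(n-4) \bmod 4$, which produces the modulus-$4$ behavior. The weight-$4$ size-$1$ root classes, which by Theorem~\ref{divisibility by four} exist only when $4 \mid n$, are subtracted as a final correction, further refining the dependence on $n \bmod 4$; the two odd residues then collapse to a single cubic because no sign pattern distinguishes $n \equiv 1$ from $n \equiv 3 \pmod 4$.

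The main obstacle is the size-$1$ case analysis beyond the purely local P1 condition (every principal one-letter automorphism strictly increases $|w|$). Both P2 and P3 require detecting when an explicit image $\phi(w)$ equals $w$ modulo an inner automorphism and a permutation, which depends on non-local symmetries of the cyclic tuple $(e_i, s_i)$ under the residual group. The Burnside bookkeeping likewise demands exact enumeration of tuples fixed by each non-trivial group element, and the interplay of those fixed-point conditions with minimality and non-rootness is what ultimately pins down the particular coefficients appearing in the conjectured quasi-polynomial.
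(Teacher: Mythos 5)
First, a point of reference: the paper does not prove this statement. It appears in Section~\ref{Enumeration} as one of several enumeration conjectures that the authors explicitly ``leave \dots as open problems,'' so there is no proof in the paper to compare yours against, and your argument must stand entirely on its own. As written, it does not: it is a plan for a proof rather than a proof. The setup is reasonable --- normalizing so that $b$ is the minority letter, encoding a weight-$4$ cyclic word as a tuple $(e_i,s_i)_{i=1}^{4}$, expressing $(aa)_w$, $(bb)_w$, $(ab)_w$, $(a\overline{b})_w$ in terms of that tuple, splitting the size-$1$ condition into the degenerate (P1), (P2), (P3) cases of Theorem~\ref{non-root class}, and counting $J$-orbits by Burnside over the residual group of index shifts and sign flips. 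It is plausible that this machine outputs a modulus-$4$ quasi-polynomial of degree $3$. But you never run the machine: there is no list of admissible sign patterns, no computation of the fixed-point counts for the nontrivial group elements, no verification of the boundary behavior near $n=9$ where the inequality $|(ab)_w-(a\overline{b})_w|\leq (aa)_w$ is not yet automatic, and, decisively, no derivation of the four cubics $\left(2n^3-36n^2+244n-540\right)/6$, etc., that the statement asserts.

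Moreover, the two steps you defer are exactly the ones that carry the content. For the degenerate (P2) case one must characterize precisely which tuples admit a permutation $\pi$ and rotation with $\pi\phi(w)\equiv w$ for the unique level $\phi$ (the phenomenon of class~9.43); this is a nonlocal symmetry condition on $(e_i,s_i)$ that you describe as an ``obstacle'' and then do not resolve. Likewise you assert, but do not prove, that the Burnside corrections vanish for odd $n$ and collapse the residues $1$ and $3$ modulo $4$, which is what the conjectured formula requires. An argument that the answer should be \emph{some} quasi-polynomial of the right shape does not establish an explicit enumeration formula; to prove the conjecture you must actually produce the coefficients and show they agree with the stated ones. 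Until the case analysis and the fixed-point bookkeeping are carried out in full, the statement remains what the paper says it is: a conjecture.
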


We leave these conjectures and their generalizations as open problems.
The referee has pointed out that, aside from independent interest, knowing the number of automorphic conjugacy classes of a given size would allow one to compute the expected size $|V(\Gamma(W))|$ of a random class $W$ whose minimal words have length $n$.
There are sufficiently many classes of size $1$ that for each $0 \leq n \leq 20$ this number lies in the interval $[1, 1.76)$, with the value for $n = 20$ being approximately $1.18$.
Does the expected size of a random class lie in the interval $[1, 2)$ for all $n \geq 0$?
Does the expected size of a random class tend to $1$ as $n$ gets large?

\section*{Acknowledgement}

We thank the referee for several good suggestions.

\newpage

\appendix

\section{Table of automorphic conjugacy classes}\label{Table of automorphic conjugacy classes}

The following tables list all automorphic conjugacy classes containing a word of length $n \leq 9$.
For a given length, classes are sorted first by size and then by the lexicographically least word.
Representatives modulo $J$ of minimal words in each class are given, and each class is identified by its graph type in Theorems~\ref{non-root class}, \ref{non-alternating root class}, and \ref{alternating root class}.
Data files listing all automorphic conjugacy classes containing a word of length $n \leq 20$ can be downloaded from the second author's web site\footnote{\url{http://thales.math.uqam.ca/~rowland/data/automorphic_conjugacy_classes.html} as of this writing.}.

\

\[
\begin{array}{|rl|rl|rl|} \cline{1-2}\cline{3-4}\cline{5-6}
\text{0.1 (R4)} & \epsilon & \text{7.7 (P1)} & aaab\overline{a}bb & \text{8.20 (P1)} & aaabb\overline{a}\overline{a}b \\\cline{1-2}\cline{3-4}\cline{5-6}
\text{1.1 (R1)} & a & \text{7.8 (P1)} & aaab\overline{a}\overline{b}\overline{b} & \text{8.21 (P1)} & aaabb\overline{a}\overline{a}\overline{b} \\\cline{1-2}\cline{3-4}\cline{5-6}
\text{2.1 (P3)} & aa & \text{7.9 (P1)} & aaabba\overline{b} & \text{8.22 (P1)} & aaabba\overline{b}\overline{b} \\\cline{1-2}\cline{3-4}\cline{5-6}
\text{3.1 (P3)} & aaa & \text{7.10 (P1)} & aaabb\overline{a}b & \text{8.23 (P1)} & aaabb\overline{a}bb \\\cline{1-2}\cline{3-4}\cline{5-6}
\text{4.1 (P3)} & aaaa & \text{7.11 (P1)} & aaabb\overline{a}\overline{b} & \text{8.24 (P1)} & aaabb\overline{a}\overline{b}\overline{b} \\\cline{1-2}\cline{3-4}\cline{5-6}
\text{4.2 (R4)} & ab\overline{a}\overline{b} & \text{7.12 (P3)} & aaab\overline{a}\overline{a}\overline{b} & \text{8.25 (P1)} & aaabbba\overline{b} \\\cline{1-2}\cline{3-4}\cline{5-6}
\text{4.3 (R5)} & aabb & \text{7.13 (P1)} & aabaa\overline{b}\overline{b} & \text{8.26 (P1)} & aaabbb\overline{a}\overline{b} \\\cline{3-4}\cline{5-6}
		& aba\overline{b} & \text{7.14 (P1)} & aabb\overline{a}\overline{a}\overline{b} & \text{8.27 (P3)} & aaab\overline{a}\overline{a}\overline{a}\overline{b} \\\cline{1-2}\cline{3-4}\cline{5-6}
\text{5.1 (P3)} & aaaaa & \text{7.15 (P1)} & aabba\overline{b}\overline{b} & \text{8.28 (P1)} & aabbaa\overline{b}\overline{b} \\\cline{1-2}\cline{3-4}\cline{5-6}
\text{5.2 (P3)} & aaba\overline{b} & \text{7.16 (P2)} & aaaaabb & \text{8.29 (P1)} & aabb\overline{a}\overline{a}\overline{b}\overline{b} \\\cline{1-2}\cline{5-6}
\text{5.3 (P3)} & aab\overline{a}\overline{b} & 		& aaaab\overline{a}b & \text{8.30 (R1)} & aabb\overline{a}\overline{b}\overline{a}\overline{b} \\\cline{1-2}\cline{5-6}
\text{5.4 (P2)} & aaabb & 		& aaab\overline{a}\overline{a}b & \text{8.31 (R4)} & ab\overline{a}\overline{b}ab\overline{a}\overline{b} \\\cline{3-4}\cline{5-6}
		& aab\overline{a}b & \text{8.1 (P3)} & aaaaaaaa & \text{8.32 (R2)} & aabab\overline{a}bb \\\cline{1-2}\cline{3-4}
\text{6.1 (P3)} & aaaaaa & \text{8.2 (P3)} & aaaaaba\overline{b} & 		& aab\overline{a}bba\overline{b} \\\cline{1-2}\cline{3-4}\cline{5-6}
\text{6.2 (P3)} & aaaba\overline{b} & \text{8.3 (P1)} & aaaaabbb & \text{8.33 (R2)} & aaba\overline{b}a\overline{b}\overline{b} \\\cline{1-2}\cline{3-4}
\text{6.3 (P1)} & aaabbb & \text{8.4 (P3)} & aaaaab\overline{a}\overline{b} & 		& aaba\overline{b}\overline{b}\overline{a}\overline{b} \\\cline{1-2}\cline{3-4}\cline{5-6}
\text{6.4 (P3)} & aaab\overline{a}\overline{b} & \text{8.5 (P3)} & aaaabaa\overline{b} & \text{8.34 (R5)} & aabbaabb \\\cline{1-2}\cline{3-4}
\text{6.5 (P3)} & aabaa\overline{b} & \text{8.6 (P1)} & aaaaba\overline{b}\overline{b} & 		& aba\overline{b}aba\overline{b} \\\cline{1-2}\cline{3-4}\cline{5-6}
\text{6.6 (P1)} & aaba\overline{b}\overline{b} & \text{8.7 (P1)} & aaaab\overline{a}bb & \text{8.35 (R5)} & aabba\overline{b}\overline{a}b \\\cline{1-2}\cline{3-4}
\text{6.7 (P1)} & aabba\overline{b} & \text{8.8 (P1)} & aaaab\overline{a}\overline{b}\overline{b} & 		& aba\overline{b}ab\overline{a}\overline{b} \\\cline{1-2}\cline{3-4}\cline{5-6}
\text{6.8 (P1)} & aabb\overline{a}\overline{b} & \text{8.9 (P1)} & aaaabba\overline{b} & \text{8.36 (R5)} & aab\overline{a}\overline{b}\overline{b}\overline{a}b \\\cline{1-2}\cline{3-4}
\text{6.9 (P3)} & aab\overline{a}\overline{a}\overline{b} & \text{8.10 (P1)} & aaaabb\overline{a}b & 		& aba\overline{b}\overline{a}b\overline{a}\overline{b} \\\cline{1-2}\cline{3-4}\cline{5-6}
\text{6.10 (P3)} & aaaabb & \text{8.11 (P1)} & aaaabb\overline{a}\overline{b} & \text{8.37 (R3)} & aaababbb \\\cline{3-4}
		& aaab\overline{a}b & \text{8.12 (P1)} & aaaabbbb & 		& aababa\overline{b}\overline{b} \\\cline{3-4}
		& aab\overline{a}\overline{a}b & \text{8.13 (P3)} & aaaab\overline{a}\overline{a}\overline{b} & 		& aabba\overline{b}a\overline{b} \\\cline{1-2}\cline{3-4}\cline{5-6}
\text{7.1 (P3)} & aaaaaaa & \text{8.14 (P3)} & aaabaaa\overline{b} & \text{8.38 (R6)} & aaabbabb \\\cline{1-2}\cline{3-4}
\text{7.2 (P3)} & aaaaba\overline{b} & \text{8.15 (P1)} & aaabaa\overline{b}\overline{b} & 		& aaba\overline{b}\overline{b}ab \\\cline{1-2}\cline{3-4}
\text{7.3 (P1)} & aaaabbb & \text{8.16 (P1)} & aaab\overline{a}\overline{a}bb & 		& ababa\overline{b}a\overline{b} \\\cline{1-2}\cline{3-4}\cline{5-6}
\text{7.4 (P3)} & aaaab\overline{a}\overline{b} & \text{8.17 (P1)} & aaab\overline{a}\overline{a}\overline{b}\overline{b} & \text{8.39 (R3)} & aababba\overline{b} \\\cline{1-2}\cline{3-4}
\text{7.5 (P3)} & aaabaa\overline{b} & \text{8.18 (P1)} & aaaba\overline{b}\overline{b}\overline{b} & 		& aababb\overline{a}b \\\cline{1-2}\cline{3-4}
\text{7.6 (P1)} & aaaba\overline{b}\overline{b} & \text{8.19 (P1)} & aaabbaa\overline{b} & 		& aaba\overline{b}\overline{a}\overline{b}\overline{b} \\\hline
\end{array}
\]

\newpage

\[
\begin{array}{|rl|rl|rl|} \cline{1-2}\cline{3-4}\cline{5-6}
\text{8.40 (R3)} & aababb\overline{a}\overline{b} & \text{9.17 (P1)} & aaaab\overline{a}\overline{a}\overline{b}\overline{b} & \text{9.49 (P3)} & aab\overline{a}\overline{b}aba\overline{b} \\\cline{3-4}\cline{5-6}
		& aabab\overline{a}\overline{b}\overline{b} & \text{9.18 (P1)} & aaaaba\overline{b}\overline{b}\overline{b} & \text{9.50 (P3)} & aab\overline{a}\overline{b}ab\overline{a}\overline{b} \\\cline{3-4}\cline{5-6}
		& aabb\overline{a}\overline{b}a\overline{b} & \text{9.19 (P1)} & aaaab\overline{a}bbb & \text{9.51 (P3)} & aab\overline{a}\overline{b}\overline{a}ba\overline{b} \\\cline{1-2}\cline{3-4}\cline{5-6}
\text{8.41 (P3)} & aaaaaabb & \text{9.20 (P1)} & aaaab\overline{a}\overline{b}\overline{b}\overline{b} & \text{9.52 (P3)} & aab\overline{a}\overline{b}\overline{a}b\overline{a}\overline{b} \\\cline{3-4}\cline{5-6}
		& aaaaab\overline{a}b & \text{9.21 (P1)} & aaaabbaa\overline{b} & \text{9.53 (P1)} & aaaababbb \\\cline{3-4}
		& aaaab\overline{a}\overline{a}b & \text{9.22 (P1)} & aaaabb\overline{a}\overline{a}b & 		& aaabb\overline{a}b\overline{a}b \\\cline{3-4}\cline{5-6}
		& aaab\overline{a}\overline{a}\overline{a}b & \text{9.23 (P1)} & aaaabb\overline{a}\overline{a}\overline{b} & \text{9.54 (P1)} & aaaabbabb \\\cline{1-2}\cline{3-4}
\text{8.42 (R7)} & aaba\overline{b}abb & \text{9.24 (P1)} & aaaabba\overline{b}\overline{b} & 		& aaab\overline{a}bb\overline{a}b \\\cline{3-4}\cline{5-6}
		& aabba\overline{b}ab & \text{9.25 (P1)} & aaaabb\overline{a}bb & \text{9.55 (P1)} & aaaabbbab \\\cline{3-4}
		& aabba\overline{b}\overline{a}\overline{b} & \text{9.26 (P1)} & aaaabb\overline{a}\overline{b}\overline{b} & 		& aaab\overline{a}b\overline{a}bb \\\cline{3-4}\cline{5-6}
		& aabb\overline{a}\overline{b}\overline{a}b & \text{9.27 (P1)} & aaaabbba\overline{b} & \text{9.56 (P1)} & aaababa\overline{b}\overline{b} \\\cline{3-4}
		& abab\overline{a}ba\overline{b} & \text{9.28 (P1)} & aaaabbb\overline{a}b & 		& aaabba\overline{b}a\overline{b} \\\cline{1-2}\cline{3-4}\cline{5-6}
\text{8.43 (R7)} & aaba\overline{b}\overline{a}bb & \text{9.29 (P1)} & aaaabbb\overline{a}\overline{b} & \text{9.57 (P1)} & aaababba\overline{b} \\\cline{3-4}
		& aaba\overline{b}\overline{b}\overline{a}b & \text{9.30 (P3)} & aaaab\overline{a}\overline{a}\overline{a}\overline{b} & 		& aaabb\overline{a}ba\overline{b} \\\cline{3-4}\cline{5-6}
		& aabb\overline{a}\overline{b}ab & \text{9.31 (P1)} & aaabaaa\overline{b}\overline{b} & \text{9.58 (P1)} & aaababb\overline{a}b \\\cline{3-4}
		& aab\overline{a}bb\overline{a}\overline{b} & \text{9.32 (P1)} & aaabaa\overline{b}\overline{b}\overline{b} & 		& aababba\overline{b}\overline{b} \\\cline{3-4}\cline{5-6}
		& abab\overline{a}b\overline{a}\overline{b} & \text{9.33 (P1)} & aaabb\overline{a}\overline{a}\overline{a}b & \text{9.59 (P1)} & aaababb\overline{a}\overline{b} \\\cline{1-2}\cline{3-4}
\text{9.1 (P3)} & aaaaaaaaa & \text{9.34 (P1)} & aaabb\overline{a}\overline{a}\overline{a}\overline{b} & 		& aaabb\overline{a}b\overline{a}\overline{b} \\\cline{1-2}\cline{3-4}\cline{5-6}
\text{9.2 (P3)} & aaaaaaba\overline{b} & \text{9.35 (P1)} & aaabbaa\overline{b}\overline{b} & \text{9.60 (P1)} & aaabab\overline{a}bb \\\cline{1-2}\cline{3-4}
\text{9.3 (P1)} & aaaaaabbb & \text{9.36 (P1)} & aaabb\overline{a}\overline{a}bb & 		& aabab\overline{a}\overline{a}bb \\\cline{1-2}\cline{3-4}\cline{5-6}
\text{9.4 (P3)} & aaaaaab\overline{a}\overline{b} & \text{9.37 (P1)} & aaabb\overline{a}\overline{a}\overline{b}\overline{b} & \text{9.61 (P1)} & aaabab\overline{a}\overline{b}\overline{b} \\\cline{1-2}\cline{3-4}
\text{9.5 (P3)} & aaaaabaa\overline{b} & \text{9.38 (P1)} & aaabba\overline{b}\overline{b}\overline{b} & 		& aaabb\overline{a}\overline{b}a\overline{b} \\\cline{1-2}\cline{3-4}\cline{5-6}
\text{9.6 (P1)} & aaaaaba\overline{b}\overline{b} & \text{9.39 (P1)} & aaabbbaa\overline{b} & \text{9.62 (P1)} & aaaba\overline{b}a\overline{b}\overline{b} \\\cline{1-2}\cline{3-4}
\text{9.7 (P1)} & aaaaab\overline{a}bb & \text{9.40 (P1)} & aaabbb\overline{a}\overline{a}\overline{b} & 		& aaaba\overline{b}\overline{b}\overline{a}\overline{b} \\\cline{1-2}\cline{3-4}\cline{5-6}
\text{9.8 (P1)} & aaaaab\overline{a}\overline{b}\overline{b} & \text{9.41 (P1)} & aaabbba\overline{b}\overline{b} & \text{9.63 (P1)} & aaaba\overline{b}\overline{a}\overline{b}\overline{b} \\\cline{1-2}\cline{3-4}
\text{9.9 (P1)} & aaaaabba\overline{b} & \text{9.42 (P1)} & aaabbb\overline{a}\overline{b}\overline{b} & 		& aaaba\overline{b}\overline{b}a\overline{b} \\\cline{1-2}\cline{3-4}\cline{5-6}
\text{9.10 (P1)} & aaaaabb\overline{a}b & \text{9.43 (P2)} & aababaa\overline{b}\overline{b} & \text{9.64 (P2)} & aaaba\overline{b}\overline{b}ab \\\cline{1-2}\cline{3-4}
\text{9.11 (P1)} & aaaaabb\overline{a}\overline{b} & \text{9.44 (P3)} & aaba\overline{b}aba\overline{b} & 		& aaba\overline{b}a\overline{b}ab \\\cline{1-2}\cline{3-4}\cline{5-6}
\text{9.12 (P1)} & aaaaabbbb & \text{9.45 (P3)} & aaba\overline{b}ab\overline{a}\overline{b} & \text{9.65 (P1)} & aaabbaba\overline{b} \\\cline{1-2}\cline{3-4}
\text{9.13 (P3)} & aaaaab\overline{a}\overline{a}\overline{b} & \text{9.46 (P3)} & aaba\overline{b}\overline{a}ba\overline{b} & 		& aaab\overline{a}bba\overline{b} \\\cline{1-2}\cline{3-4}\cline{5-6}
\text{9.14 (P3)} & aaaabaaa\overline{b} & \text{9.47 (P3)} & aaba\overline{b}\overline{a}b\overline{a}\overline{b} & \text{9.66 (P1)} & aaabbab\overline{a}b \\\cline{1-2}\cline{3-4}
\text{9.15 (P1)} & aaaabaa\overline{b}\overline{b} & \text{9.48 (P2)} & aabbaa\overline{b}\overline{a}\overline{b} & 		& aabba\overline{b}\overline{b}ab \\\cline{1-2}
\text{9.16 (P1)} & aaaab\overline{a}\overline{a}bb &  & & & \\\hline
\end{array}
\]

\newpage

\[
\begin{array}{|rl|rl|rl|} \cline{1-2}\cline{3-4}\cline{5-6}
\text{9.67 (P1)} & aaabbab\overline{a}\overline{b} & \text{9.82 (P1)} & aaabbaabb & \text{9.93 (P1)} & aabbaab\overline{a}\overline{b} \\
		& aaab\overline{a}bb\overline{a}\overline{b} & 		& aabbaab\overline{a}b & 		& aabba\overline{b}\overline{a}\overline{a}b \\\cline{1-2}
\text{9.68 (P1)} & aaabba\overline{b}\overline{a}\overline{b} & 		& aab\overline{a}bab\overline{a}b & 		& aab\overline{a}bab\overline{a}\overline{b} \\\cline{3-4}\cline{5-6}
		& aaab\overline{a}ba\overline{b}\overline{b} & \text{9.83 (P1)} & aaabbabbb & \text{9.94 (P1)} & aabbaa\overline{b}\overline{a}b \\\cline{1-2}
\text{9.69 (P1)} & aaabb\overline{a}bab & 		& aabaabb\overline{a}b & 		& aabba\overline{b}\overline{b}\overline{a}b \\
		& aabba\overline{b}a\overline{b}\overline{b} & 		& aab\overline{a}b\overline{a}bab & 		& aab\overline{a}\overline{b}a\overline{b}\overline{a}\overline{b} \\\cline{1-2}\cline{3-4}\cline{5-6}
\text{9.70 (P1)} & aaabb\overline{a}\overline{b}\overline{a}\overline{b} & \text{9.84 (P1)} & aabaaba\overline{b}\overline{b} & \text{9.95 (P1)} & aabba\overline{b}\overline{a}\overline{a}\overline{b} \\
		& aaab\overline{a}b\overline{a}\overline{b}\overline{b} & 		& aabaa\overline{b}\overline{b}ab & 		& aabb\overline{a}\overline{b}\overline{b}\overline{a}b \\\cline{1-2}
\text{9.71 (P1)} & aaab\overline{a}babb & 		& aababa\overline{b}a\overline{b} & 		& aab\overline{a}ba\overline{b}\overline{a}\overline{b} \\\cline{3-4}\cline{5-6}
		& aaba\overline{b}\overline{b}abb & \text{9.85 (P1)} & aabaabba\overline{b} & \text{9.96 (P2)} & aaaaaaabb \\\cline{1-2}
\text{9.72 (P1)} & aaab\overline{a}bbab & 		& aabab\overline{a}ba\overline{b} & 		& aaaaaab\overline{a}b \\
		& aababb\overline{a}\overline{a}b & 		& aabba\overline{b}\overline{a}\overline{b}\overline{b} & 		& aaaaab\overline{a}\overline{a}b \\\cline{1-2}\cline{3-4}
\text{9.73 (P1)} & aaab\overline{a}\overline{b}a\overline{b}\overline{b} & \text{9.86 (P1)} & aabaabb\overline{a}\overline{b} & 		& aaaab\overline{a}\overline{a}\overline{a}b \\\cline{5-6}
		& aaab\overline{a}\overline{b}\overline{b}\overline{a}\overline{b} & 		& aabab\overline{a}b\overline{a}\overline{b} & \text{9.97 (P1)} & aaaba\overline{b}abb \\\cline{1-2}
\text{9.74 (P1)} & aaab\overline{a}\overline{b}\overline{a}\overline{b}\overline{b} & 		& aaba\overline{b}\overline{a}\overline{a}bb & 		& aaabb\overline{a}\overline{b}\overline{a}b \\\cline{3-4}
		& aaab\overline{a}\overline{b}\overline{b}a\overline{b} & \text{9.87 (P1)} & aabaab\overline{a}\overline{b}\overline{b} & 		& aaba\overline{b}ab\overline{a}b \\\cline{1-2}
\text{9.75 (P2)} & aaab\overline{a}\overline{b}\overline{b}\overline{a}b & 		& aabaa\overline{b}\overline{b}\overline{a}b & 		& aab\overline{a}b\overline{a}\overline{b}\overline{a}b \\\cline{5-6}
		& aab\overline{a}\overline{b}a\overline{b}\overline{a}b & 		& aabab\overline{a}\overline{b}a\overline{b} & \text{9.98 (P1)} & aaaba\overline{b}\overline{a}bb \\\cline{1-2}\cline{3-4}
\text{9.76 (P1)} & aabaa\overline{b}a\overline{b}\overline{b} & \text{9.88 (P1)} & aabaa\overline{b}abb & 		& aaabb\overline{a}\overline{b}ab \\
		& aabaa\overline{b}\overline{b}\overline{a}\overline{b} & 		& aaba\overline{b}\overline{a}\overline{a}\overline{b}\overline{b} & 		& aaba\overline{b}\overline{a}b\overline{a}b \\\cline{1-2}
\text{9.77 (P1)} & aabaa\overline{b}\overline{a}\overline{b}\overline{b} & 		& aaba\overline{b}\overline{a}\overline{b}a\overline{b} & 		& aab\overline{a}b\overline{a}\overline{b}ab \\\cline{3-4}\cline{5-6}
		& aabaa\overline{b}\overline{b}a\overline{b} & \text{9.89 (P1)} & aabaa\overline{b}\overline{a}bb & \text{9.99 (P1)} & aaaba\overline{b}\overline{b}\overline{a}b \\\cline{1-2}
\text{9.78 (P1)} & aababb\overline{a}\overline{a}\overline{b} & 		& aabb\overline{a}\overline{a}\overline{b}ab & 		& aaab\overline{a}\overline{b}\overline{b}ab \\
		& aababb\overline{a}\overline{b}\overline{b} & 		& aab\overline{a}\overline{b}\overline{a}\overline{b}a\overline{b} & 		& aaba\overline{b}a\overline{b}\overline{a}b \\\cline{1-2}\cline{3-4}
\text{9.79 (P1)} & aabab\overline{a}\overline{a}\overline{b}\overline{b} & \text{9.90 (P1)} & aaba\overline{b}aabb & 		& aab\overline{a}\overline{b}a\overline{b}ab \\\cline{5-6}
		& aabb\overline{a}\overline{a}\overline{b}a\overline{b} & 		& aabb\overline{a}\overline{b}\overline{a}\overline{a}b & \text{9.100 (P1)} & aaabba\overline{b}ab \\\cline{1-2}
\text{9.80 (P1)} & aaba\overline{b}\overline{b}\overline{a}bb & 		& aab\overline{a}baba\overline{b} & 		& aaab\overline{a}\overline{b}\overline{a}bb \\\cline{3-4}
		& aabb\overline{a}\overline{b}\overline{b}ab & \text{9.91 (P1)} & aaba\overline{b}aa\overline{b}\overline{b} & 		& aab\overline{a}ba\overline{b}ab \\\cline{1-2}
\text{9.81 (P1)} & aaabaabbb & 		& aaba\overline{b}a\overline{b}\overline{a}\overline{b} & 		& aab\overline{a}\overline{b}\overline{a}b\overline{a}b \\\cline{5-6}
		& aabaab\overline{a}bb & 		& aaba\overline{b}\overline{b}\overline{a}\overline{a}\overline{b} & \text{9.101 (P1)} & aaabba\overline{b}\overline{a}b \\\cline{3-4}
		& aabab\overline{a}b\overline{a}b & \text{9.92 (P1)} & aaba\overline{b}\overline{b}\overline{a}\overline{a}b & 		& aaab\overline{a}\overline{b}abb \\
 & & 		& aabb\overline{a}\overline{b}\overline{a}\overline{a}\overline{b} & 		& aab\overline{a}ba\overline{b}\overline{a}b \\
 & & 		& aab\overline{a}b\overline{a}\overline{b}\overline{a}\overline{b} & 		& aab\overline{a}\overline{b}ab\overline{a}b \\\hline
\end{array}
\]

\newpage

\section{Number of automorphic conjugacy classes of each type}\label{Number of automorphic conjugacy classes of each type}

This table gives the number of automorphic conjugacy classes whose minimal words have length $n$ for each graph type in Theorems~\ref{non-root class}, \ref{non-alternating root class}, and \ref{alternating root class}.

\

\[
\begin{array}{r|cccccccccc}
n & \text{(P1)} & \text{(P2)} & \text{(P3)} & \text{(R1)} & \text{(R2)} & \text{(R3)} & \text{(R4)} & \text{(R5)} & \text{(R6)} & \text{(R7)} \\ \hline
0 & \gray{0} & \gray{0} & \gray{0} & \gray{0} & \gray{0} & \gray{0} & 1 & \gray{0} & \gray{0} & \gray{0} \\
1 & \gray{0} & \gray{0} & 1 & \gray{0} & \gray{0} & \gray{0} & \gray{0} & \gray{0} & \gray{0} & \gray{0} \\
2 & \gray{0} & \gray{0} & 1 & \gray{0} & \gray{0} & \gray{0} & \gray{0} & \gray{0} & \gray{0} & \gray{0} \\
3 & \gray{0} & \gray{0} & 1 & \gray{0} & \gray{0} & \gray{0} & \gray{0} & \gray{0} & \gray{0} & \gray{0} \\
4 & \gray{0} & \gray{0} & 1 & \gray{0} & \gray{0} & \gray{0} & 1 & 1 & \gray{0} & \gray{0} \\
5 & \gray{0} & 1 & 3 & \gray{0} & \gray{0} & \gray{0} & \gray{0} & \gray{0} & \gray{0} & \gray{0} \\
6 & 4 & \gray{0} & 6 & \gray{0} & \gray{0} & \gray{0} & \gray{0} & \gray{0} & \gray{0} & \gray{0} \\
7 & 10 & 1 & 5 & \gray{0} & \gray{0} & \gray{0} & \gray{0} & \gray{0} & \gray{0} & \gray{0} \\
8 & 22 & \gray{0} & 8 & 1 & 2 & 3 & 1 & 3 & 1 & 2 \\
9 & 81 & 5 & 15 & \gray{0} & \gray{0} & \gray{0} & \gray{0} & \gray{0} & \gray{0} & \gray{0} \\
10 & 298 & 4 & 38 & \gray{0} & \gray{0} & \gray{0} & \gray{0} & \gray{0} & \gray{0} & \gray{0} \\
11 & 855 & 7 & 49 & \gray{0} & \gray{0} & \gray{0} & \gray{0} & \gray{0} & \gray{0} & \gray{0} \\
12 & 2140 & 4 & 96 & 4 & 12 & 244 & 1 & 7 & 5 & 31 \\
13 & 7040 & 29 & 155 & \gray{0} & \gray{0} & \gray{0} & \gray{0} & \gray{0} & \gray{0} & \gray{0} \\
14 & 22244 & 30 & 342 & \gray{0} & \gray{0} & \gray{0} & \gray{0} & \gray{0} & \gray{0} & \gray{0} \\
15 & 64774 & 49 & 553 & \gray{0} & \gray{0} & \gray{0} & \gray{0} & \gray{0} & \gray{0} & \gray{0} \\
16 & 175209 & 46 & 1104 & 11 & 70 & 10899 & 1 & 19 & 15 & 380 \\
17 & 543631 & 185 & 1927 & \gray{0} & \gray{0} & \gray{0} & \gray{0} & \gray{0} & \gray{0} & \gray{0} \\
18 & 1649842 & 232 & 3892 & \gray{0} & \gray{0} & \gray{0} & \gray{0} & \gray{0} & \gray{0} & \gray{0} \\
19 & 4824825 & 343 & 6889 & \gray{0} & \gray{0} & \gray{0} & \gray{0} & \gray{0} & \gray{0} & \gray{0} \\
20 & 13535352 & 406 & 13592 & 35 & 400 & 473355 & 1 & 55 & 51 & 4547
\end{array}
\]

\newpage

\section{Number of paths of each size}\label{Number of paths of each size}

The following table gives the number of (P1) classes $W$ whose minimal words have length $n$ and whose graph $\Gamma(W)$ has $m$ vertices.
Zeros are omitted.

\tiny
\[
\begin{array}{r|ccccccccccccccc}
n & m = 1 & 2 & 3 & 4 & 5 & 6 & 7 & 8 & 9 & 10 & 11 & 12 & 13 & 14 & 15 \\ \hline
0 \\
1 \\
2 \\
3 \\
4 \\
5 \\
6 & 4 \\
7 & 10 \\
8 & 22 \\
9 & 35 & 26 & 15 & 5 \\
10 & 224 & 35 & 22 & 12 & 5 \\
11 & 741 & 44 & 33 & 20 & 12 & 5 \\
12 & 1984 & 53 & 40 & 29 & 17 & 12 & 5 \\
13 & 4538 & 1964 & 401 & 76 & 27 & 17 & 12 & 5 \\
14 & 17064 & 3762 & 1052 & 236 & 72 & 24 & 17 & 12 & 5 \\
15 & 55096 & 6433 & 2279 & 633 & 205 & 70 & 24 & 17 & 12 & 5 \\
16 & 158613 & 10156 & 4197 & 1440 & 477 & 201 & 67 & 24 & 17 & 12 & 5 \\
17 & 415072 & 110789 & 12916 & 3041 & 1043 & 446 & 199 & 67 & 24 & 17 & 12 & 5 \\
18 & 1353447 & 250705 & 35075 & 6714 & 2250 & 888 & 442 & 196 & 67 & 24 & 17 & 12 & 5 \\
19 & 4197308 & 513440 & 89404 & 16198 & 4995 & 1862 & 857 & 440 & 196 & 67 & 24 & 17 & 12 & 5 \\
20 & 12303132 & 968489 & 204968 & 40097 & 11122 & 4226 & 1707 & 853 & 437 & 196 & 67 & 24 & 17 & 12 & 5
\end{array}
\]
\normalsize

\vspace{1cm}

The following tables give the number of (P2) (left table) and (P3) (right table) classes $W$ whose minimal words have length $n$ and whose graph $\Gamma(W)$ has $m$ vertices.

\tiny
\[
\begin{array}{r|ccccccccc}
n & m = 1 & 2 & 3 & 4 & 5 & 6 & 7 & 8 & 9 \\ \hline
0 \\
1 \\
2 \\
3 \\
4 \\
5 & \gray{0} & 1 \\
6 \\
7 & \gray{0} & \gray{0} & 1 \\
8 \\
9 & 2 & 2 & \gray{0} & 1 \\
10 & 2 & 2 \\
11 & 2 & 2 & 2 & \gray{0} & 1 \\
12 & 2 & 2 \\
13 & 18 & 6 & 2 & 2 & \gray{0} & 1 \\
14 & 22 & 6 & 2 \\
15 & 26 & 12 & 6 & 2 & 2 & \gray{0} & 1 \\
16 & 30 & 14 & 2 \\
17 & 138 & 26 & 10 & 6 & 2 & 2 & \gray{0} & 1 \\
18 & 188 & 36 & 6 & 2 \\
19 & 242 & 58 & 22 & 10 & 6 & 2 & 2 & \gray{0} & 1 \\
20 & 308 & 82 & 14 & 2
\end{array}
\qquad
\begin{array}{r|cccccccccc}
n & m = 1 & 2 & 3 & 4 & 5 & 6 & 7 & 8 & 9 & 10 \\ \hline
0 \\
1 & 1 \\
2 & 1 \\
3 & 1 \\
4 & 1 \\
5 & 3 \\
6 & 5 & \gray{0} & 1 \\
7 & 5 \\
8 & 7 & \gray{0} & \gray{0} & 1 \\
9 & 15 \\
10 & 31 & 4 & 2 & \gray{0} & 1 \\
11 & 49 \\
12 & 85 & 4 & 4 & 2 & \gray{0} & 1 \\
13 & 155 \\
14 & 301 & 28 & 8 & 2 & 2 & \gray{0} & 1 \\
15 & 553 \\
16 & 1031 & 44 & 16 & 8 & 2 & 2 & \gray{0} & 1 \\
17 & 1927 \\
18 & 3659 & 172 & 38 & 12 & 6 & 2 & 2 & \gray{0} & 1 \\
19 & 6889 \\
20 & 13123 & 336 & 82 & 28 & 12 & 6 & 2 & 2 & \gray{0} & 1
\end{array}
\]
\normalsize

\newpage

\end{document}